\pgfplotsset{compat=1.15}
\theoremstyle{plain}
\newtheorem{thrm}{Theorem}[section]
\newtheorem{cor}[thrm]{Corollary}
\newtheorem{prop}[thrm]{Proposition}
\newtheorem{lem}[thrm]{Lemma}
\newtheorem{probl}[thrm]{Problem}
\theoremstyle{definition}
\newtheorem{defn}[thrm]{Definition}
\newtheorem{rem}[thrm]{Remark}
\crefname{thrm}{Theorem}{Theorems}
\crefname{theorem}{Theorem}{Theorems}
\crefname{lem}{Lemma}{Lemmas}
\crefname{cor}{Corollary}{Corollaries}
\crefname{prop}{Proposition}{Propositions}
\crefname{defn}{Definition}{Definitions}
\crefname{exm}{Example}{Examples}
\crefname{rem}{Remark}{Remarks}
\crefname{conj}{Conjecture}{Conjectures}
\crefname{quest}{Question}{Questions}
\crefname{section}{Section}{Sections}
\crefname{equation}{\unskip}{\unskip}
\crefname{enumi}{\unskip}{\unskip}
\crefname{subsection}{Subsection}{Subsections}
\newcommand{\CC}{\mathbb{C}}
\begin{document}
	\title[Preservers of truncations of triple products]{Maps preserving the truncation of triple products on Cartan factors}	
	
   \author[J. J. Garc\'{e}s]{Jorge J. Garc\'{e}s}
\address[J. J. Garc\'{e}s]{Departamento de Matem{\'a}tica Aplicada a la Ingenie{\'r}a Industrial, ETSIDI, Universidad Polit{\'e}cnica de Madrid, Madrid, Spain.}
\email{j.garces@upm.es}

\author[L. Li]{Lei Li}
\address[L. Li]{School of Mathematical Sciences and LPMC, Nankai University, 300071 Tianjin, China.}
\email{leilee@nankai.edu.cn}

\author[A.M. Peralta]{Antonio M. Peralta}
\address[A.M. Peralta]{Instituto de Matem{\'a}ticas de la Universidad de Granada (IMAG), Departamento de An{\'a}lisis Matem{\'a}tico, Facultad de
	Ciencias, Universidad de Granada, 18071 Granada, Spain.}
\email{aperalta@ugr.es}

\author[S. Su]{Shanshan Su}
\address[S. Su]{School of Mathematics, East China University of Science and Technology, 200237 Shanghai, China. (Current address) Departamento de An{\'a}lisis Matem{\'a}tico, Facultad de
Ciencias, Universidad de Granada, 18071 Granada, Spain.}
\email{lat875rina@gmail.com}

\subjclass[2010]{Primary 47B49 Secondary 46C99, 17C65, 47N50, 47B48, 47C15, 46H40, 81R15}
\keywords{Cartan factor, JB$^*$-triple, truncation, tripotents, order, triple isomorphisms, preservers} 
	
\begin{abstract} Let $\{C_i\}_{i\in \Gamma_1},$ and $\{D_j\}_{j\in \Gamma_2},$ be two families of Cartan factors such that all of them have dimension at least $2$, and consider the atomic JBW$^*$-triples $A=\bigoplus\limits_{i\in \Gamma_1}^{\ell_{\infty}} C_i$ and $B=\bigoplus\limits_{j\in \Gamma_2}^{\ell_{\infty}} D_j$. Let $\Delta :A \to B$ be a {\rm(}non-necessarily linear nor continuous{\rm)} bijection preserving the truncation of triple products in both directions, that is, $$\begin{aligned}
	\boxed{a \mbox{ is a truncation of } \{b,c,b\}} \Leftrightarrow \boxed{\Delta(a)  \mbox{ is a truncation of } \{\Delta(b),\Delta(c),\Delta(b)\}}
 \end{aligned}$$ Assume additionally that the restriction of $\Delta$ to each rank-one Cartan factor in $A$, if any, is a continuous mapping. Then we show that $\Delta$ is an isometric real linear triple isomorphism. We also study some general properties of bijections preserving  the truncation of triple products in both directions between general JB$^*$-triples. 
	\end{abstract}
	
	\maketitle
	
	\tableofcontents
	
\section{Introduction}

Problems on preservers constitute a prolific and stimulating domain, where different disciplines, like algebra, geometry, and functional analysis interplay. Surprisingly, there is a unifying approach which allows to consider several scattered problems under a common optics. Let us fix a general background with two mathematical structures $A$ and $B$ over linear spaces admitting a certain binary  (algebraic) operation $p_{_A} (a,b)$ and $p_{_B} (c,d)$, respectively, and certain partial relations $\sim_{_A}$ and $\sim_{_B}$, respectively. Suppose $\Delta : A\to B$ is a (non-necessarily linear nor continuous) \emph{bijection preserving the relations $\sim_{_A}$ and $\sim_{_B}$ on products}, that is, $$\boxed{a\sim_{_A} p_{_A} (b,c) \hbox{ in } A } \Rightarrow \boxed{\Delta(a)\sim_{_B} p_{_B} (\Delta(b),\Delta(c)) \hbox{ in } B }.$$ When the implication ``$\Rightarrow$'' is replaced by the equivalence ``$\Leftrightarrow$'' we say that $\Delta$ \emph{preserves the relations} $\sim_{_A}$ and $\sim_{_B}$ \emph{on products in both directions}. The problem is to determine appropriate relations $\sim_{_A}$ and $\sim_{_B}$ and products $p_{_A} (\cdot,\cdot)$ and $p_{_B} (\cdot,\cdot)$ to conclude that $\Delta $ is a linear mapping and understand its precise form. \smallskip

For example, by taking $A = B(H)$ and $B= B(K)$, the von Neumann algebras of all bounded linear operators on complex Hilbert spaces $H$ and $K$ with their natural associative products (i.e. $p_{_A} (a,b) = p_{_B} (a,b) = a b$) and the relations $a\sim_{\lambda} b$ defined by $a$ being a $\lambda$-Aluthge transform of $b$ for a fixed $\lambda \in ]0,1[$, we find the problem studied by F. Chabbabi in \cite{Chabb2017}, where he concluded that if $\Phi : B(H) \to B(K)$ is a bijection and dim$(H)\geq 2$, $\Phi$ preserves the relation ``being a $\lambda$-Aluthge transform'' for a fixed $\lambda \in ]0,1[$ on products of operators if, and only if, there exists a unitary operator $u : H\to K$ such that $\Phi (a) = u a u^*$ for all $a\in B(H)$.  When the associative products of $B(H)$ and $B(K)$ are replaced with the Jordan product $p_{_A} (a,b) = p_{_B} (a,b)=a\circ b = \frac12 (a  b +ba )$, F. Chabbabi and M. Mbekhta proved that a bijection  $\Phi : B(H) \to B(K)$ with  dim$(H)\geq 2$, preserves the relation being a $\lambda$-Aluthge transform for a fixed $\lambda \in ]0,1[$ on Jordan products of operators if, and only if, there exists a unitary or an anti-unitary operator $u : H\to K$ such that $\Phi (a) = u a u^*$ for all $a\in B(H)$ (see \cite{ChabbMbekhta2017, ChabbMbekhta2022corrigendum}). The case in which $A$ and $B$ are von Neumann algebras, $A$ admits no abelian direct summand, $\sim_{_A}=\sim_{_B}=\sim_{\lambda}$ is the relation ``being a $\lambda$-Aluthge transform of'' (for a fixed $\lambda\in [0,1]$) and $p_{_A} (a,b) = p_{_B} (a,b)=a\circ b^*$ (respectively $p_{_A} (a,b) = p_{_B} (a,b)=a b^*$) the problem is considered in \cite{EssPe2018}, where it is established that every bijective mapping $\Phi:A\to B,$ preserving $\sim_{\lambda}$ for products of the form $a\circ b^*,$ maps the hermitian part of $A$ onto the hermitian part of $B$ and its restriction $\Phi|_{M_{sa}} : M_{sa}\to N_{sa}$ is a Jordan isomorphism.\smallskip

There is an earlier precedent in the literature which admits an even simpler statement. Suppose $M$ is a von Neumann algebra {\rm(}or more generally an AW$^{*}$-algebra{\rm)} which has no abelian direct summand, and $N$ is a C$^{*}$-algebra. A result by J. Hakeda shows that taking $p(a,b) = a b$ and $a \sim b \Leftrightarrow a = b$  on $M$ and $N,$ then a bijection $\Phi : M\to N$ satisfying $\Phi (x^*) = \Phi (x)^*$ ($\forall x\in M$), and preserving equality for products must be a real linear $^*$-isomorphism (see  \cite{Hakeda1986}). Furthermore J. Hakeda and K. Sait{\^o} proved that under the same hypotheses but replacing the associative product by $p(a,b) = a\circ b$, the bijection $\Phi$ must be a real linear Jordan $^*$-isomorphism \cite{HakedaSaito1986,Hakeda1986Jordan}. \smallskip

Related results do not only restrict to C$^*$-algebras. Y. Friedman and J. Hakeda proved the next outstanding result for JB$^*$-triples (see subsection~\ref{subsec: definitions} for definitions). Let $M$ be a JBW$^*$-triple with no abelian direct summand, and let $N$ be another JBW$^*$-triple. Take $\sim$ as equality on $M$ and $N$ and set $p(a,b) = \{ a, b, a\}$. Then each bijection $\Delta: M\to N$ preserving the relation equality for the product $p(a,b) = \{a,b,a\}$ is additive (see \cite{FriedmanHakeda1988}).\smallskip

In a more recent contribution (see \cite{Jia_Shi_Ji_AnnFunctAnn_2022}), X. Jia, W. Shi, and G. Ji consider a variant of our problem in the case in which $A=B=B(H)$, the relations $\sim_{_A}$ and $\sim_{_B}$ are given by $a\sim_{t} b$ if $a$ is a truncation of $b$ (i.e. $aa^* a = ab^* a$) and the products $p(a,b) = a b $ and $p(a,b) = a b a$. In the first case they are called preservers of truncation of products, while in the second one they are known as preservers of truncation of Jordan products of operators. Assuming that dim$(H)\geq 2,$ they prove that a bijection $\Delta$ on $B(H)$ preserves the truncation of products of operators in both directions if, and only if, there exists a unitary or an anti-unitary operator $u$ on $H$ such that $\Delta(a) = u a u^*$ for any $a \in B(H)$ \cite[Theorem 2.1]{Jia_Shi_Ji_AnnFunctAnn_2022}. Furthermore, by \cite[Theorem 3.1]{Jia_Shi_Ji_AnnFunctAnn_2022}, a bijection $\Delta: B(H)\to B(H)$ preserves the truncation of triple Jordan products of operators in both directions if, and only if, there exist a constant $\lambda\in  \{1,-1\}$ and a unitary or an anti-unitary operator $u$ on $H$ such that one of the following statements holds:\begin{enumerate}[$(a)$]
\item $\Delta(a) = \lambda u a u^*$ for all  $a\in B(H)$;
\item $\Delta(a) = \lambda u a^* u^*$ for all  $a\in B(H)$,
\end{enumerate} consequently, $\Delta$ is a real linear triple automorphism on $B(H)$ for the triple product given by $\{a,b,c\}:= 2^{-1} (a b^* c + c b^*a),$ which is the natural triple product of $B(H)$ when the latter is regarded as a JB$^*$-triple or Cartan factor of type 1. There are other 5 types of Cartan factors distinguishable from the point of view of holomorphic theory and functional analysis, and of course we can consider those JB$^*$-triples obtained as $\ell_{\infty}$-sums of families of Cartan factors (see \cref{subsec: definitions}). \smallskip

In this paper we begin by observing that the relation ``being a truncation of'' makes perfect sense in the general (and widder) setting of JB$^*$-triples. We say that an element $a$ in a JB$^*$-triple $E$ is a truncation of another element $b$ in $E$ ($a\sim_{t} b$ in short) if $\{a,a,a\} = \{a,b,a\}$. This definition agrees with the usual one on $B(H)$. We devote \cref{sec: truncations} to present the general properties of the relation ``being a truncation of'' in general JB$^*$-triples. It should be remarked that when restricted to the lattice of tripotents this relation coincides with the usual partial ordering (see \cref{rem trunc for tripotents}). \smallskip

In this study, we are mainly interested in the study of those (non-necessarily linear nor continuous) bijections $\Delta$ between JB$^*$-triples $E$ and $F$ preserving the relation $\sim_{t}$ on products of the form $p(b,c) = Q(b) (c)=\{b,c,b\}$ in both directions, which are called \emph{preservers of the truncation of triple products} (see \cref{sec: preservers of the trunc of triple products}). Observe that on $B(H)$ our product has the form $p(b,c) = bc^* b$, which differs from the one employed in \cite{Jia_Shi_Ji_AnnFunctAnn_2022}. There is a natural reason for this, the space $B(H,K)$, the Banach space of all bounded linear operators between two complex Hilbert spaces $H$ and $K$, is a JB$^*$-triple for the triple product $\{a,b,c\}:= 2^{-1} (a b^* c + c b^*a),$ however expresions of the form $bcb$ are not computable on $B(H,K)$. We devote \cref{sec: preservers of the trunc of triple products} to study the general properties of bijections preserving the truncation of triple products in both directions between general JB$^*$-triples, JB$^*$-algebras and spin factors.\smallskip

Our main conclusion is presented in \cref{sec: main conclusions}, where we establish that if $A$ and $B$ are atomic JBW$^*$-triples non-containing $1$-dimensional Cartan factors, every {\rm(}non-necessarily linear nor continuous{\rm)} bijection $\Delta :A \to B$ preserving the truncation of triple products in both directions, that is, $$\begin{aligned}
	\boxed{a \mbox{ is a truncation of } \{b,c,b\}} \Leftrightarrow \boxed{\Delta(a)  \mbox{ is a truncation of } \{\Delta(b),\Delta(c),\Delta(b)\}}
 \end{aligned}$$ and satisfying that its restriction to each rank-one Cartan factor in $A$ is continuous must be an isometric real linear triple isomorphism (see \cref{t main thrm preservers of truncations of triple products}). For the sake of brevity, we shall simply comments that the technical arguments rely on proving that every bijection between JB$^*$-triples preserving the truncation of triple products in both directions defines a bijection preserving order and orthogonality in both directions between the lattices of tripotents of both JB$^*$-triples (see \cref{lem pres tripotents and leq}). Thanks to this property we can apply a recent result by F. Friedman and the third author of this note showing that each bijection preserving the partial ordering in both directions and orthogonality in one direction between the lattices of tripotents of two atomic JBW$^*$-triples not containing rank-one Cartan factors, which is additionally continuous at a tripotent in the domain whose projection onto every Cartan factor is non-zero, extends to a real linear triple automorphism between the JBW$^*$-triples \cite[Theorem 6.1]{Fried_Peralta_Ann_Math_Phys_2022}. Another tool developed in this note is an identity principle for mappings between atomic JBW$^*$-triples established in \cref{thm of extension T and Delta}, which seems interesting by itself.\smallskip

We cannot conclude this introduction without noting that the study of maps preserving certain properties related to truncations is currently being intensively explored.  J. Yao and G. Ji proved in \cite{Yao_Ji_JMathResAppl_2022} that an additive and surjective mapping $\Delta: B(H)\to B(H)$ (where dim$(H)\geq2$) preserves the truncation of operators in both directions if, and only if, there exist a nonzero scalar $\alpha\in \mathbb{C}$ and operators $u,v$ on $H$ which are both unitary or anti-unitary such that $\Delta (a)=\alpha u a v,$ for all $a\in B(H)$, or $\Delta(a)= \alpha u a^* v,$ for all $a\in B(H)$. Another interesting contribution by Y. Mao and G. Ji (see \cite{MaoJi2024}) establishes that a bijection $\Delta: B(H)\to B(H)$ (with dim$(H)\geq2$) preserves truncations of operators in both directions if, and only if, there exist a nonzero scalar $\alpha\in \mathbb{C}$ and operators $u,v$ on $H$ which are both unitary or anti-unitary such that $\Delta (a)=\alpha u a v,$ for all $a\in B(H)$, or $\Delta(a)= \alpha u a^* v,$ for all $a\in B(H)$. Our contribution in this note sets the links to study a great variety of problems in the widder setting of Cartan factors and JB$^*$-triples.

\subsection{Definitions and terminology}\label{subsec: definitions} \ \smallskip

As we have commented at the introduction, JB$^*$-triples provide a formal frame to study many Jordan models including C$^*$-algebras, JB$^*$-algebras, Cartan factors and spin factors among others. A \emph{JB$^*$-triple} (see \cite{Kaup_RiemanMap}) is a Banach space whose norm adapts perfectly with an additional algebraic structure given by a continuous triple product $$ \{\cdot, \cdot, \cdot  \}: E \times E \times E \to E : (x, y,z ) \mapsto \{ x,y,z \}$$
for all $x,y,z \in E$, which is linear in the first and third variables, conjugate-linear in the middle one, and, additionally, satisfies the following axioms: 
\begin{enumerate}[$(i)$]
\item\label{Jordan identity} For each $a,b$ in $E$, the operators $L(a,b): E\to E$, $L(a,b)(c) := \{a,b,c\}$ satisfy the identity
$$L(w,v)\{x,y,z \} = \{L(w,v)x,y,z \} - \{x,L(v,w)y,z \}+\{x,y, L(w,v)z\},$$ for all $x,y,z,w,v \in E$; \hfill ($\emph{Jordan identity}$)  
\item For each $x\in E$, the operator $L(x,x)$ is hermitian with non-negative spectrum;
\item $\| \{x,x,x\}  \| = \| x\|^3$ for all $x \in E$.\hfill ($\emph{Gelfand-Naimark axiom}$) 
\end{enumerate}

Some special operators given in terms of the triple product are defined as follows: for $a,b \in E$, $Q(a,b)$ is the conjugate-linear operator on $E$ defined by $Q(a,b)(z) := \{ a,z,b \} (\forall\, z \in E)$. Note that both $L(a,b)$ and $Q(a,b)$ are bounded. For the sake of simplicity, we simply write $Q(a)$ for $Q(a,a)$, and $a^{[3]}$ for $\{a,a,a\}$.\smallskip

A triple homomorphism (respectively, triple isomorphism) between JB$^*$-triples is a linear (respectively, linear and bijective) mapping preserving triple products.\smallskip 

Well known classes of algebras are encompassed in the category of JB$^*$-triples. The best two known examples are C$^*$-algebras and JB$^*$-algebras, both of which are JB$^*$-triples with respect to triple products defined by 
\begin{equation}\label{eq C*-triple product}
\{ a,b,c \} = \frac{1}{2} (ab^*c+ cb^*a)
\end{equation}
and  
\begin{equation}\label{eq JB$^*$-triple product}
\{ a,b,c \} = (a \circ b^*) \circ c -(a \circ c) \circ b^* + (c \circ b^* ) \circ a,
\end{equation} respectively (see \cite[pages 522, 523 and 525]{Kaup_RiemanMap} and \cite[Theorem 3.3]{BraunKaupUpmeier78}).\smallskip

In the setting of JB$^*$-triples, idempotents and projections do not make any sense, the role of these elements is played by tripotents. An element $e$ in a JB$^*$-triple $E$ is called a \emph{tripotent} if $L(e,e) (e) = \{e,e,e\}= e$. Observe that in a C$^*$-algebra $A,$ equipped with the triple product in \eqref{eq C*-triple product}, tripotents correspond to partial isometries.\smallskip

Each tripotent $e$ in $E$ induces a \emph{Peirce decomposition} of $E$ corresponding to the direct sum:
\begin{equation}
   E = E_{0}(e) \oplus E_{1}(e) \oplus E_{2}(e), 
\end{equation}
where each $E_{j}(e) := \{  a\in E: L(e,e)(a) = \frac{j}{2} a \}$ is a subtriple of $E$ called the \emph{Peirce $j$-subspace} ($j = 0,1,2$). Triple products among elements in Peirce subspaces follow certain patterns known as \emph{Peirce rules} or  \emph{Peirce arithmetics}, namely, $\{ E_{i}(e),E_{j}(e),E_{k}(e) \} \subseteq E_{i-j+k}(e)$ for all $i,j,k = \{0,1,2\}$ and 
$$ \{E_{0}(e), E_{2}(e), E \} = \{ E_{2}(e), E_{0}(e),E \}  = 0,$$
where $E_{i-j+k}(e) = \{0\} $ if $i-j+k \neq  \{0,1,2\}$. The Peirce 2-subspace $E_{2}(e)$ is actually a unital JB$^*$-algebra with identity $e$, where the Jordan product and involution operations are given by $a \circ_{e} b := \{a,e,b \}$ and $a ^{*_{e}} := \{e,a,e \}$, respectively (cf. \cite{Horn_MathScand_1987} or \cite[Fact 4.2.14, Proposition 4.2.22, and Corollary 4.2.30]{CabreraPalaciosBook}).\smallskip

Illustrative examples of JB$^*$- and JBW$^*$-triples are given by the so-called \textit{Cartan factors} defined as follows: 
Suppose that $H$ and $K$ are two complex Hilbert spaces. The space $B(H, K)$ of all bounded linear operators between $H$ and $K$ are called \textit{Cartan factors of type $1$}.
In order to describe the next two Cartan factors, let $j: H\to H$ be a conjugation (that is, a conjugate linear isometry of period $2$). The subspaces $\{a\in B(H): a=-ja^*j\}$ and $\{a\in B(H): a=ja^*j\}$ of $B(H)$ are called \textit{Cartan factors of type $2$} and $3$, respectively. 
A Banach space $X$ is called a \textit{Cartan factor of type $4$} or \textit{spin factor} if $X$ admits a complete inner product $(\cdot|\cdot)$ and a conjugation $x\mapsto \bar{x}$, for which the norm of $X$ is given by
\[\|x\|^2=(x|x)+\sqrt{(x|x)^2-|(x|\bar{x})|^2}\] and the triple product of $X$ is defined by $$\{x, y, z\} = \langle x|y\rangle z + \langle z|y\rangle  x -\langle x|\overline{z}\rangle \overline{y}, \ \ (x,y,z\in X).$$

The finite dimensional JB$^*$-triples $M_{1,2}(\mathbb{O})$ and $H_3(\mathbb{O}),$ of all $1\times 2$ matrices of (complex) octonions $\mathbb{O}$, and the $3\times 3$ hermitian matrices with entries in $\mathbb{O}$, respectively, are called \textit{Cartan factor of type $5$} and $6$, respectively, which are also called the \textit{exceptional Cartan factors} (see  \cite{HamKalPe2023} for a more detailed presentation).\smallskip

We say that a tripotent $e$ is \emph{(algebraically) minimal} (respectively, \emph{complete} or \emph{maximal}) if $E_{2}(e) = \CC e \neq \{ 0\}$ (respectively, $E_{0}(e) = \{ 0\}$). Let the symbols $\mathcal{U}(E)$,  $\mathcal{U}_{min}(E)$ and $\mathcal{U}_{max}(E)$ stand for the sets of all tripotents, all minimal tripotents, and maximal tripotents in $E$, respectively. There is an additional subclass of tripotents given by unitaries. A tripotent $u$ in $E$ is called unitary if $E_2(u) =E$, in which case $E$ is a unital JB$^*$-algebra with unit $u$. The set $\mathcal{U}_{max}(E)$ admits a remarkable geometric characterization since it coincides with the set of all extreme points of the closed unit ball of $E$ (see, for example, \cite[Corollary 4.8]{EDW_RUTT_JLMS_1988} or \cite[Theorem 4.2.34]{CabreraPalaciosBook}). Consequently, by the Krein-Milman theorem, every JB$^*$-triple enjoying the additional property of being a dual Banach space contains an abundant collection of tripotents. JB$^*$-triples which are dual Banach spaces are called \emph{JBW$^*$-triples}. JBW$^*$-triples enjoy additional properties, for example, each one of them, admits a unique isometric predual and its triple product is separately weak$^*$ continuous (see \cite{Bar_Tim_MathScand_1986}). It is known that the bidual of each JB$^*$-triple is JBW$^*$-triple (see \cite{Dineen_theseconddual_1986}). \smallskip

Let $x,y$ be elements in a JB$^*$-triple $E$. According to the standard notation,  $x$ and $y$ are called \emph{orthogonal} ($x \perp y$ in short) if $L(x,y) = 0$; the reader is referred to \cite[Lemma 1]{BFPGMP08} for additional properties, for example, that the relation ``being orthogonal'' is symmetric. It should be noted that two tripotents, $w,v\in \mathcal{U}(E)$, are orthogonal if and only if $w \in E_{0}(v)$. A natural partial ordering on $\mathcal{U}(E)$ can be established through orthogonality. Given  $e,u\in \mathcal{U}(E),$ we write $e \leq u$ if $u-e \in \mathcal{U}(E) $ and $u -e \perp e$, or equivalently, $e$ is a projection in the unital JB$^*$-algebra $E_{2}(u)$. We note that this relation of orthogonality among elements in $\mathcal{U}(E)$ agrees with the original notion of orthogonality of C$^*$-algebras in the case that $E$ is a C$^*$-algebra. We refer the reader to \cite{Battaglia_1991,FriedRusso_Predual} for a good account of this part. \smallskip

A particular subclass of JBW$^*$-triples is the one of all atomic JBW$^*$-triples. A JBW$^*$-triple $M$ is called \emph{atomic} if the linear span of all minimal tripotents in $M$ is w$^*$-dense in $M$. Every non-zero tripotent in an atomic JBW$^*$-triple can be written as the supremum of a family of mutually orthogonal minimal tripotents in it. The type I von Neumann factor $B(\mathcal{H}),$ of all bounded operators on (complex) Hilbert space $\mathcal{H}$, is an example of atomic JBW$^*$-triple. Minimal tripotents or partial isometries in $B(H)$ are of the form $\xi \otimes \eta$ with $\xi$ and $\eta$ in the unit sphere of $\mathcal{H}$. In fact, all Cartan factors are atomic JBW$^*$-triples. Furthermore, every atomic JBW$^*$-triple can be decomposed as the direct $\ell_{\infty}$-sum of a family of Cartan factors, and every JB$^*$-triple embeds isometrically as a JB$^*$-subtriple of an atomic one (see \cite[Proposition 2 and Theorem E]{FriedRusso_GelfNaim}).\smallskip

Suppose that $a$ is an element in a JBW$^*$-triple $E$. Then there exists a smallest tripotent $e$ in $E$ satisfying that $a$ is a positive element in the JBW$^*$-algebra $E_{2} (e)$. This tripotent is called the \emph{range tripotent} of $a$ (in $E$), and is denoted by $r(a)$.  It is known that $r(a)$ also coincides the range projection of $a$ in $E_2 (r(a))$. If $E$ is a mere JB$^*$-triple, it might contain no tripotents. However, its second dual, $E^{**}$, contains an abundant collection of tripotents. The range tripotent of each element $a$ in $E$ is computed in $E^{**}$. Furthermore, if $F$ is any JB$^*$-subtriple of $E$ containing $a$, the range tripotent of $a$ in $F^{**}$ is precisely the range tripotent of $a$ in $E^{**}$, that is, $r(a)$ does not change when computed with respect to any JB$^*$-subtriple containing $a$. The greatest tripotent  $u(a)$ in $E^{**}$ such that $Q(u(a))(a) = u(a)$ is known as the \emph{support tripotent} of $a$. These tripotents satisfy  $u(a) \leq a \leq r(a)$ in the natural partial ordering of the JBW$^*$-algebra $E^{**}_{2} (r(a))$ (see \cite[Section 3]{EDW_RUTT_JLMS_1988} for more details).\smallskip

Let us finally recall that the JB$^*$-subtriple $E_a$ generated by a single element $a$ in a JB$^*$-triple $E$ is JB$^*$-triple isometrically isomorphic to some commutative C$^*$-algebras admitting $a$ as a positive generator (cf. \cite[Corollary 1.15]{Kaup_RiemanMap}, \cite{Kaup_Sing_Val} and \cite[Theorem 4.2.9]{CabreraPalaciosBook}). It follows from the just commented property that we can always find a unique element $b\in E_a$ satisfying $b^{[3]} =a$. This element will be denoted by $a^{[\frac13]}$.\smallskip

One of the amazing geometric properties of JB$^*$-triples, known as Kaup's Banach-Stone theorem\label{Kaup Banach-Stone}, asserts that a linear bijection between JB$^*$-triples is an isometry if and only if it is a triple isomorphism, i.e., it preserves triple products (cf. \cite[Proposition 5.5]{Kaup_RiemanMap} or  \cite[Theorem 5.6.57]{CabreraPalaciosBook}). In particular, each JB$^*$-triple admits at unique triple product.

\section{Truncations in \texorpdfstring{JB$^*$-}{}triples}\label{sec: truncations}

As we have commented in the introduction, the notion of truncation defined for elements in $B(H)$ also makes sense in the wider setting of JB$^*$-triples. The corresponding definition we introduce here for JB$^*$-triples is a literal translation. 

\begin{defn}\label{def truncation} Let $a,b$ be two elements in a JB$^*$-triple $E$. We say that \emph{$a$ is a truncation of $b$} if $\{a,a,a\} = \{a,b,a\}$.
\end{defn}

Observe that on $B(H)$ the above definition agrees with the usual notion of truncation.\smallskip

Let $S$ ba a subset of a JB$^*$-triple $E$. According to the standard notation (see \cite{AyuArzi2016Rickart, PeRu2014}), we define the \emph{(outer) quadratic annihilator} of $S$ by \[S^{\perp_q}=\{a\in E: Q(a) (S)=\{0\}\},\]
and the \emph{inner quadratic annihilator} is defined as \[^{\perp_q}S=\{a\in E: Q(s)(a)=0, \ \forall\, s\in S\}.\] Let $e$ be a tripotent in $E$. Since the Peirce-2 projection $P_2(e)$ coincides with $Q(e)^2$, and $Q(e)$ is an algebra involution on the JB$^*$-algebra $E_2(e)$, it follows that \begin{equation}\label{quadratic annihilator of a tripotent} ^{\perp_q}\{e\} = E_0(e) \oplus E_1(e).
\end{equation} It is interesting to determine the quadratic annihilator of a single element. 

\begin{lem}\label{l quadratic annihilator of an element} Let $a$ be an element in a JB$^*$-triple $E$. Then $$ ^{\perp_q}\{a\} = E\cap \Big( E_0^{**}(r(a)) \oplus E_1^{**}(r(a))\Big) = \{ x\in E: P_2(r(a)) (x) =0\},$$ where $r(a)$ denotes the range tripotent of $a$ in $E^{**}$. Furthermore, if $E$ is a JBW$^*$-triple, we have $$ ^{\perp_q}\{a\} =  E_0(r(a)) \oplus E_1(r(a)) = \{ x\in E: P_2(r(a)) (x) =0\},$$ where $r(a)$ denotes the range tripotent of $a$ in $E.$
\end{lem}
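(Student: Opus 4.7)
The plan is to use the Peirce decomposition with respect to the range tripotent $r:=r(a)$ and reduce everything to a single non-trivial claim about the injectivity of a quadratic operator. The two easy equalities in the statement, namely $E\cap(E_0^{**}(r)\oplus E_1^{**}(r))=\{x\in E: P_2(r)(x)=0\}$ in the general case, and $E_0(r)\oplus E_1(r)=\{x\in E: P_2(r)(x)=0\}$ in the JBW$^*$-triple case, are immediate from the fact that $P_2(r)$ is the projection onto the Peirce-$2$ subspace along $E_0\oplus E_1$. So the only real content of the lemma is the identification of ${}^{\perp_q}\{a\}$ with this kernel.

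The inclusion ``$P_2(r)(x)=0\Rightarrow Q(a)(x)=0$'' is immediate: writing $x=x_0+x_1\in E_0^{**}(r)\oplus E_1^{**}(r)$ and noting that $a\in E_2^{**}(r)$, Peirce arithmetic gives $\{a,x_0,a\}=0$ (via $\{E_2(r),E_0(r),E^{**}\}=\{0\}$) and $\{a,x_1,a\}\in E_3^{**}(r)=\{0\}$. For the reverse inclusion I take $x\in E$ with $Q(a)(x)=0$, decompose $x=x_0+x_1+x_2$ relative to $r$ in $E^{**}$, and apply the same observation to see that $\{a,x_0,a\}=\{a,x_1,a\}=0$, whence $\{a,x_2,a\}=0$ with $x_2\in E_2^{**}(r)$. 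The whole problem now collapses to showing $x_2=0$, and this takes place entirely inside the JBW$^*$-algebra $J:=E_2^{**}(r)$ (unital with unit $r$), in which $a$ is positive with range projection equal to the unit; equivalently, $a$ has full support in $J$. The triple product formula for JB$^*$-algebras rewrites $\{a,x_2,a\}=0$ as $U_a(x_2^{*_r})=0$, where $U_a$ is the Jordan $U$-operator on $J$.

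The main obstacle is thus the following purely JBW$^*$-algebraic statement: if $a$ is a positive element of a unital JBW$^*$-algebra $J$ whose range projection equals the unit, then $U_a:J\to J$ is injective. I would establish this by approximating $a$ with the strictly positive, invertible elements $a_n:=a+\tfrac{1}{n}r$ of $J$, whose $U$-operators $U_{a_n}$ are Jordan-invertible (with inverses $U_{a_n^{-1}}$), and combining the norm convergence $a_n\to a$ with the fact that $r$ lies in the weak$^*$-closed associative subtriple generated by $a$ to transfer injectivity from $U_{a_n}$ to $U_a$. Once $U_a$ is injective, $U_a(x_2^{*_r})=0$ forces $x_2^{*_r}=0$ and hence $x_2=0$, so $P_2(r)(x)=0$, closing the proof. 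The JBW$^*$-triple version is the very same argument carried out directly in $E$, since $r(a)$ already lies in $E$ and no passage to the bidual is needed.
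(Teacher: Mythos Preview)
Your reduction is the same as the paper's: use Peirce arithmetic with respect to $r=r(a)$ to see that $Q(a)(x)=0$ forces $U_a(x_2^{*_r})=0$ inside the unital JBW$^*$-algebra $J:=E_2^{**}(r)$, where $a$ is positive with range projection equal to the unit, and then conclude $x_2=0$. So the only issue is the one place you flag yourself: why is $U_a$ injective on $J$?

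Here your argument has a genuine gap. Norm convergence $a_n=a+\tfrac1n r\to a$ gives $U_{a_n}\to U_a$ in operator norm, but injectivity (even bijectivity) of each $U_{a_n}$ does \emph{not} pass to the limit; a norm limit of invertible operators can have nontrivial kernel. Your added remark that $r$ lies in the weak$^*$-closed subtriple generated by $a$ does not repair this: you would need something like $U_{c_\epsilon}(y)\to U_r(y)=y$ along a net $c_\epsilon\to r$, but $c\mapsto U_c(y)$ is quadratic and only separate weak$^*$-continuity of the triple product is available, which is not enough. As written, the ``transfer of injectivity'' is an assertion, not an argument.

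The paper handles exactly this point by a different route. It first reduces to $y$ self-adjoint (split into real and imaginary parts), then invokes the Shirshov--Cohn theorem to place $a$ and $y$ inside a C$^*$-algebra $A$. There associativity does the work: from $aya=0$ one gets $a(ya)=0$, hence $r(a^*a)\,ya=0$ by functional calculus, and then $(r(a^*a)y)a=0$ gives $r(a^*a)\,y\,r(aa^*)=0$, i.e.\ $P_2(r(a))(y)=0$. This two-step use of associativity is precisely what is unavailable in a bare Jordan setting, and it is what your approximation scheme was trying to replace. If you want to avoid Shirshov--Cohn you would need an honest argument (for instance via spectral projections of $a$, the identity $U_{a^n}=U_a^{\,n}$, and joint strong$^*$-continuity of the product on bounded sets), but none of that is in your proposal.
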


\begin{proof} Let us begin with an observation. Suppose $a$ and $z$ are two elements in a C$^*$-algebra $A$ such that $a z =0$ (respectively, $z a =0$), it is well known, and easily deduced from functional calculus, that $r(a^*a) z =0$ (respectively, $z r(aa^*)=0$), where $r(a^*a)$ (respectively, $r(aa^*)$) denotes the right (respectively, left) range projection of $a$ in $A^{**}$. In particular, $a z a =0$ implies that $ r(a^*a)  z r(aa^*)=0$. It is also part of the folklore of the theory that the range tripotent of $a$ in $A^{**}$, $r(a),$ is a partial isometry in $A^{**}$ satisfying $r(a) r(a)^* = r(aa^*)$ and $r(a)^* r(a) = r(a^*a)$. Therefore, $P_2 (r(a))  (z^*) = r(a) r(a)^* z^* r(a)^* r(a) =0$. \smallskip
	
Take now a JB$^*$-algebra $\mathfrak{A}$ and two elements $a,z\in \mathfrak{A}$ with $a$ self-adjoint such that $U_{a} (z)=0$. If we write $z = h + i k$ with $h$ and $k$ self-adjoint, we arrive to the conclusion that $U_{a} (h) = U_{a} (k)=0$. By the Shirshov-Cohn theorem (see also \cite[Corollary 2.2]{Wright1977}), the JB$^*$-subalgebra, $\mathfrak{B},$ of $\mathfrak{A}$ generated by $a$ and $h$ is a JB$^*$-subalgebra of some C$^*$-algebra $A$. The condition $0 = U_a (h) = a h a$ in $A$, and the conclusion in the first paragraph gives $P_2 (r(a)) (h) =0$ (in $A$, and hence in $\mathfrak{B}$ and in $\mathfrak{A}$). Similarly, $P_2 (r(a)) (k) =0,$ and thus $P_2 (r(a)) (z) =0$.\smallskip
 	
Finally, take an arbitrary $a\in E$ and consider everything inside the JBW$^*$-triple $E^{**}$. Suppose that $z\in E$ satisfies $Q(a) (z) =0$. Note $r = r(a)\in E^{**}$ and decompose $z = P_2(r) (z) + P_1(r) (z) +P_0(r) (z) = z_2 + z_1 + z_0$ in $E^{**}$.  By Peirce arithmetic, $0 =\{a,z,a\} = \{a,z_2,a\} = U_{a} (z^{*_{{r(a)}}}_2 )$. Since $a$ is positive in $E^{**}_2(r(a))$, the conclusion in the previous paragraph proves that $P_2(r(a)) (z^{*_{{r(a)}}}_2 ) =0 $, equivalently, $z_2 = P_2(r(a)) (z)=0$. This proves that $ ^{\perp_q}\{a\} \subseteq E\cap \Big( E_0^{**}(r(a)) \oplus E_1^{**}(r(a))\Big)$. The equality is a straightforward consequence of Peirce arithmetic.\smallskip

The final statement is clear from the above discussion.     	
\end{proof}

\begin{rem}\label{rem qannihilator for an element in the Peirce-2} Let $e$ be a tripotent in a JB$^*$-triple $E$. Suppose $a$ is an element in $E_2(e)$. Then ${^{\perp_q}\{e\}}\subseteq {^{\perp_q}\{a\}}$. Namely, if $x\in {^{\perp_q}\{e\}}$, it follows that $x\in E_0(e) \oplus E_1 (e)$, and hence by Peirce arithmetic, $\{a,x,a\} =0$, which proves that $x\in {^{\perp_q}\{a\}}.$
\end{rem}

As in the case of operators, the relation ``being a truncation of'' admits several reformulations in terms of range tripotents and quadratic annihilators.

\begin{lem}\label{lem charact truncation}
Let $E$ be a JB$^*$-triple. For any $a,b \in E$, the following are equivalent:
\begin{enumerate}[{$(a)$}]
    \item $a$ is a truncation of $b$;
    \item $b=a+z$ for some $z\in E$ with $\{a,z,a\}=0$ {\rm(}i.e. $z\in {^{\perp_q}\{a\}} ${\rm)};
    \item $a=P_2(r(a)) (b)$ where $r(a)$ is the range tripotent of $a$ in $E^{**}$. 
\end{enumerate}
\end{lem}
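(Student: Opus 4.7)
The plan is to derive the chain of equivalences $(a)\Leftrightarrow(b)\Leftrightarrow(c)$ by combining a direct algebraic manipulation with the description of $^{\perp_q}\{a\}$ furnished by \cref{l quadratic annihilator of an element}.

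First I would establish $(a)\Leftrightarrow(b)$ by setting $z:=b-a$. Although the triple product is conjugate-linear in the middle variable, it is real-linear, so
\[
\{a,b,a\}-\{a,a,a\} = \{a,b-a,a\} = \{a,z,a\}.
\]
Consequently $\{a,a,a\}=\{a,b,a\}$ exactly when $\{a,z,a\}=0$, which is the condition $z\in{^{\perp_q}\{a\}}$.

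Next I would pass to $(b)\Leftrightarrow(c)$ using \cref{l quadratic annihilator of an element}, which identifies
\[
{^{\perp_q}\{a\}} = \{x\in E : P_2(r(a))(x)=0\},
\]
where $r(a)$ is the range tripotent of $a$ in $E^{**}$. Since $a$ is a positive element of the JBW$^*$-algebra $E^{**}_2(r(a))$ with range projection $r(a)$, it lies in $E^{**}_2(r(a))$, and hence $P_2(r(a))(a)=a$. Therefore, writing $z=b-a$,
\[
P_2(r(a))(z) = P_2(r(a))(b) - P_2(r(a))(a) = P_2(r(a))(b) - a.
\]
This shows $z\in{^{\perp_q}\{a\}}$ if and only if $P_2(r(a))(b)=a$, closing the cycle.

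I do not anticipate any substantive obstacle, as the argument is a short bookkeeping exercise once \cref{l quadratic annihilator of an element} is available; the only subtlety worth flagging is the use of real-linearity (rather than complex linearity) of $Q(a)$ in the middle variable when reducing $(a)$ to the annihilator condition.
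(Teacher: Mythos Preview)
Your proof is correct and follows essentially the same approach as the paper: both arguments hinge on \cref{l quadratic annihilator of an element} and the identification $z=b-a$. The only cosmetic difference is that the paper closes the cycle $(c)\Rightarrow(a)$ directly via Peirce arithmetic (writing $a^{[3]}=P_2(r)\{a,P_2(r)b,a\}=\{a,b,a\}$) rather than passing back through $(b)$, but your bidirectional treatment of $(b)\Leftrightarrow(c)$ via the lemma is equally valid and arguably tidier.
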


\begin{proof} $(a) \Rightarrow  (b)$ If $a$ is a truncation of $b$, $\{a,a,a\} = \{a,b,a\}$, or equivalently, $\{a, b-a, a\}=0$. That is, $b-a \in {^{\perp_q}\{a\}}.$ Lemma~\ref{l quadratic annihilator of an element} implies that $P_2 (r(a)) (b-a) =0$. Set $z = b-a\in E$ to get the desired element in the statement. \smallskip

$(b) \Rightarrow  (c)$ Suppose $b = a +z$ with $\{a,z,a\} =0$. Lemma~\ref{l quadratic annihilator of an element} assures that $P_2 (r(a)) (z) =0$, and hence $$P_2(r(a))(b)=P_2(r(a))(a+z)=P_2(r(a))(a)+P_2(r(a)) (z)=P_2(r(a))(a)=a.$$

$(c) \Rightarrow  (a)$ By setting $r=r(a),$ we have
$$a^{[3]}=P_2(r)(\{a,a,a\})= P_2(r)(\{a,P_2(r)(b),a\})=\{P_2(r)a,b,P_2(r)a\}=\{a,b,a\},$$ where in the second equality we applied Peirce arithmetic.
\end{proof}

An interesting consequence of Lemma~\ref{lem charact truncation} shows that, as in the case of projections in $B(H)$, when restricted to tripotents the relation ``being a truncation of'' is an equivalent reformulation of the partial ordering among these elements.  

\begin{cor}\label{rem trunc for tripotents}
Let $v,e$ be two tripotents in a JB$^*$-triple $E$. Then the following statements are equivalent:
\begin{enumerate}[$(a)$]
        \item $e$ is a truncation of $v$;
        \item $e\leq v$.
        %\item there exists a tripotent $w\perp e$ such that $f=e+w.$
    \end{enumerate}
\end{cor}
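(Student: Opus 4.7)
The direction $(b) \Rightarrow (a)$ is straightforward from Peirce arithmetic: if $e \leq v$, then $v - e$ is a tripotent orthogonal to $e$, hence $v - e \in E_0(e)$, and the Peirce special rule $\{E_2(e), E_0(e), E\} = 0$ gives $\{e, v - e, e\} = 0$. Therefore $\{e, v, e\} = \{e, e, e\} + \{e, v - e, e\} = e$, which exhibits $e$ as a truncation of $v$.

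For the nontrivial converse $(a) \Rightarrow (b)$, assume $\{e, v, e\} = \{e, e, e\} = e$. Since $e$ is a tripotent we have $r(e) = e$, so \cref{lem charact truncation}$(c)$ yields $e = P_2(e)(v)$. Writing the Peirce decomposition relative to $e$ as
\[
v = e + v_1 + v_0, \qquad v_j \in E_j(e),
\]
it suffices to show $v_1 = 0$: once this is known, $v - e = v_0 \in E_0(e)$ is automatically orthogonal to $e$, and the tripotent equation $\{v, v, v\} = v$ collapses (after Peirce arithmetic) to $\{v_0, v_0, v_0\} = v_0$, so $v_0$ is a tripotent and $e \leq v$ by definition.

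The plan for proving $v_1 = 0$ is to exploit the tripotent identity $\{v, v, v\} = v$. Expanding by trilinearity using $v = e + v_1 + v_0$, and applying Peirce arithmetic together with the special rules $\{E_2(e), E_0(e), E\} = \{E_0(e), E_2(e), E\} = 0$ and the symmetry $\{a, b, c\} = \{c, b, a\}$, the projection of the identity onto $E_2(e)$ becomes
\[
2\{e, v_1, v_1\} + \{v_1, v_0, v_1\} = 0. \tag{$\dagger$}
\]
The element $\{e, v_1, v_1\} = \{v_1, v_1, e\}$ is a positive element of the JBW$^*$-algebra $E_2(e)$; this is a standard consequence of the JB$^*$-triple axioms, the map $(x, y) \mapsto \{e, x, y\}$ defining an $E_2(e)$-valued positive definite sesquilinear form on $E_1(e)$.

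The final step combines $(\dagger)$ with a Cauchy--Schwarz-type estimate $\|\{v_1, v_0, v_1\}\| \leq \|v_0\| \cdot \|\{e, v_1, v_1\}\|$ (which in the $M_2$-toy case $v = e_{11} + a e_{12} + b e_{21} + c e_{22}$ reduces to the AM--GM inequality $|a||b| \leq \tfrac{|a|^2 + |b|^2}{2}$), together with the contractivity of Peirce projections, which gives $\|v_0\| = \|P_0(e)(v)\| \leq \|v\| = 1$. Taking norms in $(\dagger)$ and using these two facts yields $2\|\{e, v_1, v_1\}\| \leq \|\{e, v_1, v_1\}\|$, forcing $\{e, v_1, v_1\} = 0$. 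Positive-definiteness of the form $\{e, \cdot, \cdot\}$ on $E_1(e)$ then gives $v_1 = 0$, completing the proof. The main obstacle is establishing the Cauchy--Schwarz-type inequality at the level of general JB$^*$-triples; an acceptable alternative, should that estimate prove awkward, is to pass to the finite-dimensional JB$^*$-subtriple generated by $\{e, v\}$, reduce to the Cartan factor classification, and verify each case directly.
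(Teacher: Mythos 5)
Your $(b)\Rightarrow(a)$ direction and the reduction of $(a)\Rightarrow(b)$ to showing $P_1(e)(v)=0$ are correct, and the Peirce-2 component identity $(\dagger)$, $2\{e,v_1,v_1\}+\{v_1,v_0,v_1\}=0$, is computed correctly. The gap is that the entire weight of the argument then rests on the inequality $\|\{v_1,v_0,v_1\}\|\leq \|v_0\|\,\|\{e,v_1,v_1\}\|$, which you do not prove and which is not an off-the-shelf fact for general JB$^*$-triples: the crude estimates $\|Q(v_1)v_0\|\leq\|v_1\|^2\|v_0\|$ and ``$\|\{e,v_1,v_1\}\|\geq\|v_1\|^2$'' do not combine to give it, because the second one is false (already in $M_2$, with $e=e_{11}$ and $v_1=a\,e_{12}$, one has $\|\{e,v_1,v_1\}\|=|a|^2/2<\|v_1\|^2$). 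So the only nontrivial content of the corollary is exactly the step you leave open. Your fallback is not viable as stated either: the JB$^*$-subtriple generated by two tripotents need not be finite dimensional (two projections in $B(H)$ can already generate an infinite-dimensional C$^*$-algebra), and such subtriples are not covered by the Cartan factor classification; one would instead have to embed into an atomic JBW$^*$-triple and verify the inequality in all six types of Cartan factors, including the exceptional ones, which is genuine work not carried out here. Similarly, the positivity and positive-definiteness of $x\mapsto\{x,x,e\}$ on $E_1(e)$, while true, are asserted rather than justified.

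The paper avoids all of this: from \cref{lem charact truncation} it obtains $P_2(e)(v)=e$ and then invokes \cite[Corollary 1.7]{FriedRusso_Predual}, which says precisely that for tripotents this identity is equivalent to $e\leq v$. In effect you are attempting to reprove that result of Friedman and Russo from scratch. To repair the proposal, either quote that result, as the paper does, or supply a complete proof of the missing Cauchy--Schwarz-type estimate (or of $P_1(e)(v)=0$ by some other complete route).
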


\begin{proof} $(a)\Rightarrow (b)$ By Lemma~\ref{lem charact truncation} $e$ is a truncation of $v$ if and only if $v = e + z$ for some $z\in {^{\perp_q}\{e\}} = E_0(e) \oplus E_1(e)$ (cf. \eqref{quadratic annihilator of a tripotent}). In particular,  $P_2(e) (v) = e$, which by \cite[Corollary 1.7]{FriedRusso_Predual} is equivalent to $e\leq v$. \smallskip
	
The implication $(b)\Rightarrow (a)$ is clear since $e\leq v$ if and only if $v-e$ is a tripotent orthogonal to $e$. In particular $\{e,v,e\} = \{e,e,e\} + \{e,v-e,e\} = \{e,e,e\}=e$.	
\end{proof}

\section{Preservers of the truncation of triple products}\label{sec: preservers of the trunc of triple products}

Once we have introduced the notion of truncation in the setting of JB$^*$-triples, it becomes evident that the result by Jia, Shi and Ji commented in the introduction (see \cite{Jia_Shi_Ji_AnnFunctAnn_2022}) motivates the following definition and problem on preservers. 

\begin{defn}\label{def preservers of truncations of triple products} Let $E, F$ be two JB$^*$-triples and let $\Delta:E\to F$ be a {\rm(}non-necessarily linear{\rm)} bijective  mapping. We said that $\Delta$ \textit{preserves the truncation of triple products} if 
\begin{align}\label{define trunc preservers one way}
		\boxed{a \mbox{ is a truncation of } Q(b)(c)} \Rightarrow \boxed{\Delta(a)  \mbox{ is a truncation of } Q(\Delta(b)) (\Delta(c))}.  \end{align} 
If the equivalence \begin{align}\label{define trunc preservers}
	\boxed{a \mbox{ is a truncation of } Q(b)(c)} \Leftrightarrow \boxed{\Delta(a)  \mbox{ is a truncation of } Q(\Delta(b)) (\Delta(c))}\end{align} holds we say that $\Delta$ \textit{preserves the truncation of triple products in both directions}.  
\end{defn}

It is clear that \cref{define trunc preservers} is equivalent to:
\begin{align}\label{char trunc preservers}
	\boxed{a^{[3]}=\{a,Q(b)(c),a\}}  \Leftrightarrow \boxed{\Delta(a)^{[3]}=\{ \Delta(a),  Q(\Delta(b)) (\Delta(c)),\Delta(a)\}}.  \end{align} By replacing ``$\Leftrightarrow$'' with ``$\Rightarrow$'' in the previous identity we get a reformulation of \eqref{define trunc preservers one way}.

\begin{probl}\label{prob1} Let $\Delta: E\to F$ be a {\rm(}non-necessarily linear{\rm)} bijection between JB$^*$-triples. Suppose that $\Delta$ preserves the truncation of triple products in both directions. Is $\Delta$ a (continuous, actually isometric) linear triple isomorphism? 
\end{probl}

The main result in this paper gives a positive answer to the above problem in the case that $E$ and $F$ are two Cartan  factors of rank greater than or equal to $2$, or more generally, two atomic JBW$^*$-triples not containing Cartan factors of rank-one. If we also assume that our mapping $\Delta$ is continuous when restricted to rank-one Cartan factors, we also arrive to the same conclusion.   \smallskip

In this section we study the properties of  bijective preservers of truncations of triple products in both directions in the most general setting assuming that domain and codomain are general JB$^*$-triples. For this purpose, along this section, unless otherwise indicated, $E$ and $F$ will stand for two JB$^*$-triples, while $\Delta : E \to F$ will be a {\rm(}non-necessarily linear{\rm)} bijective map preserving the truncation of triple products in both directions (see \cref{define trunc preservers}, \cref{char trunc preservers}). 

\begin{lem}\label{lem preserve annihil} The following statements hold:
\begin{enumerate}[$(i)$]
\item\label{lem preserve annihil 0} $\Delta(0)=0;$	
\item\label{lem preserve annihil 1} For all $a,b\in E$ we have $Q(a)(b)=0$ if and only if $ Q(\Delta(a))(\Delta(b))=0;$
\item\label{lem preserve annihil 2} $\Delta(S^{\perp_q})=\Delta(S)^{\perp_q},$ for every subset $S\subseteq E$;
\item\label{lem preserve annihil 3} $\Delta({^{\perp_q}S})={^{\perp_q}\Delta(S)}$, for every subset $S\subseteq E$;
\item\label{lem preserve annihil 2 orth to ann} If $a,b\in E$ with $a\perp b,$ we have $Q(\Delta(a))(\Delta(b))=Q(\Delta(b))(\Delta(a))=0.$
\end{enumerate}
\end{lem}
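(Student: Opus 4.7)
The plan is to prove (i) and (ii) directly, and then to deduce (iii)--(v) from (ii) by short arguments. The underlying principle throughout will be that, by the Gelfand--Naimark axiom, an element $z$ in a JB$^*$-triple satisfies $z=0$ if and only if $z^{[3]}=0$, so any ``truncation of the zero element'' must itself be zero.

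For (i), I would first exploit the bijectivity of $\Delta$ to pick $x_{0}\in E$ with $\Delta(x_{0})=0$. Since $0$ is trivially a truncation of every element (the identity $\{0,0,0\}=0=\{0,y,0\}$ holds for every $y$), it is in particular a truncation of $Q(x_{0})(x_{0})=x_{0}^{[3]}$; applying the forward direction of the preserver property yields that $\Delta(0)$ is a truncation of $Q(\Delta(x_{0}))(\Delta(x_{0}))=Q(0)(0)=0$, that is, $\Delta(0)^{[3]}=\{\Delta(0),0,\Delta(0)\}=0$, and Gelfand--Naimark then forces $\Delta(0)=0$.

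The heart of the lemma is (ii), where I would rely on the following characterization, which uses only truncations of $Q$-products: $Q(a)(b)=0$ if and only if $0$ is the only truncation of $Q(a)(b)$ in $E$. Indeed, when $Q(a)(b)=0$, any truncation $x$ of it satisfies $x^{[3]}=\{x,0,x\}=0$ and hence $x=0$; and when $Q(a)(b)\neq 0$, the element $Q(a)(b)$ is itself a nonzero truncation of $Q(a)(b)$. The preserver property, in both directions, sets up a bijection $x\mapsto \Delta(x)$ between the truncations of $Q(a)(b)$ in $E$ and those of $Q(\Delta(a))(\Delta(b))$ in $F$, and by (i) this bijection sends $0$ to $0$. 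Hence the above characterization transfers through $\Delta$, proving (ii).

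Items (iii)--(v) then follow quickly. For (iii), $a\in S^{\perp_{q}}$ means $Q(a)(s)=0$ for every $s\in S$, and applying (ii) together with the bijectivity of $\Delta$ turns this condition into $\Delta(a)\in \Delta(S)^{\perp_{q}}$; part (iv) is entirely analogous with the roles of the outer and inner quadratic annihilators interchanged. For (v), since $a\perp b$ is equivalent to $L(a,b)=0$ and orthogonality is symmetric, one has $Q(a)(b)=\{a,b,a\}=L(a,b)(a)=0$ and $Q(b)(a)=\{b,a,b\}=L(b,a)(b)=0$; applying (ii) to each of these vanishings yields the required identities for $\Delta(a)$ and $\Delta(b)$. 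I do not expect a genuine obstacle beyond correctly isolating the characterization in (ii); once (i) is in place to anchor $\Delta$ at zero, the remaining parts are essentially bookkeeping.
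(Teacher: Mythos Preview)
Your proof is correct and follows essentially the same approach as the paper. The only cosmetic difference is in part~(ii): the paper argues directly by setting $d=Q(\Delta(a))(\Delta(b))$, observing that $d$ is a truncation of $Q(\Delta(a))(\Delta(b))$, and then applying $\Delta^{-1}$ to conclude $\Delta^{-1}(d)^{[3]}=\{\Delta^{-1}(d),Q(a)(b),\Delta^{-1}(d)\}=0$; your repackaging of this as ``$Q(a)(b)=0$ iff $0$ is its only truncation'' together with the induced bijection of truncation sets is the same idea in slightly more conceptual dress.
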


\begin{proof}(\!\!\cref{lem preserve annihil 0}) It is clear that $0$ is a truncation of $b^{[3]}=Q(b)b,$ for every $b\in E.$ Thus, by hypotheses we have $\Delta(0)^{[3]}=\{ \Delta(0),\{\Delta(b),\Delta(b),\Delta(b) \},\Delta(0)\} $ for every $b\in E.$ By surjectivity there exists $b\in E$ such that $\Delta(b)=0,$ whence $\Delta(0)=0,$ by the Gelfand--Naimark axiom in the definition of JB$^*$-triple.\smallskip

(\!\!\cref{lem preserve annihil 1}) Assume first that $Q(a) (b)=0$. Set $d=Q(\Delta(a))(\Delta(b)).$ Then $d^{[3]}=\{d,Q(\Delta(a))(\Delta(b)),d\} .$ Since $\Delta^{-1}$ also satisfies \cref{char trunc preservers} we get
$$\Delta^{-1}(d)^{[3]}=\{\Delta^{-1}(d),Q(a)(b),\Delta^{-1}(d)\}=0,$$ which implies that $ \Delta^{-1}(d)=0,$ and thus $Q(\Delta(a))(\Delta(b))=d=0.$ For the other implication, we just note that $\Delta$ also satisfies \cref{char trunc preservers}, and hence the previous argument applied to $\Delta^{-1}$ gives the implication.\smallskip

The statements (\!\!\cref{lem preserve annihil 2})
and (\!\!\cref{lem preserve annihil 3}) are clear consequences of (\!\!\cref{lem preserve annihil 1}).\smallskip
 
(\!\!\cref{lem preserve annihil 2 orth to ann}) If $a\perp b$ we have $\{a,b,a\}=\{b,a,b\}=0.$ Now, by applying   (\!\!\cref{lem preserve annihil 1}) we get $Q(\Delta(a))(\Delta(b))=Q(\Delta(b))(\Delta(a))=0$.
 \end{proof}
 
In our next result we reveal that bijective preservers of truncations of triple products in both directions preserve von Neumann regularity in both directions.\smallskip  

Recall that an element $a$ in a JB$^*$-triple $E$ is called \textit{von Neumann regular} if there exists $b\in E$ such that $Q(a)(b)=a.$ This element $b$ is not, in general, unique. However, if $a$ is von Neumann regular, there exists a unique $b$ in $E$ satisfying $Q(a)(b)=a,$ $Q(b)(a)=b$ and $[Q(a),Q(b)]:=Q(a)Q(b)-Q(b)Q(a)=0$ (see \cite[Lemma 4.1]{Ka96} or \cite{BurKaMoPeRa,FerGarSanSi92,FerGarSanSi94}). The unique element $b\in E$ satisfying the previous properties is called the \textit{generalized inverse} of $a$ in $E$, and will be denoted by $a^{\dag}$. The symbol $E^{\dag}$ will stand for the set of all von Neumann regular elements in $E$.

\begin{lem}\label{lem pres vn regular}
    $\Delta$ preserves von Neumann regularity in both directions, that is, an element $a$ in $E$ is von Neumann regular if and only if $\Delta (a)$ is. Furthermore if $a$ is von Neumann regular, $\{\Delta (a^{\dag}), \Delta(a),\Delta (a^{\dag})\}  = \Delta (a^{\dag})$ and $\{\Delta(a),\Delta (a^{\dag}),\Delta(a) \} = \Delta(a)$. 
\end{lem}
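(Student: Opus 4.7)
The plan is to exploit a simple but crucial observation: \emph{for any element $a$ in a JB$^*$-triple $E$, an element of the form $Q(a)(b)$ has $a$ as a truncation if and only if $Q(a)(b)=a$}. Indeed, Peirce arithmetic with respect to the range tripotent $r(a)\in E^{**}$ shows that $Q(a)$ maps $E$ into $E^{**}_2(r(a))$, so $P_2(r(a))\bigl(Q(a)(b)\bigr)=Q(a)(b)$; hence by \cref{lem charact truncation}\emph{(c)}, the condition ``$a$ is a truncation of $Q(a)(b)$'' reads $a=P_2(r(a))(Q(a)(b))=Q(a)(b)$.

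With this in hand, the first assertion follows at once. By definition $a\in E^{\dag}$ if and only if there exists $c\in E$ with $Q(a)(c)=a$, which by the observation is equivalent to the existence of $c\in E$ such that $a$ is a truncation of $Q(a)(c)$. Applying the preservation property (\cref{def preservers of truncations of triple products}) and the bijectivity of $\Delta$, this is equivalent to the existence of $d\in F$ such that $\Delta(a)$ is a truncation of $Q(\Delta(a))(d)$; applying the same observation now in $F$ yields $Q(\Delta(a))(d)=\Delta(a)$, i.e.\ $\Delta(a)\in F^{\dag}$. Bidirectionality of the preserver guarantees that the converse direction also works.

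Assume now that $a\in E^{\dag}$. The generalized inverse $a^{\dag}$ is characterized by $Q(a)(a^{\dag})=a$ and $Q(a^{\dag})(a)=a^{\dag}$. The first of these identities says that $a$ is a truncation of $Q(a)(a^{\dag})$; by the preserver property, $\Delta(a)$ is a truncation of $Q(\Delta(a))(\Delta(a^{\dag}))$, and applying the observation in $F$ gives $\{\Delta(a),\Delta(a^{\dag}),\Delta(a)\}=\Delta(a)$. Interchanging the roles of $a$ and $a^{\dag}$ in the same argument, the identity $Q(a^{\dag})(a)=a^{\dag}$ yields $\{\Delta(a^{\dag}),\Delta(a),\Delta(a^{\dag})\}=\Delta(a^{\dag})$. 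No serious obstacle is anticipated: the entire proof rests on the Peirce-arithmetic observation of the first paragraph, which reduces the identity ``$Q(a)(b)=a$'' to a truncation statement accessible to the hypothesis on $\Delta$.
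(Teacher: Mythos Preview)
Your proof is correct and follows essentially the same approach as the paper's: both hinge on the fact that ``$a$ is a truncation of $Q(a)(b)$'' is equivalent to ``$Q(a)(b)=a$'', established via Peirce arithmetic showing $Q(a)(b)\in E^{**}_2(r(a))$ and then invoking \cref{lem charact truncation}$(c)$ (the paper uses \cref{l quadratic annihilator of an element} inline to the same effect). Your version is slightly cleaner in isolating this observation upfront, but the mathematical content is identical.
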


\begin{proof} Let $a\in E$ be a von Neumann regular element with generalized inverse $a^{\dag}$. Since $a^{[3]}=\{a,Q(a)(a^{\dag}),a\},$ by hypotheses we have  
$$\Delta(a)^{[3]}=\{ \Delta(a),Q( \Delta(a))(\Delta(a^{\dag})) ,\Delta(a)\}.$$ We deduce form the above equality that  $Q(\Delta(a))\Big( \Delta(a)-Q( \Delta(a)) (\Delta(a^{\dag})) \Big)=0.$ Lemma~\ref{l quadratic annihilator of an element} implies that $P_2 (r(\Delta(a))) \Big( \Delta(a)-Q(\Delta(a)) (\Delta(a^{\dag})) \Big) = 0.$ However, since $\Delta(a)$ and $Q(\Delta(a)) (\Delta(a^{\dag}))$ are elements in the unital JBW$^*$-algebra $F_2^{**}(r(\Delta(a)))$ (just apply Peirce arithmetic), we get  $\Delta(a)=Q( \Delta(a)) (\Delta(a^{\dag}))$. Since $a$ is the generalized inverse of $a^{\dag},$ it follows from the previous conclusion that  $\Delta(a^\dag)=Q( \Delta(a^\dag)) (\Delta(a))$. This proves that $\Delta (a)$ is von Neumann regular. The same conclusion applied to $\Delta^{-1}$ shows that $a$ is von Neumann regular whenever $\Delta (a)$ is.\end{proof}

As a corollary we can now prove that bijective preservers of truncations of triple products in both directions also preserve tripotents in both directions.

\begin{lem}\label{lem pres tripotents and leq} $\Delta$ preserves tripotents and partial order among tripotents in both directions. In particular $\Delta$ maps bijectively (order) minimal and maximal tripotents to (order) minimal and maximal tripotents, respectively, that is, $\Delta\left( \mathcal{U}_{min}(E) \right) = \mathcal{U}_{min}(F)$, and $\Delta\left( \mathcal{U}_{max}(E) \right) = \mathcal{U}_{max}(E)$.  
\end{lem}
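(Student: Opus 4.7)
The plan is to leverage the two immediately preceding results in the excerpt: \cref{lem pres vn regular} (preservation of von Neumann regularity, together with the identities $\{\Delta(a^{\dag}),\Delta(a),\Delta(a^{\dag})\}=\Delta(a^{\dag})$ and $\{\Delta(a),\Delta(a^{\dag}),\Delta(a)\}=\Delta(a)$) and \cref{rem trunc for tripotents} (the equivalence ``$e\leq v$'' $\Leftrightarrow$ ``$e$ is a truncation of $v$'' for tripotents $e,v$). Together these make the statement essentially automatic.

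First, I would verify that $\Delta$ maps $\mathcal{U}(E)$ bijectively onto $\mathcal{U}(F)$. Every tripotent $e$ is trivially von Neumann regular with $e^{\dag}=e$, since $Q(e)(e)=\{e,e,e\}=e$ and $[Q(e),Q(e)]=0$. Plugging $a=a^{\dag}=e$ into the identity furnished by \cref{lem pres vn regular} yields $\{\Delta(e),\Delta(e),\Delta(e)\}=\Delta(e)$, so $\Delta(e)\in\mathcal{U}(F)$. The converse direction is immediate, as $\Delta^{-1}$ satisfies the same hypotheses as $\Delta$.

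Second, I would deduce preservation of the partial order. Let $e,v\in\mathcal{U}(E)$. Because $v$ is a tripotent we have $v=Q(v)(v)$, so \cref{rem trunc for tripotents} lets us rewrite ``$e\leq v$'' as ``$e$ is a truncation of $Q(v)(v)$.'' The defining property \cref{char trunc preservers} of $\Delta$ now transports this to ``$\Delta(e)$ is a truncation of $Q(\Delta(v))(\Delta(v))$''; since $\Delta(v)\in\mathcal{U}(F)$ by the first step, $Q(\Delta(v))(\Delta(v))=\Delta(v)$, and a second appeal to \cref{rem trunc for tripotents} converts this into $\Delta(e)\leq\Delta(v)$. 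The reverse implication follows from applying the same chain of equivalences to $\Delta^{-1}$.

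Finally, the claims about order minimality and maximality are a direct consequence. If $e$ is order minimal in $E$ and $f\in\mathcal{U}(F)$ satisfies $f\leq\Delta(e)$, then $\Delta^{-1}(f)$ is a tripotent with $\Delta^{-1}(f)\leq e$; minimality of $e$ forces $\Delta^{-1}(f)\in\{0,e\}$, so $f\in\{0,\Delta(e)\}$ (using $\Delta(0)=0$ from \cref{lem preserve annihil}). The dual argument handles maximal tripotents. I do not anticipate any genuine obstacle: the proof is essentially bookkeeping on top of the two preceding lemmas. The only subtlety worth flagging is the order in which the two steps must be performed — tripotent preservation has to precede order preservation, since the reduction $Q(\Delta(v))(\Delta(v))=\Delta(v)$ relies on knowing that $\Delta(v)$ is itself a tripotent.
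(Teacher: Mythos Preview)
Your proposal is correct and follows essentially the same route as the paper: first use \cref{lem pres vn regular} with $e^{\dag}=e$ to see that tripotents go to tripotents, then combine \cref{rem trunc for tripotents} with the defining property of $\Delta$ (applied to $v=Q(v)(v)$) to transfer the partial order; the minimal/maximal claim is then immediate. The paper is somewhat terser and leaves the last step implicit, but the substance is identical.
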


\begin{proof} Let $e$ be a tripotent in $E$. It is clear, and well-known, that $e$ is von Neumann regular with $e^{\dag} = e$. It follows from the previous Lemma~\ref{lem pres vn regular} that $\Delta(e)$ is von Neumann regular and $\{\Delta(e), \Delta(e) , \Delta(e)\} = \Delta (e),$ which proves that $\Delta (e)$ is a tripotent. This conclusion applied to $\Delta^{-1}$ assures that $\Delta$ preserves tripotents in both directions. The final statement is a consequence of the hypothesis on $\Delta$,  Corollary~\ref{rem trunc for tripotents} and the fact that $e\leq v$ in $\mathcal{U} (E)$ if and only if $e$ is a truncation of $v =\{v,v,v\}$. 
\end{proof}

\begin{rem}\label{r Delta preserves the truncation of tripotents} It should be remarked that any mapping $\Delta$ satisfying our hypotheses preserves the truncation of tripotents. Namely, suppose $a$ is a truncation of a tripotent $e$ in the domain JB$^*$-triple $E$, that is, $\{a,a,a\}= \{a,e,a\}$. Since $\{a,a,a\} = \{a,e,a\} = \{a,\{e,e,e\},a\}= \{a,Q(e)(e),a\},$ the properties of $\Delta$ and \cref{lem pres tripotents and leq} assure that $\Delta(a)$ is a truncation of $\Delta(e)$. 
\end{rem}

Now, as a consequence of \cref{lem preserve annihil}$(iv)$ and the fact that $\Delta\left({^{\perp_q}\{a\}} \right) = {^{\perp_q}\{\Delta(a)\}}$, \cref{lem pres tripotents and leq} and \eqref{quadratic annihilator of a tripotent} we obtain:

\begin{cor}\label{c preservation Perice 1 + 0} The equality  \begin{align}\label{eq preser Peirce 1+0 for e}
		\Delta(E_1(e)\oplus E_0(e))=F_1(\Delta(e))\oplus F_0(\Delta(e))
	\end{align} holds for every tripotent $e$ in $E$. \hfill$\Box$
\end{cor}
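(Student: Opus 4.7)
The plan is to chain together three ingredients already established: the description of the inner quadratic annihilator of a single tripotent from equation~\eqref{quadratic annihilator of a tripotent}, the fact from Lemma~\ref{lem pres tripotents and leq} that $\Delta$ sends tripotents to tripotents, and the preservation of inner quadratic annihilators in part \eqref{lem preserve annihil 3} of Lemma~\ref{lem preserve annihil}.

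First, I would fix a tripotent $e\in E$ and rewrite the left-hand side of the asserted identity using \eqref{quadratic annihilator of a tripotent}, so that
\begin{equation*}
E_{1}(e)\oplus E_{0}(e) \;=\; {^{\perp_q}\{e\}}.
\end{equation*}
Applying $\Delta$ and invoking Lemma~\ref{lem preserve annihil}\eqref{lem preserve annihil 3} with the singleton $S=\{e\}$, I obtain
\begin{equation*}
\Delta\bigl(E_{1}(e)\oplus E_{0}(e)\bigr)
= \Delta\bigl({^{\perp_q}\{e\}}\bigr)
= {^{\perp_q}\{\Delta(e)\}}.
\end{equation*}

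Next, Lemma~\ref{lem pres tripotents and leq} guarantees that $\Delta(e)$ is a tripotent in $F$, so a second application of \eqref{quadratic annihilator of a tripotent}, this time inside $F$, yields
\begin{equation*}
{^{\perp_q}\{\Delta(e)\}} \;=\; F_{0}(\Delta(e))\oplus F_{1}(\Delta(e)).
\end{equation*}
Chaining these three equalities gives the desired identity. There is no real obstacle here: the whole corollary is just the observation that both sides of the target equation admit the uniform description ``inner quadratic annihilator of a tripotent,'' and that $\Delta$ transports this notion because it transports both tripotents and inner quadratic annihilators.
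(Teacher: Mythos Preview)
Your proof is correct and matches the paper's approach exactly: the paper likewise just cites \eqref{quadratic annihilator of a tripotent}, Lemma~\ref{lem preserve annihil}\eqref{lem preserve annihil 3}, and Lemma~\ref{lem pres tripotents and leq}, and ends with a $\Box$. There is nothing to add.
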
 

Our next corollary will also follow from Lemma~\ref{l quadratic annihilator of an element} and Lemma~\ref{lem preserve annihil}$(iv)$.

\begin{cor}\label{c preservation Perice 1 + 0 for an element} The equality  \begin{align}\label{eq preser Peirce 1+0 for a} \Delta\left(E\cap \Big( E_1^{**}(r(a))\oplus E_0^{**}(r(a))\Big) \right)=F\cap \Big( F_1^{**} (r(\Delta(a))) \oplus F_0^{**}(r(\Delta(a)))\Big)
	\end{align} holds for every element $a$ in $E$. Furthermore, if $E$ and $F$ are JBW$^*$-triples, we also have 
\begin{align}\label{eq preser Peirce 1+0 for a JBW}
	\Delta\Big(E_1 (r(a))\oplus E_0(r(a))\Big)=F_1(r(\Delta(a))\oplus F_0(r(\Delta(a))).
\end{align}	
	\hfill$\Box$
\end{cor}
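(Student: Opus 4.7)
The plan is to combine the two ingredients cited in the hint directly, with essentially no additional work. First, I would apply \cref{lem preserve annihil}\,(iv) to the singleton subset $S=\{a\}\subseteq E$, which yields the set equality
\[
\Delta\bigl({^{\perp_q}\{a\}}\bigr) \;=\; {^{\perp_q}\{\Delta(a)\}}.
\]
Then \cref{l quadratic annihilator of an element}, applied at $a\in E$ and at $\Delta(a)\in F$ respectively, rewrites the two sides as
\[
\Delta\Bigl(E\cap\bigl(E_0^{**}(r(a))\oplus E_1^{**}(r(a))\bigr)\Bigr) \;=\; F\cap\bigl(F_0^{**}(r(\Delta(a)))\oplus F_1^{**}(r(\Delta(a)))\bigr),
\]
which is exactly the first assertion of the corollary.

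For the furthermore clause, in the JBW$^*$-triple setting the second part of \cref{l quadratic annihilator of an element} identifies the inner quadratic annihilator of a single element with the direct sum of its Peirce $0$ and Peirce $1$ subspaces at the range tripotent computed inside the JBW$^*$-triple itself, with no need to intersect with an ambient space. Substituting this refined identification into both sides of $\Delta({^{\perp_q}\{a\}})={^{\perp_q}\{\Delta(a)\}}$ produces the displayed equality in \textbf{(eq preser Peirce 1+0 for a JBW)}.

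There is no real obstacle: the entire content of the corollary has been packaged into the two lemmas. The only point worth a moment's care while writing the proof is to be explicit that $r(a)$ is taken in $E^{**}$ (respectively in $E$, when $E$ is a JBW$^*$-triple) and $r(\Delta(a))$ in $F^{**}$ (respectively in $F$), matching the conventions of \cref{l quadratic annihilator of an element}. This is automatic but worth a sentence in the write-up so that the reader sees both sides of the claimed equality are honestly subsets of $E$ and $F$ respectively.
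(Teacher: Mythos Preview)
Your proposal is correct and follows exactly the paper's approach: the paper states explicitly that the corollary follows from \cref{l quadratic annihilator of an element} and \cref{lem preserve annihil}$(iv)$, which is precisely the combination you outline.
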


We are now in a position to prove certain kind of additivity of our mapping $\Delta$ on orthogonal elements. 
    
\begin{lem}\label{lem almost orth add} Suppose $a$ and $b$ are two orthogonal elements in $E$. Then there exists an element $z\in {^{\perp_q}\{\Delta(a)}\}\cap {^{\perp_q}\{\Delta(b)\}}$ such that $$\Delta\left(a^{[\frac{1}{3}]}+b^{[\frac{1}{3}]}\right)^{[3]}=\Delta(a)+\Delta(b)+z.$$ 
   \end{lem}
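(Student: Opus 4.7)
The plan is to exhibit an explicit element $c \in E$ satisfying $Q(c)(c) = a+b$, so that the preservation hypothesis applied with a single $Q$-operator yields both required truncation identities, and then to read off $z$ from what remains. The natural candidate is $c := a^{[\frac{1}{3}]} + b^{[\frac{1}{3}]}$.

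First, I would verify that $c^{[3]} = a+b$. Since $a \perp b$ in $E$, the singly generated JB$^*$-subtriples $E_a$ and $E_b$ are mutually orthogonal (for instance, $E_a \subseteq E_2^{**}(r(a))$ while $E_b \subseteq E_0^{**}(r(a))$), which gives $a^{[\frac{1}{3}]} \perp b^{[\frac{1}{3}]}$. Expanding the cube and using that all mixed triple products of two orthogonal elements vanish yields $c^{[3]} = (a^{[\frac{1}{3}]})^{[3]} + (b^{[\frac{1}{3}]})^{[3]} = a+b$.

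Next, I would observe that $a$, and symmetrically $b$, is a truncation of $Q(c)(c) = c^{[3]} = a+b$: indeed $\{a, a+b, a\} = \{a,a,a\} + \{a,b,a\} = a^{[3]}$, the cross-term vanishing because $a \perp b$. Applying the preservation hypothesis (\cref{define trunc preservers}) with the triples $(a,c,c)$ and $(b,c,c)$, and setting $w := \Delta(c)^{[3]} = Q(\Delta(c))(\Delta(c))$, I conclude
\[
\{\Delta(a), w, \Delta(a)\} = \Delta(a)^{[3]} \quad \text{and} \quad \{\Delta(b), w, \Delta(b)\} = \Delta(b)^{[3]}.
\]

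Finally, I would set $z := w - \Delta(a) - \Delta(b)$, so that tautologically $\Delta(c)^{[3]} = \Delta(a) + \Delta(b) + z$, and check the annihilator conditions. Compute
\[
Q(\Delta(a))(z) = Q(\Delta(a))(w) - Q(\Delta(a))(\Delta(a)) - Q(\Delta(a))(\Delta(b));
\]
the first term equals $\Delta(a)^{[3]} = Q(\Delta(a))(\Delta(a))$ by the truncation identity just established, and the third vanishes by \cref{lem preserve annihil 2 orth to ann} applied to $a \perp b$, so $Q(\Delta(a))(z) = 0$. The symmetric computation yields $Q(\Delta(b))(z) = 0$, giving $z \in {^{\perp_q}\{\Delta(a)\}} \cap {^{\perp_q}\{\Delta(b)\}}$. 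There is no serious obstacle here; the only small insight is the use of cube roots to represent $a+b$ as $Q(c)(c)$, so that the preservation hypothesis and the orthogonality-preservation property conspire to give both the existence of $z$ and the required annihilator conditions.
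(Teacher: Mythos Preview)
Your proof is correct and follows essentially the same approach as the paper: both use the cube-root trick to realize $a+b$ as $Q(c)(c)$ with $c=a^{[\frac13]}+b^{[\frac13]}$, apply the preservation hypothesis to obtain the two truncation identities for $w=\Delta(c)^{[3]}$, and then invoke \cref{lem preserve annihil}\ref{lem preserve annihil 2 orth to ann}. Your endgame is in fact slightly cleaner than the paper's, which extracts $z$ indirectly via intermediate elements $z_a,z_b$ coming from \cref{lem charact truncation}, whereas you simply set $z:=w-\Delta(a)-\Delta(b)$ and verify the annihilator conditions by a direct (real-)linear expansion of $Q(\Delta(a))(z)$ and $Q(\Delta(b))(z)$.
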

   
\begin{proof}
Since $a\perp b,$ it easily follows that $a^{[\frac{1}{3}]}\perp b^{[\frac{1}{3}]}$ (cf. \cite[Lemma 1.1]{BFPGMP08}), and hence  
\begin{equation}\label{eq 1 lem almost orth add}
a^{[3]}=\{a,(a^{[\frac{1}{3}]}+b^{[\frac{1}{3}]})^{[3]},a\}, \ \ \hbox{ and } \ \  b^{[3]}=\{b,(a^{[\frac{1}{3}]}+b^{[\frac{1}{3}]})^{[3]},b\}
\end{equation}
which, by the hypotheses on $\Delta$, implies that  
\begin{equation}\label{eq 2 lem almost orth add}\begin{aligned}
		&\Delta(a)^{[3]}=\{\Delta(a),\Delta(a^{[\frac{1}{3}]}+b^{[\frac{1}{3}]})^{[3]},\Delta(a)\},\\ &\hbox{and } \\ &\Delta(b)^{[3]}=\{\Delta(b),\Delta(a^{[\frac{1}{3}]}+b^{[\frac{1}{3}]})^{[3]},\Delta(b)\}.
	\end{aligned}
\end{equation}
    
By \cref{lem charact truncation} there exists $z_a\in {^{\perp_q}\{\Delta(a)\}},z_b\in {^{\perp_q}\{\Delta(b)\}}$ such that 
\begin{align}\label{eq 3 lem almost orth add}
\Delta(a^{[\frac{1}{3}]}+b^{[\frac{1}{3}]})^{[3]}=\Delta(a)+z_a \quad\mbox{and} \quad\Delta(a^{[\frac{1}{3}]}+b^{[\frac{1}{3}]})^{[3]}=\Delta(b)+z_b
\end{align} hold. Since by \cref{lem preserve annihil}$(iv)$, $\Delta(b)\in {^{\perp_q}\{\Delta(a)\}},$ by combining the second equality in \cref{eq 3 lem almost orth add} with the first one in \cref{eq 2 lem almost orth add} we have 
$$ \Delta(a)^{[3]} = \{ \Delta(a),\Delta(b) +z_b,\Delta(a)\} =\{ \Delta(a),z_b,\Delta(a)\},$$ and hence, by a new application of Lemma~\ref{lem charact truncation}, there exists $z\in {^{\perp_q}\{\Delta(a)\}}$ such that $z_b=\Delta(a)+z.$ We therefore have $\Delta(a^{[\frac{1}{3}]}+b^{[\frac{1}{3}]})^{[3]}=\Delta(a)+\Delta(b)+z.$ Now we plug this identity in the second equality of \cref{eq 2 lem almost orth add}, combined with the fact that $\Delta(a)\in {^{\perp_q}\{\Delta(b)\}}$ (cf. \cref{lem preserve annihil}$(iv)$) to show that 
$$\Delta(b)^{[3]}=\{\Delta(b),\Delta(a)+\Delta(b)+z,\Delta(b)\} =\{\Delta(b),\Delta(b)+z,\Delta(b)\},$$
which by virtue of \cref{lem charact truncation}, implies that $z\in {^{\perp_q}\{\Delta(b)\}}.$
\end{proof}

\begin{lem}\label{lem orthogonal tripotents}
Suppose $e_1,\dots,e_n$ are mutually orthogonal tripotents in $E$. Then $\Delta(e_1),\dots,\Delta(e_n)$ are mutually orthogonal tripotents in $F$ and $$\Delta(e_1+\dots+e_n)=\Delta(e_1)+\dots+\Delta(e_n).$$
\end{lem}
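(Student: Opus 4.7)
The plan is to establish the identity for $n = 2$ and then induct. For the base case, I would apply \cref{lem almost orth add} to $e_1, e_2$: since $e_i^{[\frac13]} = e_i$ for the tripotents and $u := \Delta(e_1+e_2)$ is itself a tripotent by \cref{lem pres tripotents and leq}, the lemma reduces to
\[
u = \Delta(e_1) + \Delta(e_2) + z, \qquad z \in {}^{\perp_q}\{\Delta(e_1)\}\cap {}^{\perp_q}\{\Delta(e_2)\}.
\]
Order preservation ($e_i \leq e_1 + e_2$ passes to $\Delta(e_i) \leq u$ via \cref{lem pres tripotents and leq}) shows that $u - \Delta(e_1) = \Delta(e_2) + z$ is a tripotent orthogonal to $\Delta(e_1)$. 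Since $z \in {}^{\perp_q}\{\Delta(e_2)\}$, \cref{lem charact truncation} exhibits $\Delta(e_2)$ as a truncation of the tripotent $u - \Delta(e_1)$, and \cref{rem trunc for tripotents} upgrades this to $\Delta(e_2) \leq u - \Delta(e_1)$. In particular $z = (u - \Delta(e_1)) - \Delta(e_2)$ is a tripotent orthogonal to $\Delta(e_2)$. Moreover, the tripotent $\Delta(e_2)$ lies in $F_2(u-\Delta(e_1)) \subseteq F_0(\Delta(e_1))$ — the inclusion is a direct consequence of Peirce arithmetic whenever two tripotents are orthogonal — hence $\Delta(e_1) \perp \Delta(e_2)$. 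Swapping the roles of $\Delta(e_1)$ and $\Delta(e_2)$ yields $\Delta(e_1) \perp z$, so $\Delta(e_1),\Delta(e_2),z$ are pairwise orthogonal tripotents summing to $u$.

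To force $z = 0$, I would run the same construction on $\Delta^{-1}$, which satisfies the same hypotheses. This produces a tripotent $z'$ orthogonal to $e_1$ and $e_2$ with
\[
\Delta^{-1}(\Delta(e_1)+\Delta(e_2)) = e_1 + e_2 + z'.
\]
Because $\Delta(e_1)+\Delta(e_2)$ is a tripotent $\leq u$ (its orthogonal complement in $u$ being $z$), order preservation of $\Delta^{-1}$ gives $e_1+e_2+z' \leq e_1+e_2$, so $-z' = (e_1+e_2)-(e_1+e_2+z')$ must be a tripotent orthogonal to $e_1+e_2+z'$. In particular $z' \perp z'$, which, for a tripotent, forces $z' = \{z',z',z'\} = 0$. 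Hence $\Delta^{-1}(\Delta(e_1)+\Delta(e_2)) = e_1+e_2$, that is $\Delta(e_1+e_2) = \Delta(e_1)+\Delta(e_2)$, and $z = 0$.

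For $n \geq 3$ I would induct. Assuming the statement for $n-1$, observe that $e_n \perp \sum_{i<n} e_i$ and apply the $n=2$ case to the orthogonal tripotents $e_n$ and $\sum_{i<n} e_i$. This yields $\Delta\!\left(\sum_{i\leq n} e_i\right) = \sum_{i\leq n} \Delta(e_i)$ together with $\Delta(e_n) \perp \sum_{i<n} \Delta(e_i)$. Since each $\Delta(e_i)$ ($i<n$) sits in $F_2\!\left(\sum_{j<n}\Delta(e_j)\right)$, the same Peirce inclusion as above upgrades the last relation to $\Delta(e_n) \perp \Delta(e_i)$ for every $i<n$, yielding the claimed mutual orthogonality.

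The decisive point is the vanishing $z = 0$ in the base case: the hypothesis only places $z$ in the quadratic annihilators of $\Delta(e_1)$ and $\Delta(e_2)$, which by itself is too weak to force $z$ to be zero. The way around it is the symmetric detour through $\Delta^{-1}$ combined with order preservation, which traps the mirror image $z'$ into being self-orthogonal — a condition incompatible with being a nonzero tripotent.
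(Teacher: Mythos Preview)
Your proof is correct and follows essentially the same route as the paper: reduce to $n=2$ by induction, invoke \cref{lem almost orth add} to obtain $u=\Delta(e_1)+\Delta(e_2)+z$, upgrade the quadratic-annihilator conditions to genuine orthogonality among the three tripotents via order preservation and \cref{rem trunc for tripotents}, and then exploit the symmetry $\Delta\leftrightarrow\Delta^{-1}$ together with order preservation to force $z=0$. The only cosmetic difference is in that last step: the paper looks directly at $\Delta^{-1}(z)$ (using that the first half already shows $\Delta^{-1}$ sends orthogonal tripotents to orthogonal tripotents, hence $\Delta^{-1}(z)\perp e_1,e_2$ and $\Delta^{-1}(z)\leq e_1+e_2$), whereas you rerun the whole construction on $\Delta^{-1}$ to produce a mirror term $z'$ and trap it via $e_1+e_2+z'\leq e_1+e_2$.
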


\begin{proof}
It is enough to show the result for $n=2.$ The statement for $n\in \mathbb{N}$ will  then easily follow by induction if we observe that the sum of a finite family of mutually orthogonal tripotents is a tripotent.\smallskip

Let $e_1$ and $e_2$ be two orthogonal tripotents in $E$.  By \cref{lem pres tripotents and leq} $\Delta(e_1),\Delta(e_2)$ and $\Delta(e_1+e_2)$ are tripotents in $F$ and $\Delta$ preserves the partial order among tripotents in both directions.\smallskip 

Back to the arguments in the proof of \cref{lem almost orth add} with $a= e_1$ and $b = e_2$, $\Delta(e_1)$ and $\Delta(e_2)$ are tripotents, we see in \eqref{eq 2 lem almost orth add} that $\Delta(e_1)$ and $\Delta (e_2)$ are truncations of $\Delta (e_1 + e_2)$, and thus \cref{rem trunc for tripotents} affirms that $z_a$ and $z_b$ in the commented proof are tripotents in $F$ with $z_a\perp \Delta(e_1)$ and $z_b \perp \Delta(e_2)$. Following the same arguments we obtain that the tripotent $\Delta (e_1)$ is a truncation of the tripotent $z_b$, and hence, again by \cref{rem trunc for tripotents}, $z_b = \Delta (e_1) + z$, where $z$ is a tripotent in $F$ with $z\perp \Delta (e_1)$ and $\Delta (e_1 + e_2) = \Delta (e_1) + \Delta (e_2) + z$. Note that $\Delta (e_1), z\leq \Delta (e_1) + z = z_b\perp \Delta (e_2)$, and consequently $\Delta (e_1), z\perp \Delta(e_2)$.\smallskip   

Observe that $\Delta^{-1}$ also preserves tripotents, order, and orthogonality among them. Thus, $\Delta^{-1}(z)$ is a tripotent, $e_1=\Delta^{-1}( \Delta(e_1)) \perp \Delta^{-1}(z), e_2=\Delta^{-1}( \Delta(e_2)) \perp \Delta^{-1}(z),$ and the inequality $e_1,e_2,\Delta^{-1}(z)\leq e_1+e_2$ holds. However, in such a case  $\Delta^{-1}(z)\perp e_1+e_2$ and $\Delta^{-1}(z)\in E_2(e_1+e_2).$ This shows that necessarily $\Delta^{-1}(z)=0,$ and hence $z=0.$
\end{proof}

The direct sum of the Peirce-1 and -0 subspaces associated with a tripotent $e$ in $E$ is clearly preserved by $\Delta$ (cf. Corollary~\ref{c preservation Perice 1 + 0 for an element}). We deal next with the Peirce-2 subspace.  

\begin{prop}\label{Peirce2toPeirce2}
    Let $e\in E$ be a tripotent in $E$. Then $\Delta(E_2(e)) = F_2(\Delta(e)).$
\end{prop}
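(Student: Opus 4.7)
My plan is to reduce the proposition to the following intrinsic characterization of the Peirce-$2$ subspace, valid in any JB$^*$-triple: for an element $a$ and a tripotent $e$ one has $a\in E_2(e)$ if and only if ${^{\perp_q}\{e\}}\subseteq {^{\perp_q}\{a\}}$. Granted this characterization, the proposition becomes a formal consequence. Indeed, \cref{lem preserve annihil} ensures that $\Delta$ turns the containment ${^{\perp_q}\{e\}}\subseteq {^{\perp_q}\{a\}}$ inside $E$ into ${^{\perp_q}\{\Delta(e)\}}\subseteq {^{\perp_q}\{\Delta(a)\}}$ inside $F$, and because $\Delta(e)$ is itself a tripotent in $F$ by \cref{lem pres tripotents and leq}, the characterization applied on the $F$-side reads $\Delta(a)\in F_2(\Delta(e))$. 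Applying the same reasoning to $\Delta^{-1}$ will give the reverse inclusion, hence the equality $\Delta(E_2(e))=F_2(\Delta(e))$.

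The direction $\Rightarrow$ of the characterization is exactly \cref{rem qannihilator for an element in the Peirce-2}. For the converse I would write the Peirce decomposition $a=a_2+a_1+a_0$ with $a_j\in E_j(e)$ and test the hypothesis on the summands $a_0$ and $a_1$, both of which lie in ${^{\perp_q}\{e\}}=E_0(e)\oplus E_1(e)$ and therefore, by assumption, in ${^{\perp_q}\{a\}}$. Expanding $Q(a)(a_0)=\{a_2+a_1+a_0,a_0,a_2+a_1+a_0\}$ by trilinearity and invoking the vanishing rules $\{E_0(e),E_2(e),E\}=\{E_2(e),E_0(e),E\}=0$ together with the standard Peirce arithmetic, I expect the $E_0(e)$-component of the resulting sum to collapse to the single term $a_0^{[3]}$; the Gelfand--Naimark axiom $\|a_0^{[3]}\|=\|a_0\|^3$ will then force $a_0=0$. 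With $a_0=0$ secured, a parallel expansion of $Q(a)(a_1)=\{a_2+a_1,a_1,a_2+a_1\}$ should have $E_1(e)$-component equal to $a_1^{[3]}$, so that $a_1=0$ and hence $a=a_2\in E_2(e)$.

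The only genuinely non-trivial ingredient is the converse of the characterization, and inside that the main obstacle is the Peirce-slot bookkeeping: among the various triples $\{a_i,a_0,a_k\}$ (respectively $\{a_i,a_1,a_k\}$) produced by each trilinear expansion, one must verify that exactly one lands in the prescribed Peirce slot and that it is a pure cube of a Peirce summand of $a$. Once the characterization is in place, the conclusion $\Delta(E_2(e))=F_2(\Delta(e))$ is purely formal; in particular, no passage to the biduals $E^{**}$, $F^{**}$, no range-tripotent analysis, and no continuity assumption on $\Delta$ will be required.
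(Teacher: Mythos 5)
Your proposal is correct and is essentially the paper's own argument: the paper likewise combines \cref{rem qannihilator for an element in the Peirce-2}, \cref{lem preserve annihil} and \cref{lem pres tripotents and leq}, and then carries out exactly the Peirce-component computation you describe (using $Q$-annihilation to kill first the Peirce-$0$ part, then the Peirce-$1$ part, with the slot bookkeeping working out precisely as you anticipate), the only difference being that the paper performs this computation directly on $\Delta(a)$ relative to $\Delta(e)$ in $F$ rather than packaging it as an intrinsic characterization of $E_2(e)$ in a general JB$^*$-triple. The concluding step, obtaining the reverse inclusion by applying the same reasoning to $\Delta^{-1}$, also coincides with the paper's proof.
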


\begin{proof} Fix $a\in E_2(e).$ Then $^{\perp_q}\{e\}\subseteq ^{\perp_q}\{a\}$ (cf. Remark~\ref{rem qannihilator for an element in the Peirce-2}). Thus, by statement (\!\!\cref{lem preserve annihil 3}) in Lemma~\ref{lem preserve annihil},
    \begin{align}\label{eq prop pres peirce 2}
       F_0(\Delta(e))\oplus F_1(\Delta(e))={^{\perp_q}\{\Delta(e)\}}\subseteq ^{\perp_q}\{\Delta(a)\}.
    \end{align}
 Set $x_i=P_i(\Delta(e)) (\Delta(a)),$ in particular $\Delta(a) = x_0+x_1+x_2$. It follows from \cref{eq prop pres peirce 2} that $x_0,x_1\in {^{\perp_q}\{\Delta(a)\}}.$ Therefore,
 $$\begin{aligned}
 	0=\{\Delta(a),x_0,\Delta(a)\}&=\{x_2+x_1+x_0,x_0,x_2+x_1+x_0\}=(x_0\perp x_2)  \\
 	&=\{x_1+x_0,x_0,x_1+x_0\}=\{x_1,x_0,x_1\}+x_0^{[3]}+2\{x_1,x_0,x_0\}.
 \end{aligned}$$ 
 Since, by Peirce arithmetic,  $\{x_1,x_0,x_1\}\in F_2(\Delta(e))$, $\{x_0,x_0,x_0\}\in F_0(\Delta(e))$ and $\{x_1,x_0,x_0\} \in F_1(\Delta(e))$, we deduce that $x_0^{[3]}=\{x_1,x_0,x_1\}=\{x_1,x_0,x_0\}=0,$ and hence $x_0 =0$ and $\Delta(a)=x_2+x_1.$\smallskip 
 
Note now that $x_1 \in {^{\perp_q}\{\Delta(a)\}}$, so $$0=\{x_2+x_1,x_1,x_2+x_1\}=\{x_2,x_1,x_2\}+x_1^{[3]}+2\{x_2,x_1,x_1\}=x_1^{[3]}+2\{x_2,x_1,x_1\}.$$ It follows from $x_1^{[3]}\in F_1(\Delta(e))$ and $\{x_2,x_1,x_1\} \in F_2(\Delta(e))$ that $x_1^{[3]}=\{x_2,x_1,x_1\}=0.$ Thus, $x_1=0$ and $\Delta(a)=x_2\in F_2(\Delta(e)).$\smallskip

We have shown that $\Delta(E_2(e)) \subseteq F_2(\Delta(e)),$ for any mapping $\Delta$ under our hypotheses. Since $\Delta^{-1}$ satisfies the same property, we arrive to 
$\Delta^{-1} \left(F_2(\Delta(e))\right) \subseteq E_2(e),$ which concludes the proof.
\end{proof}

As a consequence of \cref{Peirce2toPeirce2} we obtain that for each minimal tripotent $e$ in $E$ we have
$$\Delta(\CC e) = \Delta(E_{2}(e)) = F_{2}(\Delta(e)) = \CC \Delta(e).$$
Thus, there exists a bijection $f_{e} : \CC \to \CC,$\label{def of fe} which depends on $e,$ satisfying $\Delta (\lambda e )= f_{e}(\lambda) \Delta(e)$ for any $\lambda \in \CC$. \smallskip

We shall next study the properties of the mappings $f_e$.

\begin{cor}\label{cor of fmultiplicative}
Let $e$ be a minimal tripotent in $E$. The following properties hold for all $\lambda, \mu \in \CC \backslash \{0\}$:
\begin{enumerate}[$(i)$]\item\label{c 311 f(0)=0} $f_e (0) =0$ and $f_e (\mathbb{T}) = \mathbb{T}$, where $\mathbb{T}$ denotes the unit sphere of $\mathbb{C}$; 
\item\label{fpreserves vn regular emelemnts} $\Delta (Q(a)(b))= Q(\Delta(a)) (\Delta(b)),$ for all $a, b \in E_{2}(e)$;
\item\label{fpreserves triple multiplicative} $ f_{e}(\lambda^2 \Bar{\mu}) = {f_{e}(\lambda)}^{2}\  \overline{f_{e}(\mu) }$;
    \item\label{fsquare and conjugate} ${f_{e}(\lambda)}^2 = f_{e}(\lambda ^2)$ and $f_{e}(\Bar{\lambda}) = \overline{f_{e}(\lambda)}$;
    \item\label{fmultiplicative} $f_{e}(\lambda \mu)=f_{e}(\lambda) f_{e}(\mu)$. 
\end{enumerate}
\end{cor}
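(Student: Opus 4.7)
\textbf{Proof proposal for Corollary~\ref{cor of fmultiplicative}.} The plan is to establish the statements in the order $(i)\Rightarrow(ii)\Rightarrow(iii)\Rightarrow(iv)\Rightarrow(v)$, the key being that (ii) collapses to a pure statement about $f_e$, and the remaining items are algebraic manipulations of the identity supplied by (ii).

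First I would dispose of (i). Since $\Delta(0)=0$ by \cref{lem preserve annihil}(\ref{lem preserve annihil 0}), evaluating $\Delta(0\cdot e)=f_e(0)\Delta(e)$ gives $f_e(0)=0$. Also, evaluating $\Delta(e)=\Delta(1\cdot e)=f_e(1)\Delta(e)$ gives $f_e(1)=1$ (a fact I shall need later). For $\lambda\in\CC\setminus\{0\}$ a direct triple-product computation shows that $\lambda e$ is a tripotent iff $|\lambda|=1$. Because $\Delta$ preserves tripotents in both directions (\cref{lem pres tripotents and leq}) and, by \cref{Peirce2toPeirce2}, restricts to a bijection $\CC e \to \CC\Delta(e)$, one gets $f_e(\mathbb{T})=\mathbb{T}$.

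Next I would prove (ii), which is the step that truly uses the hypothesis on $\Delta$. Fix $a,b\in E_2(e)=\CC e$ and set $c:=Q(a)(b)\in E_2(e)$. Trivially $c$ is a truncation of $Q(a)(b)$ (equality case of \cref{def truncation}), so by the hypothesis $\Delta(c)$ is a truncation of $Q(\Delta(a))(\Delta(b))$. Both elements lie in the one-dimensional Peirce subspace $F_2(\Delta(e))=\CC\Delta(e)$ (by \cref{Peirce2toPeirce2} and Peirce arithmetic). A one-line computation in $\CC\Delta(e)$ shows that for $x=\alpha\Delta(e)$ and $y=\beta\Delta(e)$, the equality $\{x,x,x\}=\{x,y,x\}$ forces $\alpha=0$ or $\alpha=\beta$. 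Hence $\Delta(c)=0$ or $\Delta(c)=Q(\Delta(a))(\Delta(b))$. In the first case, bijectivity of $\Delta$ together with $\Delta(0)=0$ yields $c=Q(a)(b)=0$, and then \cref{lem preserve annihil}(\ref{lem preserve annihil 1}) gives $Q(\Delta(a))(\Delta(b))=0$ as well, so equality holds in both cases.

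From (ii) statement (iii) is now immediate: writing $a=\lambda e$ and $b=\mu e$, the identity $Q(\lambda e)(\mu e)=\lambda^2\bar\mu\, e$ (conjugate linearity in the middle variable) together with (ii) yields
\[
f_e(\lambda^2\bar\mu)\,\Delta(e)=\Delta\bigl(Q(\lambda e)(\mu e)\bigr)=Q\bigl(f_e(\lambda)\Delta(e)\bigr)\bigl(f_e(\mu)\Delta(e)\bigr)=f_e(\lambda)^2\,\overline{f_e(\mu)}\,\Delta(e).
\]
Specialising (iii) with $\mu=1$ and using $f_e(1)=1$ gives $f_e(\lambda^2)=f_e(\lambda)^2$; specialising with $\lambda=1$ yields $f_e(\bar\mu)=\overline{f_e(\mu)}$; this is (iv). Finally, applying (iii) to $\bar\mu$ in place of $\mu$ and invoking (iv) we obtain $f_e(\lambda^2\mu)=f_e(\lambda)^2 f_e(\mu)=f_e(\lambda^2)f_e(\mu)$. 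Since every complex number is a square in $\CC$, setting $\alpha=\lambda^2$ we conclude $f_e(\alpha\mu)=f_e(\alpha)f_e(\mu)$ for all $\alpha,\mu\in\CC$, which is (v). The only non-routine step is (ii); the rest is bookkeeping, and I anticipate no serious obstacle beyond the one-dimensional analysis of the truncation relation.
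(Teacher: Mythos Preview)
Your proof is correct and follows essentially the same route as the paper's. The only cosmetic difference is in step~(ii): the paper argues that $Q(\Delta(Q(a)(b)))$ is invertible on the one-dimensional space $F_2(\Delta(e))$ (because $f_e(\lambda^2\bar\mu)\neq 0$ when $\lambda,\mu\neq 0$) and hence annihilates only zero, whereas you do the equivalent scalar computation $\alpha^2\bar\alpha=\alpha^2\bar\beta\Rightarrow\alpha=0$ or $\alpha=\beta$; the remaining items are handled identically.
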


\begin{proof}$(\!\!\!\cref{c 311 f(0)=0})$ The first conclusion is a consequence of the fact that $\Delta(0)=0$ (cf. \cref{lem preserve annihil}$(i)$), while the second one follows from \cref{lem pres tripotents and leq}, the definition of $f_e$ (or \cref{Peirce2toPeirce2}) and the minimality of $e$ and $\Delta (e)$.\smallskip

$(\!\!\!\cref{fpreserves vn regular emelemnts})$  
Fix arbitrary $a, b \in E_{2}(e) = \mathbb{C} e$. If $a=0$ or $b=0$, it follows from \cref{lem preserve annihil}$(i)$ that $\Delta(a) = 0$ or $\Delta (b)=0$, and hence the desired equality is trivial. We can therefore assume that $a,b\neq 0$, equivalently, $a = \lambda e$ and $b = \mu e$ for some $\lambda,\mu\neq 0$. It follows from the fact that $Q(a)b$ is a truncation of itself that 
$$Q(\Delta (Q(a)(b))) (\Delta(Q(a)(b))- Q(\Delta(a))(\Delta(b)) ) = 0,$$
that is, $\Delta (Q(a)(b)) = Q(\Delta(a))(\Delta(b))$ since clearly $\Delta (Q(a)(b)) = \Delta (\lambda^2\overline{\mu} e ) = f_e (\lambda^2\overline{\mu}) Q(\Delta(e))$ is an invertible mapping on $F_2(\Delta(e))$.\smallskip

$(\!\!\!\cref{fpreserves triple multiplicative})$ Observe that  $\{\lambda e, \mu e, \lambda e\}$ is in $E_{2}(e)$, then by $(\!\!\!\cref{fpreserves vn regular emelemnts})$ we have  
$$\begin{aligned}
	f_{e}(\lambda^2 \Bar{\mu}) \Delta(e) &= \Delta (\lambda^2 \ \bar{\mu}e) = \Delta(\{\lambda e, \mu e, \lambda e \}) = \{\Delta(\lambda e),\Delta(\mu e), \Delta(\lambda e) \} \\
	&= \{f_{e}(\lambda) \Delta(e),f_{e}(\mu) \Delta( e), f_{e}(\lambda) \Delta( e) \}  = {f_{e}(\lambda)}^{2} \overline{f_{e}(\mu) }  \Delta (e). 
\end{aligned}$$

$(\!\!\!\cref{fsquare and conjugate})$ It follows immediately from  $(\!\!\!\cref{fpreserves triple multiplicative})$.\smallskip

$(\!\!\!\cref{fmultiplicative})$ This statement is evident from the previous one, since every element $\lambda \in \CC$ can be wrote as a square of some other complex number.
\end{proof}

\begin{rem}\label{r prop in cor 3.11 are not enough}  The mapping $f : \mathbb{C}\to \mathbb{C}$ given by $f(\lambda) = \lambda^{-1}$ if $\lambda\neq 0$ and $f(0)=0$ satisfies all the conclusions for the map $f_e$ in \cref{cor of fmultiplicative}, but it is not linear. 
\end{rem}

We gather in the next lemma some sufficient conditions to guarantee that our mapping $f_e$ is the identity. We gather some facts in the proof of \cite[Theorems 2.1 and 3.1]{Jia_Shi_Ji_AnnFunctAnn_2022}, needed in our arguments, which perhaps can be available for other purposes. 

\begin{lem}\label{l suffcient cond for fe identity}
Let $f:\mathbb{C}\to \mathbb{C}$ be a bijection satisfying the following hypotheses for all $\lambda, \mu \in \CC \backslash \{0\}$:
\begin{enumerate}[$(i)$]%\item $f(0) =0$; 
%\item $f(\lambda^2 \Bar{\mu}) = {f(\lambda)}^{2}\  \overline{f(\mu) }$;
\item $f(\Bar{\lambda}) = \overline{f(\lambda)}$;
\item $f(\lambda \mu)=f(\lambda) f(\mu)$;
\item $f(t) =t,$ for all $t\in [0,1]$. 
\end{enumerate} Then $f (r) = r$ for all $r\in \mathbb{R}.$ If we further assume that $f(\lambda) = \lambda $ for all unitary $\lambda \in \mathbb{C}$, the mapping $f$ is the identity on $\mathbb{C}$. 
\end{lem}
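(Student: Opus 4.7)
The plan is to exploit the multiplicativity (ii) together with the normalization (iii) to push the identity behavior from $[0,1]$ outward, first to all positive reals, then to all of $\mathbb{R}$, and finally (under the extra hypothesis) to all of $\mathbb{C}$ via polar decomposition.

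First I would extend $f(t)=t$ from $[0,1]$ to all $t>0$. For $t>1$ the reciprocal $1/t$ lies in $(0,1]$, so by (iii) one has $f(1/t)=1/t$; since (iii) also gives $f(1)=1$, multiplicativity (ii) applied to the pair $(t,1/t)$ yields $f(t)\,f(1/t)=1$, hence $f(t)=t$. Combined with (iii) this gives $f(t)=t$ for every $t\ge 0$.

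Next I would handle the sign. From (ii), $f(-1)^{2}=f(1)=1$, so $f(-1)\in\{1,-1\}$; injectivity of $f$ together with $f(1)=1$ forces $f(-1)=-1$. Multiplying by positive reals (via (ii)) then gives $f(-t)=-t$ for $t>0$, and hence $f(r)=r$ for all $r\in\mathbb{R}$. (Hypothesis (i) is actually not needed for this first conclusion; it would merely provide an alternative route, since for real $r$ the identity $f(r)=\overline{f(r)}$ forces $f(r)$ real, and the only real square roots of $t^{2}$ compatible with injectivity are $\pm t$.)

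For the second assertion, assume in addition that $f(\lambda)=\lambda$ for every unimodular $\lambda$. Given any nonzero $\lambda\in\mathbb{C}$, write it polarly as $\lambda=|\lambda|\cdot(\lambda/|\lambda|)$, a product of a positive real with a unitary; applying (ii) and the two cases already established gives
\[
f(\lambda)=f(|\lambda|)\,f(\lambda/|\lambda|)=|\lambda|\cdot(\lambda/|\lambda|)=\lambda.
\]
Since $f(0)=0$ by (iii), we conclude $f=\mathrm{id}_{\mathbb{C}}$. There is no real obstacle here: the statement is essentially a routine consequence of multiplicativity and the built-in normalization, and the argument is elementary modulo bookkeeping about $f(-1)$ and the polar factorization.
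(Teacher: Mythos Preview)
Your proof is correct and follows essentially the same route as the paper: extend from $[0,1]$ to $[1,\infty)$ via $f(t)=f(1/t)^{-1}$, pin down $f(-1)=-1$ by injectivity, and then use polar decomposition together with the assumption on unimodular elements for the final statement. Your remark that hypothesis (i) is not actually needed for the real case is a valid observation not made explicit in the paper.
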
   

\begin{proof} Observe that the hypotheses $(i)$ and $(ii)$ already assure that $f(0)=0$, $f(1) =1,$ $f(\lambda^{-1}) = f(\lambda)^{-1}$ for all $\lambda\neq 0$, and $f(\mathbb{T}) = \mathbb{T}$. Namely, by the surjectivity we can find $\lambda \in \mathbb{C}$ with $f(\lambda) = 0$. Then $f (\lambda \mu ) = f(\lambda) f(\mu) = 0=f(\lambda)$ for all $\mu$. It follows from the injectivity of $f$ that $\lambda \mu = \lambda $ for all $\mu \in \mathbb{C}$, and then $\lambda =0$. The rest is clear. \smallskip
	
By $(ii)$ and $(iii),$ for any $\lambda \in [1,+\infty)$, we have $f(\lambda) = f(\lambda^{-1})^{-1} = \lambda$.  So $f(\lambda) = \lambda $ for all $\lambda \in\mathbb{R}_0^+$.  Since $f(-1)^2 = f(1) = 1$, we can only have $f(-1) = -1$ due to the injectivity of $f$ and $f(1) =1$. Therefore, for $\lambda<0$, $f(\lambda)= f(-1) f(-\lambda) = \lambda.$\smallskip

The final statement can be obtained by just observing that for each non-zero $\lambda$ in $\mathbb{C}$ we have $f(\lambda ) = f(\frac{\lambda}{|\lambda|} \ |\lambda|) =  f(\frac{\lambda}{|\lambda|}) f(|\lambda|) = \lambda.$
\end{proof}

\subsection{Preservers of the truncation of triple products on \texorpdfstring{JBW$^*$-}{}algebras}\ \smallskip

The motivation to study the particular setting in which the domain of our mapping $\Delta$ is a JBW$^*$-algebra comes from \cref{Peirce2toPeirce2} and the fact that for element $a$ in a JBW$^*$-triple $E$ there exists a maximal tripotent $e\in E$ such that $a\in E_2(e)$ (cf. \cite[Lemma 3.12]{Horn_MathScand_1987}). Recall that in this case $E_2(e)$ a JBW$^*$-algebra, that is, a JB$^*$-algebra which is additionally a dual Banach space (it is in particular unital). In this subsection we shall additionally assume that $E$ is a unital JBW$^*$-algebra, and $F$ is a JBW$^*$-triple.\smallskip

According to the standard notation, the self-adjoint idempotents in a JB$^*$-algebra $A$ are called \emph{projections}, and we write $A_{sa}$ for the real subspace of all self-adjoint elements in $A$. Observe that the projections of $A$ are precisely the positive tripotents in $A$. We shall write $\mathcal{P} (A)$ for the (possibly empty) set of all projections in $A$.\smallskip

We recall that an element $a$ in a unital JB$^*$-algebra $A$ is called \emph{invertible} if there exists $b\in A$ satisfying $a \circ b = \mathbf{1}$ and $a^2 \circ b = a.$ The element $b$ is unique, it is called the inverse of $a$ and denoted by $a^{-1}$ (cf. \cite[Definition 4.1.2]{CabreraPalaciosBook}). We know from \cite[Theorem 4.1.3]{CabreraPalaciosBook} that an element $a\in A$ is invertible if and only if $U_a$ is a bijective mapping, and in such a case $U_a^{-1} = U_{a^{-1}}$. Recall that for each element $a$ in a JB$^*$-algebra $A$, the symbol $U_a$ will stand for the operator on $A$ given by $U_a (x) = 2(a\circ x)\circ a - a^2 \circ x$ ($x\in A$). Observe that in this setting $Q(a) (x) =  U_a (x^*)$ for all $x\in A$. Therefore $a$ is invertible if and only if $Q(a)$ is invertible. The symbol $A^{-1}$ will stand for the set of all invertible elements in $A$. An additional property of invertible elements asserts that $U_a (b)\in A^{-1}$ if and only if $a,b\in A^{-1}$ (\cite[Theorem 4.1.3$(vi)$]{CabreraPalaciosBook}). \smallskip

An element $u$ in $A$ is called \emph{unitary} if it is invertible in $A$ with inverse $u^*$. This is not misleading use of the term unitary since unitaries in $A$ in this sense are precisely the unitary tripotents of $A$ when the latter is regarded as a JB$^*$-triple {\rm(}cf. \cite[Proposition 4.3]{BraunKaupUpmeier78}, \cite[Theorem 4.2.24, Definition 4.2.25 and Fact 4.2.26]{CabreraPalaciosBook} or the discussion in \cite[Remark 1.1 and Lemma 2.1]{CUetoPeralta2022LMA}{\rm)}.\smallskip

We already know from previous results is that our mapping $\Delta$ preserves (order) minimal and maximal tripotents. In the case of unital JB$^*$-algebras we also have a special subclass of the maximal tripotents, the unitaries, which are also preserved by $\Delta$.

\begin{lem}\label{lem pres unitaries JB-alg}
Let $A$ be a unital JB$^*$-algebra, let $B$ be a JB$^*$-triple, and let $\widetilde{\Delta} :A\to B$ be a {\rm(}non-necessarily linear{\rm)} bijection preserving the truncation of triple products in both directions. Then $B$ is a unital JB$^*$-algebra and $\widetilde{\Delta}$ preserves unitaries in both directions.
\end{lem}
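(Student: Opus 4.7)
The plan is to exploit the characterization of a unitary tripotent $u$ in a JB$^*$-triple $E$ as a tripotent with $E_2(u) = E$, together with two results already proved in the paper: \cref{lem pres tripotents and leq}, which guarantees that $\widetilde{\Delta}$ maps tripotents to tripotents in both directions, and \cref{Peirce2toPeirce2}, which states that $\widetilde{\Delta}(E_2(e)) = F_2(\widetilde{\Delta}(e))$ for every tripotent $e$. Combining these two preservation properties with the surjectivity of $\widetilde{\Delta}$, the entire statement should fall out essentially for free.

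First, I would show that $B$ is a unital JB$^*$-algebra. The unit $\mathbf{1}$ of $A$ is a unitary tripotent, so it is a tripotent with $A_2(\mathbf{1}) = A$. By \cref{lem pres tripotents and leq}, $\widetilde{\Delta}(\mathbf{1})$ is a tripotent of $B$, and by \cref{Peirce2toPeirce2},
\[
B_2(\widetilde{\Delta}(\mathbf{1})) = \widetilde{\Delta}(A_2(\mathbf{1})) = \widetilde{\Delta}(A) = B,
\]
where the last equality uses the surjectivity of $\widetilde{\Delta}$. Thus $\widetilde{\Delta}(\mathbf{1})$ is a unitary tripotent of $B$, which makes $B$ a unital JB$^*$-algebra with unit $\widetilde{\Delta}(\mathbf{1})$ (as recalled in the paragraph before Kaup's Banach--Stone theorem, and invoked again in \cref{subsec: definitions}).

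Second, I would repeat the same argument for an arbitrary unitary $u$ of $A$. Since unitaries in a unital JB$^*$-algebra coincide with unitary tripotents (which is recorded in the discussion preceding this lemma), we have that $u$ is a tripotent of $A$ satisfying $A_2(u) = A$. Exactly as above, \cref{lem pres tripotents and leq} ensures that $\widetilde{\Delta}(u) \in \mathcal{U}(B)$, and \cref{Peirce2toPeirce2} combined with surjectivity gives $B_2(\widetilde{\Delta}(u)) = \widetilde{\Delta}(A) = B$, so $\widetilde{\Delta}(u)$ is a unitary of $B$. The converse direction is obtained immediately by applying the same reasoning to $\widetilde{\Delta}^{-1}: B \to A$, which is again a bijection preserving the truncation of triple products in both directions, and whose domain $B$ has now been shown to be a unital JB$^*$-algebra.

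There is no real obstacle here: the entire argument is a direct application of two results already in the paper, and the only subtle point is the initial observation that the unit of a unital JB$^*$-algebra is a unitary tripotent, so that $B$ inherits its unital JB$^*$-algebra structure via $\widetilde{\Delta}(\mathbf{1})$ before one can even speak of unitaries in $B$.
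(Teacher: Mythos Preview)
Your proof is correct and follows essentially the same approach as the paper: both arguments use \cref{lem pres tripotents and leq} to see that $\widetilde{\Delta}(u)$ is a tripotent, then apply \cref{Peirce2toPeirce2} together with surjectivity to obtain $B_2(\widetilde{\Delta}(u)) = \widetilde{\Delta}(A_2(u)) = \widetilde{\Delta}(A) = B$. The paper's version is slightly terser (it treats an arbitrary unitary $u$ directly, which covers the unit as a special case, and leaves the ``both directions'' part implicit), while you spell out the unit case first and the inverse direction explicitly.
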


\begin{proof} Pick a unitary $u\in A.$ Since $u\in \mathcal{U} (A)$, \cref{lem pres tripotents and leq} assures that $\widetilde{\Delta} (u)$ is a tripotent in $B$. \cref{Peirce2toPeirce2} guarantees that $F_2(\widetilde{\Delta}(u))=\widetilde{\Delta}(A_2(u))=\widetilde{\Delta}(A)=B$, where in the last equality we applied that $\widetilde{\Delta}$ is surjective. Therefore $u$ is a unitary in $B$.
\end{proof}

It follows from the previous lemma that, under our hypotheses, the element $u=\Delta(\mathbf{1})$ is a unitary in $F$, and the latter is a (unital) JBW$^*$-algebra with product $a\circ_u b=\{a,u,b\}$ and involution $a^{*_{u}}=\{u,a,u\}.$ By the uniqueness of the triple product in $F$ (cf. page~\pageref{Kaup Banach-Stone} or \cite[Proposition 5.5]{Kaup_RiemanMap}), the mapping $\Delta: E\to F$ is a unital bijection preserving the truncation of triple products in both directions. We therefore assume, without loss of generality, that $\Delta$ is unital. Henceforth we shall write $A$ and $B$ for $E$ and $F$, respectively to emphasize that we are dealing with JBW$^*$-algebras.\smallskip

\begin{prop}\label{prop basic properties Jordan} Let $A,B$ be JB$^*$-algebras, and let $\Delta:A\to B$ be a {\rm(}non-necessarily linear{\rm)} unital bijection preserving the truncation of triple products in both directions. Then the following statements hold: 
\begin{enumerate}[$(a)$]
\item\label{prop basic properties Jordan 1} $\Delta$ preserves projections;
\item\label{prop basic properties Jordan 2}  If $p_1,\dots,p_n$ are mutually orthogonal projections then $$\Delta(p_1+\dots+p_n)=\Delta(p_1)+\dots+\Delta(p_n);$$
\item\label{prop basic properties Jordan 3}  $\Delta$ preserves orthogonality and the partial order of $\mathcal{P}(A)$ (in both directions);
\item \label{prop basic properties Jordan 4} $\{a,a,a\}=\{a,b,a\}$ if and only if $\Delta(a)^{[3]}= \{ \Delta(a),\Delta(b^*)^*,\Delta(a)\};$     
\item \label{prop basic properties Jordan 5} $\Delta(A_{sa}^{-1})\subseteq B_{sa}$, $\Delta^{-1} (B_{sa}^{-1}) \subseteq A_{sa}$;

\item\label{prop basic properties Jordan 6} $\Delta $ preserves the truncation of invertible self-adjoint elements in both directions;

\item\label{new f-g 3.13} For all $a,b\in A_{sa}^{-1}$, we have $\Delta \{a,b,a\} = \{\Delta(a),\Delta(b),\Delta(a)\}$.
\end{enumerate}
\end{prop}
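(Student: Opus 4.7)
Parts (a)--(c) should follow almost immediately from the tripotent-preservation results already established. For (a) I would use that the projections of $A$ are precisely those tripotents $p$ satisfying $p\le\mathbf{1}$, combined with the fact that $\Delta$ preserves tripotents and the partial order among them (\cref{lem pres tripotents and leq}) and fixes $\mathbf{1}$ (by the normalization made just before the statement). Part (b) will then be an instance of \cref{lem orthogonal tripotents} applied to projections, and (c) will follow by combining \cref{lem pres tripotents and leq,lem orthogonal tripotents} (applied also to $\Delta^{-1}$), noting that the relations $p\perp q$ and $p\le q$ on $\mathcal{P}(A)$ coincide with the corresponding relations when $p,q$ are viewed as tripotents.

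For (d), the key observation will be that in the unital JB$^*$-algebra $A$ one has $b = \{\mathbf{1}, b^*, \mathbf{1}\} = Q(\mathbf{1})(b^*)$, so the identity $\{a,a,a\} = \{a,b,a\}$ can be rewritten as $a^{[3]} = \{a, Q(\mathbf{1})(b^*), a\}$, which is precisely the assertion that $a$ is a truncation of the triple product $Q(\mathbf{1})(b^*)$. Applying the preservation hypothesis \eqref{char trunc preservers} together with $\Delta(\mathbf{1}) = \mathbf{1}$ will translate this into $\Delta(a)^{[3]} = \{\Delta(a), Q(\mathbf{1})(\Delta(b^*)), \Delta(a)\} = \{\Delta(a), \Delta(b^*)^*, \Delta(a)\}$, which is (d).

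For (e) my plan is to apply the equivalence just proved to the unital bijection $\Delta^{-1}:B\to A$ (which enjoys the same preservation property) with the choices $y = \Delta(a)$ and $w = \Delta(a)$. The ``$B$-side'' equation will then be trivially true, forcing the ``$A$-side'' $a^{[3]} = \{a, \Delta^{-1}(\Delta(a)^*)^*, a\}$, i.e., $Q(a)\bigl(a - \Delta^{-1}(\Delta(a)^*)^*\bigr) = 0$. Since $a\in A^{-1}$, the operator $Q(a)$ (which acts as $Q(a)(x) = U_a(x^*)$, with both $U_a$ and the involution bijective, cf.\ \cite[Theorem 4.1.3]{CabreraPalaciosBook}) is a bijection on $A$; this will force $a = \Delta^{-1}(\Delta(a)^*)^*$, and then $a=a^*$ gives $\Delta(a)^* = \Delta(a)$. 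The inclusion $\Delta^{-1}(B_{sa}^{-1})\subseteq A_{sa}$ will follow by the same argument applied to $\Delta^{-1}$. Part (f) is then a direct consequence of (d) and (e): for $a, b \in A_{sa}^{-1}$ we have $b=b^*$ and $\Delta(b)^* = \Delta(b)$, so the right-hand side in (d) reduces to $\{\Delta(a), \Delta(b), \Delta(a)\}$.

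Finally, for (g), I will set $c := \{a,b,a\} = Q(a)(b)$ and note that, as $U_a$ preserves invertibility, $c\in A_{sa}^{-1}$. The tautological identity $c^{[3]} = \{c, Q(a)(b), c\}$ expresses that $c$ is a truncation of the triple product $Q(a)(b)$, so by the preservation hypothesis $\Delta(c)$ will be a truncation of $\{\Delta(a), \Delta(b), \Delta(a)\}$, equivalently $Q(\Delta(c))\bigl(\Delta(c) - \{\Delta(a), \Delta(b), \Delta(a)\}\bigr) = 0$. The decisive step, and the hard part of the argument, will be to show that $Q(\Delta(c))$ is injective on $B$; my plan is to exploit invertibility: $c\in A^{-1}$ makes $Q(c)$ bijective and hence ${}^{\perp_q}\{c\} = \{0\}$, and \cref{lem preserve annihil}$(iv)$ will then transfer this to ${}^{\perp_q}\{\Delta(c)\} = \{0\}$, which is exactly the injectivity of $Q(\Delta(c))$. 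The equation above will then force $\Delta(c) = \{\Delta(a), \Delta(b), \Delta(a)\}$, completing the proof.
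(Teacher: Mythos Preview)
Your proposal is correct and follows essentially the same route as the paper's proof. The only noticeable difference is in (e): the paper shows $\Delta(a)=\Delta(a)^*$ by first deducing from \cref{lem preserve annihil}\cref{lem preserve annihil 1} that $Q(\Delta(a))$ is injective and then comparing $\Delta(a)^{[3]}=Q(\Delta(a))(\Delta(a))$ with $Q(\Delta(a))(\Delta(a)^*)$ (obtained from the truncation $a^{[3]}=\{a,Q(\mathbf{1})(a),a\}$), whereas you instead pull the equation back through $\Delta^{-1}$ and use the bijectivity of $Q(a)$ on the $A$-side; both arguments are equally short and yield the same conclusion.
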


\begin{proof}
$(\!\!\!\cref{prop basic properties Jordan 1}) $ Fix $p\in \mathcal{P}(A)\subset \mathcal{U} (A).$ Since $p\leq \mathbf{1}$, \cref{lem pres tripotents and leq} proves that $\Delta(p)$ is a tripotent in $B$ and $\Delta(p)\leq \mathbf{1} = \Delta(\mathbf{1}).$ Thus, $\Delta(p)$ is a projection in $B$.\smallskip

$(\!\!\!\cref{prop basic properties Jordan 2})$ and $(\!\!\!\cref{prop basic properties Jordan 3})$ are clear from \cref{lem orthogonal tripotents}, \cref{lem pres tripotents and leq} and the fact that the partial ordering on $\mathcal{U}(A)$ extends the partial ordering among projections.\smallskip

$(\!\!\!\cref{prop basic properties Jordan 4})$ Suppose that  $\{a,a,a\}=\{a,b,a\}$. By observing that $a$ is a truncation of $Q(\mathbf{1}) (b^*)$ and $\Delta (\mathbf{1}) = \mathbf{1}$, we get $$\{\Delta(a),\Delta(a),\Delta(a)\} = \{\Delta(a),\{\Delta(\mathbf{1}),\Delta(b),\Delta(\mathbf{1})\}, \Delta (a)\} = \{ \Delta(a),\Delta(b^*)^*,\Delta(a)\}$$ The reciprocal implication follows similarly.\smallskip

$(\!\!\!\cref{prop basic properties Jordan 5})$ Take $a=a^*$ invertible (equivalently, $Q(a)$ invertible). It follows from the invertibility of $Q(a)$ and \cref{lem preserve annihil}$(ii)$ that $Q(\Delta(a))$ is injective. The identity $a^{[3]}=\{a,\{\mathbf{1},a,\mathbf{1}\},a\}$ implies that  $Q(\Delta(a))(\Delta(a))=\Delta(a)^{[3]}=Q(\Delta(a))(\Delta(a)^*).$ Thus $\Delta(a)=\Delta(a)^*.$ Similar arguments prove $\Delta^{-1} (B_{sa}^{-1}) \subseteq A_{sa}$.\smallskip

$(\!\!\!\cref{prop basic properties Jordan 6})$ It is evident from $(\!\!\!\cref{prop basic properties Jordan 4})$ and $(\!\!\!\cref{prop basic properties Jordan 5})$.\smallskip

$(\!\!\!\cref{new f-g 3.13})$ Take $a,b\in A_{sa}^{-1}$. The element $U_a(b) = Q(a) (b)$ is invertible and self-adjoint. Thus, as in the proof of $(\!\!\!\cref{prop basic properties Jordan 5})$, $Q(\Delta(U_a(b))) = Q(\Delta(Q(a)(b)))$ is injective. Since $U_a(b) = Q(a)(b)$ is a truncation of itself, we deduce from the hypotheses on $\Delta$ that 
$$\{\Delta(Q(a)(b)),\Delta(Q(a)(b)),\Delta(Q(a)(b))\} = \{\Delta(U_a(b)), \{\Delta(a), \Delta(b), \Delta(a)\},\Delta(U_a(b))\}$$ equivalently, $$Q(\Delta(Q(a)(b)))\Big(\Delta(Q(a)(b))- \{\Delta(a), \Delta(b), \Delta(a)\}  \Big) = 0,$$ and $\Delta(Q(a)(b))= \{\Delta(a), \Delta(b), \Delta(a)\},$ which gives the desired statement.  
\end{proof}

There is a geometric particularity of atomic JBW$^*$-triples which will be applied in our next arguments. For each minimal tripotent $e$ in a JBW$^*$-triple, $M,$ there exists a unique extreme point of the closed unit ball of $M_*$, $\varphi_e$, at which $e$ attains its norm and the corresponding Peirce-2 projection writes in the form $P_2 (e) (x) = \varphi_e(x) e$ for all $x\in M$ (cf. \cite[Proposition 4]{FriedRusso_Predual}).  It is proved in the just quoted reference that the mapping $$\mathcal{U}_{min} (M)\to \partial_{e} (\mathcal{B}_{M_*}), \ \  e\mapsto \varphi_e $$ is a bijection from the set of minimal tripotents in $M$ onto the set of all extreme points of the closed unit ball of $M_*$, which are also called pure atoms of $M$. When $E$ is an atomic JBW$^*$, its predual coincides with the norm closure of the the linear span of its extreme points (cf. \cite[Theorem 1]{FriedRusso_Predual}). Consequently, the extreme points of the closed unit ball of $E_*$ (i.e., the pure atoms of $E$) separate the points of $E$ in this latter case.\smallskip

We present next an identity principle which will be required in the proof of our main result. 

\begin{thrm}\label{thm of extension T and Delta}
Let $E$ and $F$ be two atomic JBW$^*$-triples. Suppose $\Delta: E \to F$ is a {\rm(}non-necessarily linear{\rm)} bijection preserving the truncation of triple products in both directions. We shall additionally assume that $\Delta(\alpha e)=\alpha \Delta(e)$ for every  minimal tripotent $e\in E$ and all $\alpha \in \CC$. Let $T:E\to F$ be a triple isomorphism satisfying $T(e)=\Delta(e)$ for every minimal tripotent $e\in E$. Then $\Delta= T$ is a linear triple isomorphism. \smallskip

If we assume that $\Delta(\alpha e)=\overline{\alpha} \Delta(e)$ for every  minimal tripotent $e\in E$ and all $\alpha \in \CC$, then $\Delta$ is a conjugate-linear triple isomorphism. 
\end{thrm}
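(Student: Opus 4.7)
The plan is to reduce the statement to the identity $\Delta(y)^{[3]} = T(y)^{[3]}$ for every $y \in E$, and then to invoke the injectivity of the cube map $z \mapsto z^{[3]}$ on any JB$^{*}$-triple. This injectivity follows from the commutative C$^{*}$-algebra realisation of the subtriple $E_{z}$ (cf.\ \cite[Corollary 1.15]{Kaup_RiemanMap}): if $w_{1}^{[3]} = w_{2}^{[3]} =: u$, then each $w_{i}$ is the unique positive cube root of $u$ inside the smaller subtriple $E_{u} \subseteq E_{w_{i}}$, forcing $w_{1} = w_{2}$. Since the pure atoms $\varphi_{v}$ attached to the minimal tripotents $v \in F$ separate points of the atomic JBW$^{*}$-triple $F$, and $\Delta$ bijects $\mathcal{U}_{min}(E)$ onto $\mathcal{U}_{min}(F)$ with $\Delta(e) = T(e)$ (by \cref{lem pres tripotents and leq} and the hypothesis), the identity $\Delta(y)^{[3]} = T(y)^{[3]}$ reduces further to the pointwise equality $\varphi_{\Delta(e)}(\Delta(y)^{[3]}) = \varphi_{\Delta(e)}(T(y)^{[3]})$ for every minimal tripotent $e \in E$.

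On the $T$-side, the triple isomorphism property gives $T(y^{[3]}) = T(y)^{[3]}$ and the intertwining $T \circ P_{2}(e) = P_{2}(T(e)) \circ T$, so $\varphi_{T(e)}(T(y)^{[3]}) = \varphi_{e}(y^{[3]})$. On the $\Delta$-side, the starting observation is that $P_{2}(e)(y^{[3]}) = \varphi_{e}(y^{[3]}) e$ is a truncation of $y^{[3]} = \{y,y,y\}$ by \cref{lem charact truncation}$(c)$. Applying the preservation hypothesis with $(b,c) = (y,y)$ and the scalar-multiplicativity $\Delta(\alpha e) = \alpha \Delta(e)$, the image $\varphi_{e}(y^{[3]}) \Delta(e)$ is a truncation of $\{\Delta(y),\Delta(y),\Delta(y)\} = \Delta(y)^{[3]}$. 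A second application of \cref{lem charact truncation}$(c)$ at the minimal tripotent $\Delta(e)$ then forces $\varphi_{\Delta(e)}(\Delta(y)^{[3]}) = \varphi_{e}(y^{[3]})$ whenever $\varphi_{e}(y^{[3]}) \neq 0$; the case $\varphi_{e}(y^{[3]}) = 0$ is recovered by contraposition, applying the same argument to $\Delta^{-1}$ (which inherits both the preservation and the scalar-multiplicativity hypotheses) to rule out a non-zero value of $\varphi_{\Delta(e)}(\Delta(y)^{[3]})$. Combining the two sides with the identity $T(e) = \Delta(e)$ gives the required equality, hence $\Delta(y)^{[3]} = T(y)^{[3]}$ and, by cube injectivity, $\Delta = T$.

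For the conjugate-linear statement, the same scheme runs with two matching complex conjugations: $\Delta(\alpha e) = \overline{\alpha}\,\Delta(e)$ leads to $\varphi_{\Delta(e)}(\Delta(y)^{[3]}) = \overline{\varphi_{e}(y^{[3]})}$, while conjugate-linearity of the triple isomorphism $T$ (which still satisfies $T\{a,b,c\} = \{T(a),T(b),T(c)\}$) produces $\varphi_{T(e)}(T(y)^{[3]}) = \overline{\varphi_{e}(y^{[3]})}$; the two expressions agree, and cube injectivity again yields $\Delta = T$. The main technical obstacle is that the preservation hypothesis constrains only truncations of Jordan triple products of the form $\{b,c,b\} = Q(b)(c)$, so a direct propagation of pointwise information from $y$ to $\Delta(y)$ is unavailable; the remedy exploits the tautology $y^{[3]} = Q(y)(y)$ to let the hypothesis transfer data about cubes, which cube injectivity then upgrades to pointwise equality.
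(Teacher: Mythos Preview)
Your argument is correct and takes a genuinely different route from the paper's proof. The paper first reduces to Peirce-2 subspaces $E_2(v)$ of maximal tripotents, proves $T(v)=\Delta(v)$ there, and then works inside the unital JBW$^*$-algebra $E_2(v)$ via \cref{prop basic properties Jordan}$(d)$ (which needs the involution $*_v$) to show $\Psi=T^{-1}\Delta$ fixes each element by testing pure atoms against $a$ directly. You instead bypass the maximal-tripotent reduction and the involution machinery entirely: the tautology $y^{[3]}=Q(y)(y)$ lets you apply the preservation hypothesis straight away, pure atoms separate the cubes $\Delta(y)^{[3]}$ and $T(y)^{[3]}$, and injectivity of $z\mapsto z^{[3]}$ finishes. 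Your approach is shorter and more intrinsic to the triple structure; the paper's approach, while longer, makes the JBW$^*$-algebra layer explicit and reuses \cref{prop basic properties Jordan}, which is developed anyway for later sections. One small remark: your justification of cube injectivity is terse but sound---the point that $w_i\in E_u$ follows from the triple functional calculus, since $u^{[1/3]}$ computed in $E_u$ already satisfies $(u^{[1/3]})^{[3]}=u$ and $E_u=E_{w_i}$ because $t\mapsto t^3$ generates the same commutative $C_0$-algebra as $t\mapsto t$ on $(0,\|w_i\|]$.
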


\begin{proof} 	Since every element in $E$ is contained in the Peirce-2 subspace associated with a maximal tripotent in $E$  (see \cite[Lemma 3.12]{Horn_MathScand_1987}), it suffices to prove that $T|_{E_2(v)}=\Delta|_{E_2(v)}$ for every maximal tripotent $v\in E$.\smallskip
	
Pick a maximal tripotent $v\in E$. \cref{Peirce2toPeirce2} assures that  the restricted mapping $\Delta|_{E_2(v)}:E_2(v)\to F_2(\Delta(v))$ is a bijection preserving the truncation of triple products in both directions, which is also unital in this case.\smallskip

We claim that $T(v) = \Delta (v)$. Namely, observe that both elements are tripotents in the atomic JBW$^*$-triple $F$. Since every tripotent in $F$ is the supremum of all minimal tripotents bellow it, and $\Delta$ preserves tripotents and order among them in both directions (cf. \cref{lem pres tripotents and leq}), to show that $T(v) = \Delta(v)$ it suffices to prove that for each minimal tripotent $\tilde{w}$ in $F$ we have $\tilde{w}\leq \Delta(v)$ if and only if $\tilde {w}\leq T(v)$. For any minimal tripotent $\tilde{w}$ in $F$ there exists a minimal tripotent $e\in E$ with $\Delta(e) =\tilde {w}$ (cf. \cref{lem pres tripotents and leq}). By hypotheses $T(e) = \Delta(e)$. It is therefore easy to see, by a new application of \cref{lem pres tripotents and leq} and the fact that $T$ is a triple isomorphism, that $\tilde{w} = \Delta(e) \leq \Delta(v) \Leftrightarrow e\leq v \Leftrightarrow T(e) = \tilde{w}\leq T(v)$, which concludes the proof of the claim.\smallskip

Therefore, $\Delta|_{E_2(v)},T|_{E_2(v)}:E_2(v)\to F_2(\Delta(v))$ are unital bijections, $\Delta|_{E_2(v)}$ preserves the truncation of triple products in both directions, $T|_{E_2(v)}$ is a triple isomorphism, and $T(e) = \Delta (e)$ for all minimal tripotent in $E_2(v)$.\smallskip

We shall finally prove that $\Psi = T|_{E_2(v)}^{-1} \Delta|_{E_2(v)} = Id_{E_2(v)}$. Note that $\Psi$ enjoys the same properties of $\Delta$ and $\Psi (e) =e$, for every minimal tripotent $e\in E_2(v)$. The involution on the JB$^*$-algebra $E_2(v)$ will be simply denoted by ``$*$''. Fix an arbitrary $a\in E_2(v)$, a minimal tripotent $e\in E_2(v)$, and the supporting pure atom $\varphi_e$ in the extreme points of the closed unit ball of $E_*$ (\cite[Proposition 4]{FriedRusso_Predual}). Observe that $\varphi_e = \varphi_e P_2(v)$ also is a pure atom of ${E_2(v)}$ supporting $v$ (cf. \cite[Proposition 1]{FriedRusso_Predual}), and $P_2(e)(a) = \varphi_e(a)e.$ We split the argument into two cases:\smallskip

\textit{Case 1.} If $\varphi_e(a)\neq 0$ then $\varphi_e(a) e$ is a truncation of $a$, more concretely, $$\varphi_e(a)^2 \overline{\varphi_e(a)} e= \{\varphi_e(a) e, \varphi_e(a) e, \varphi_e(a) e\} = \{\varphi_e(a)(e), a, \varphi_e(a)(e)\}.$$ We deduce from \cref{prop basic properties Jordan}$(d)$ and the properties of $\Psi$ that $$\begin{aligned} \varphi_e(a)^2 \overline{\varphi_e(a)} e &= \varphi_e(a)^2 \overline{\varphi_e(a)} \Psi(e) =
\{\Psi(\varphi_e(a)e),\Psi(\varphi_e(a)e),\Psi(\varphi_e(a)e)\} \\ 
&=\{\Psi(\varphi_e(a)e),\Psi(a^*)^*,\Psi(\varphi_e(a)e)\} = \varphi_e(a)^2 \{\Psi(e), \Psi(a^*)^*, \Psi(e)\}\\
&= \varphi_e(a)^2 \{e, \Psi(a^*)^*, e\}= \varphi_e(a)^2 \overline{\varphi_e(\Psi(a^*)^*)} e,
\end{aligned} $$  which implies that 
$$\varphi_e (a) =\varphi_e(\Psi(a^*)^*)\Leftrightarrow \varphi_e(a-\Psi(a^*)^*)=0.$$

\textit{Case 2.} If $\varphi_e (a)= 0.$ This means that $P_2(e)(a)=0,$ equivalently, $\{e,a,e\} = 0\Leftrightarrow \{e^*,a^*,e^*\} =0$, and thus $a^*\in (E_2(v))_1(e^*)\oplus (E_2(v))_0(e^*).$ By \cref{lem preserve annihil} or \cref{c preservation Perice 1 + 0}, we get $$\Psi(a^*)\in (E_2(v))_1(\Psi(e^*))\oplus (E_2(v))_0(\Psi(e^*)) = (E_2(v))_1(e^*)\oplus (E_2(v))_0(e^*),$$ or equivalently, $$\{e^*,\Psi(a^*),e^*\}=0 \Leftrightarrow \{e,\Psi(a^*)^*,e\}=0 \Leftrightarrow  \varphi_e(\Psi(a^*)^*-a)=0.$$

In both cases we arrived to  $\varphi_e(a-\Psi(a^*)^*)=0,$ and this holds 
for every minimal tripotent $e\in E_2(v).$ Therefore $\Psi(a^*)^*=a$, since the pure atoms of $E_2(v)$ separate the points of this space.  The arbitrariness of $a\in E_2(v)$ implies that $\Psi_{|E_2(v)}=Id_{E_2(v)},$ and by the comments at the beginning of the proof we have $\Delta=T.$
\end{proof}

It follows from the above identity principle that it is necessary to determine the form of the maps $f_e$ associated with each minimal tripotent $e$ defined in page~\pageref{def of fe}.

\subsection{Preservers of the truncation of triple products on spin factors}\label{subsec:spin factors} \ \smallskip

In this subsection we focus on a very special class of atomic JBW$^*$-algebras, the 3-dimensional spin factor $S_{2}(\CC)$ of all symmetric 2 by 2 complex matrices. This spin factor plays a central role in classic and recent results (see, for instance, \cite[Sect.7]{Alfsen_Shultz_Stomer_AdvMath_1978}, \cite[page 82]{FriedRusso_Predual} or \cite[Sect.3]{Kal_Peralta_Ann_Math_Phys_2021}). \smallskip

Let us give some hints about our interest in this special Cartan factor. It is known that if $e_1$ and $e_2$ are two orthogonal minimal tripotents in a JB$^*$-triple $E$, the Peirce-2 subspace $E_2(e_1+e_2)$ is isometrically triple isomorphic to $\mathbb{C}\oplus^{\ell_{\infty}} \mathbb{C}$ or to a spin factor (cf. \cite[Lemma 3.6]{Kal_Peralta_Ann_Math_Phys_2021}). If $E$ is a Cartan factor, the case of $\mathbb{C}\oplus^{\ell_{\infty}} \mathbb{C}$ is simply impossible (compare, for example, \cite[Lemma 3.10]{Polo_Peralta_AdvMath_2018}). \smallskip

Let $\Delta: E\to F$ be a bijection preserving the truncation of triple products in both directions between two atomic JBW$^*$-triples. Suppose we can find two orthogonal minimal tripotents $e_{1}$ and $e_{2}$ both contained in the same Cartan factor inside $E$. As commented above  $E_2(e_1+e_2)$ is a Spin factor. Note that, by \cref{Peirce2toPeirce2} and \cref{lem orthogonal tripotents}, $\Delta$ maps $E_{2}(e_1+e_2)$ onto $F_{2} (\Delta(e_{1}) + \Delta(e_{2})),$ which must be also a spin factor as we shall see next.\smallskip

Let us recall some results on the structure of the lattice of tripotents in a spin factor borrowed from \cite{Fried_Peralta_Ann_Math_Phys_2022, Kal_Peralta_Ann_Math_Phys_2021}. A \emph{spin factor}, is complex Hilbert spaces $X$ provided with a conjugation (i.e. conjugate-linear isometry of period-2) $x\mapsto \overline{x},$ whose triple product
and norm are defined by \begin{equation}\label{eq spin product}
	\{x, y, z\} = \langle x|y\rangle z + \langle z|y\rangle  x -\langle x|\overline{z}\rangle \overline{y},
\end{equation} and \begin{equation}\label{eq spin norm} \|x\|^2 = \langle x|x\rangle  + \sqrt{\langle x|x\rangle ^2 -|
		\langle x|\overline{x}\rangle  |^2},
\end{equation} respectively. We shall always assume that dim$(X)\geq 3$, since in dimensions $1$ and $2$ we have $X = \mathbb{C}$ and $X= \mathbb{C}\oplus^{\ell_{\infty}} \mathbb{C}$, respectively, and the latter is not a factor.\smallskip

The real Hilbert subspace 
$X^{-} =\{x\in X : \overline{x} =x\}$ serves to understand all complete or maximal tripotents of $X$. Observe first that  $\|x\|= \|x\|_2 = \sqrt{\langle x|x\rangle}$ for all $x\in X^{-}$ and  $\langle a | b\rangle = \langle b| a\rangle \in \mathbb{R}$ for all $a,b\in X^{-}$.
We further have $X= X^{-} \oplus i X^{-}$. Every maximal tripotent on $X$ is of the form $u=\gamma x$ with $x$ in the unit sphere of $X$ and $\gamma$ a unitary in $\mathbb{C}$. Given two norm-one elements $x,y\in X^{-}$ with $\langle x| y\rangle =0,$ the elements $e_1= \frac{x+iy}{2}$ and $e_2= \frac{x-iy}{2}$ are two mutually orthogonal minimal tripotents in $X$ with $x = e_1+e_2$. It can be also checked that  $$\begin{aligned}X_1(e_1) =X_1(e_2)=& \left\{{c+id}: c,d\in X^{-}, \ c,d \perp_2 x,y \right\}:=\{x,y\}^{\perp_2}_X, \\ X_2 (e_1)&= \mathbb{C} e_1, \hbox{ and } X_0 (e_1)= \mathbb{C} e_2,
\end{aligned}$$ where  $\perp_2$ is used to denote  orthogonality in the Hilbert space $(X,\langle\cdot|\cdot\rangle)$. In particular $X$ has rank-2. Every minimal tripotent in $X$ can be obtained in the way just commented. Since dim$(X)\geq 3$, we can find three mutually orthogonal (in the Hilbert sense) norm-one elements $a,b,c$ in $X^{-}$. It is known, and easy to check, that the JB$^*$-subtriple of $X$ generated by $\{a,b,c\}$, equivalently, by $\{c, e_1=\frac{a+ib}{2}, e_2=\frac{a-ib}{2}\}$ is a 3-dimensional spin factor JB$^*$-triple isomorphic to $S_2$ by just identifying $c \cong \left( \begin{array}{cc}
0 & 1 \\
1 & 0
\end{array}\right)$, $e_1 \cong \left( \begin{array}{cc}
1 & 0 \\
0 & 0
\end{array}\right),$ and $e_2 \cong\left( \begin{array}{cc}
0 & 0 \\
0 & 1
\end{array}\right)$. In case that we can find an orthonormal system of the form $\{a,b,c,d\}$ in $X^{-}$, it is also known that  the JB$^*$-subtriple of $X$ generated by $\{a,b,c,d\}$, equivalently, by $\{ e_1=\frac{a+ib}{2}, e_2=\frac{a-ib}{2},e_3=\frac{c+id}{2}, e_4=\frac{c-id}{2}\}$ is triple isomorphic to $M_2$ by just identifying $e_1 \cong \left( \begin{array}{cc}
1 & 0 \\
0 & 0
\end{array}\right),$  $e_2 \cong \left( \begin{array}{cc}
0 & 0 \\
0 & 1
\end{array}\right)$,  $e_3 \cong \left( \begin{array}{cc}
0 & 1 \\
0 & 0
\end{array}\right)$, and $e_4 \cong\left( \begin{array}{cc}
0 & 0 \\
1 & 0
\end{array}\right)$. \smallskip

Let us observe that for each minimal tripotent $e$ in a spin factor $X$ with dim$(X)\geq 4$, its orthogonal complement reduces to $\mathbb{C} \overline{e}= X_0(e)$ and we further have \begin{equation}\label{equation fla for Peirce 02} \mathbb{C} \overline{e}\oplus \mathbb{C} {e} =  X_0(e) \oplus X_2 (e) =  \bigcap\left\{ X_{1} (v) : v\in X_1(e) \hbox{ minimal in } X \right\}.
\end{equation} We note that the second equality in \eqref{equation fla for Peirce 02} does not hold for $S_2$. \smallskip

In the next proposition we elaborate the necessary observations to reduce our study to spin factors of the form $S_2$ or $M_2$. 

\begin{prop}\label{r preservers of truncations between spin factors}
Let $\Delta: X\to Y$ be a {\rm(}non-necessarily linear{\rm)} bijection preserving the truncation of triple products in both directions between two spin factors. We shall employ the same symbols $\langle \cdot| \cdot\rangle$ and $\overline{\ \cdot \ }$ to denote the inner products and involutions of $X$ and $Y$. The following properties hold:
\begin{enumerate}[$(a)$]
\item For each $\gamma\in \mathbb{T}$ and each norm-one $x\in X^{-}$ there exist $\gamma{'}\in \mathbb{T}$ and a norm-one $y\in Y^{-}$ such that $\Delta (\gamma x) =\gamma{'} y$. 
\item  For each orthonormal set $\{a,b\}$ in $X^{-}$ there exists a unique orthonormal set $\{a',b'\}$ in $Y^{-}$ satisfying $\Delta\left(\frac{a+ib}{2}\right)= \frac{a'+ib'}{2}$. Moreover, for $e_1=\frac{a+ib}{2}$, $X_1 (e_1)\oplus X_0(e_1) = \{a,b\}_{X}^{\perp_2} \oplus \mathbb{C} \frac{a-ib}{2}$, and hence $$\Delta\left( \{a,b\}^{\perp_2}_{X} \oplus \mathbb{C} \frac{a-ib}{2} \right)  = \{a',b'\}_{Y}^{\perp_2} \oplus \mathbb{C} \frac{a'-ib'}{2}.$$
\item For orthonormal set $\{a,b\}$ in $X^{-}$ with $e_1 = \frac{a+ib}{2}$,  $\Delta(e_1)=\Delta\left(\frac{a+ib}{2}\right)= \frac{a'+ib'}{2},$ where $\{a',b'\}$ is  an orthonormal set in $Y^{-}$, we have $$\begin{aligned}
\Delta\left(X_1(e_1) \right)=\Delta (\{a,b\}_{X}^{\perp_2}) &= \{a',b'\}_{Y}^{\perp_2}=Y_1(\Delta(e_1))\\ &=\left\{\Delta\left(\frac{a+ib}{2}\right), \Delta\left(\frac{a-ib}{2}\right)\right\}^{\perp_2}_{Y} \\
\hbox{ and } \Delta\left(X_0(e_1)\right) = Y_0 (\Delta(&e_1)).
\end{aligned}$$     
\item For each orthonormal system $\{a,b,c\}$ in $X^{-}$ one of the next statements holds: 
\begin{enumerate}[$(d.1)$]
	\item $\Delta$ maps the JB$^*$-subtriple $S\cong S_2$ of $X$ generated by $\{a,b,c\}$ onto a JB$^*$-subtriple of $Y$ isometrically isomorphic to $S_2$;
	\item We can find an orthonormal system of the form $\{a,b,c,d\}$ in $X^{-}$ such that $\Delta$ maps the JB$^*$-subtriple $M\cong M_2$ of $X$ generated by $\{a,b,c,d\}$ onto a JB$^*$-subtriple of $Y$ isometrically isomorphic to $M_2$.
\end{enumerate} Consequently, given any two minimal tripotents $e,v$ in $X$, there exist JB$^*$-subtriples $M\subseteq X$ and $N\subseteq Y$ satisfying $e,v\in M $, $\Delta(M)= N$, and $M,N\cong M_2$ or $M,N\cong S_2$.   
\end{enumerate}	
\end{prop}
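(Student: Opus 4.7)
The plan is to treat parts (a), (b), and (c) as quick corollaries of earlier results in \cref{sec: preservers of the trunc of triple products} and then tackle (d) via a case split based on the dimension of $X^-$. For part (a), since $x \in X^-$ of norm one is a maximal (equivalently unitary) tripotent of the spin factor $X$, so is $\gamma x$ for $\gamma \in \mathbb{T}$; \cref{lem pres tripotents and leq} then transports this property to $Y$, and the description of maximal tripotents in spin factors recalled in the excerpt produces the desired $\gamma' \in \mathbb{T}$ and $y \in Y^-$ of norm one. For part (b), $e_1 = \frac{a+ib}{2}$ is a minimal tripotent, so by \cref{lem pres tripotents and leq} $\Delta(e_1)$ is a minimal tripotent in $Y$, which must have the form $\frac{a'+ib'}{2}$ with $\{a',b'\}$ orthonormal in $Y^-$; uniqueness is immediate from the real-linear decomposition $Y = Y^- \oplus iY^-$, and the Peirce-$(1+0)$ preservation is exactly \cref{c preservation Perice 1 + 0}.

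For part (c), the key is the spin-factor identity $X_0(e_1) = \mathbb{C}\overline{e_1} = \mathbb{C}e_2 = X_2(e_2)$. Then \cref{Peirce2toPeirce2} applied to $e_2$ yields $\Delta(X_0(e_1)) = \mathbb{C}\Delta(e_2)$, and \cref{lem orthogonal tripotents} places $\Delta(e_2)$ inside the one-dimensional space $Y_0(\Delta(e_1))$, so $\Delta(X_0(e_1)) = Y_0(\Delta(e_1))$. For the Peirce-$1$ piece I would use $X_1(e_1) = X_1(e_2)$ to write
\[
X_1(e_1) = \bigl( X_1(e_1) \oplus X_0(e_1) \bigr) \cap \bigl( X_1(e_2) \oplus X_0(e_2) \bigr),
\]
then apply $\Delta$ (using that bijections preserve set-theoretic intersections) and invoke \cref{c preservation Perice 1 + 0} on each factor to conclude $\Delta(X_1(e_1)) = Y_1(\Delta(e_1))$.

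For part (d), set $e_2 = \frac{a-ib}{2}$; parts (a)--(c) give $\Delta(e_1) = \frac{a'+ib'}{2}$ with $\{a',b'\} \subseteq Y^-$ orthonormal, $\Delta(e_2) \in \mathbb{C}\overline{\Delta(e_1)}$, and $\Delta(c) = \gamma' \tilde{c}$ with $\tilde{c} \in Y^-$ of norm one lying in $Y_1(\Delta(e_1))$, so that $\{a',b',\tilde{c}\}$ is an orthonormal triple in $Y^-$. I then split according to the dimension of $X^-$: if $\dim X^- = 3$ (equivalently $X = S_2$), the subtriple generated by $\{a,b,c\}$ is all of $X$, and the symmetric application of the whole analysis to $\Delta^{-1}$ forces $\dim Y^- = 3$ as well, so $Y = S_2$ and $\Delta(S) = Y$ is of type $S_2$, giving (d.1). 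Otherwise $\dim X^- \geq 4$; pick $d \in X^-$ orthonormal to $\{a,b,c\}$, form $e_3 = \frac{c+id}{2}$ and $e_4 = \frac{c-id}{2}$, and let $M$ denote the subtriple generated by $\{a,b,c,d\}$, isomorphic to $M_2$ and containing the grid $\{e_1,e_2,e_3,e_4\}$ of two orthogonal pairs of pairwise collinear minimal tripotents. By (b), \cref{lem orthogonal tripotents}, and part (c), the four images $\Delta(e_i)$ form an analogous grid in $Y$ generating an $M_2$-subtriple $N$, and one then shows $\Delta(M) = N$ by symmetric application of (a)--(c) to $\Delta^{-1}$, establishing (d.2). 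The final consequence for any two minimal tripotents $e,v$ in $X$ follows by choosing an orthonormal set in $X^-$ that encodes both $e$ and $v$ as $\frac{u+iv}{2}$-combinations.

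The principal obstacle is establishing $\Delta(M) = N$ in (d.2) (and the analogous $\dim Y^- = 3$ conclusion in (d.1)): since $\Delta$ is not assumed linear, pinning down the image of a $4$-dimensional subtriple from the images of its four grid tripotents requires combining the structural results of (a)--(c) with the characterization of $M$ through its grid of collinear pairs and with the symmetric use of these tools on $\Delta^{-1}$.
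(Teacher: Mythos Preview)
Your treatment of $(a)$, $(b)$, and $(c)$ is correct and matches the paper's proof; in particular, your intersection argument for $(c)$ is exactly the idea the paper uses (phrased there elementwise).

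The gap is in $(d)$, specifically in showing $\Delta(M)=N$ when $\dim X^{-}\geq 5$. Knowing that $\Delta(e_1),\Delta(e_2),\Delta(e_3),\Delta(e_4)$ form a quadrangle in $Y$ tells you nothing about $\Delta$ on the rest of $M$, because $\Delta$ is not linear and $M$ is a \emph{linear span} of those four tripotents. Bijections preserve intersections of sets, but not sums of subspaces, so ``symmetric application of $(a)$--$(c)$ to $\Delta^{-1}$'' cannot by itself pin down $\Delta(M)$. The paper's key manoeuvre, which your sketch lacks, is to express $M$ not as a span but as an \emph{intersection of Peirce-$1$ subspaces}: one extends $\{a,b,c,d\}$ to an orthonormal basis $\{a_1,b_1,\ldots,a_n,b_n\}$ (or $\{a_1,b_1,\ldots\}\cup\{c\}$ in odd dimension) of $X^{-}$, sets $e_j=\tfrac{a_j+ib_j}{2}$, and writes
\[
M=\{a_j,b_j:j\geq 3\}^{\perp_2}_X=\bigcap_{j\geq 3} X_1(e_j).
\]
Part $(c)$ then gives $\Delta(M)=\bigcap_{j\geq 3} Y_1(\Delta(e_j))$, which is the $4$-dimensional subtriple $N\cong M_2$ in $Y$. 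This is precisely the ``characterization of $M$'' you need, and it is not via its internal grid but via its external Hilbert-orthogonal complement.

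This also explains why the paper splits into \emph{three} cases rather than your two: when $\dim X=4$ there are no basis vectors outside $\{a,b,c,d\}$, so the complement trick is vacuous and one must instead argue directly that $\dim Y=4$ (ruling out $\dim Y=3$ via Case~1 for $\Delta^{-1}$, and $\dim Y\geq 5$ by producing five pairwise Hilbert-orthogonal elements whose $\Delta^{-1}$-images would force $\dim X\geq 5$). Your ``symmetric application'' gestures at this but does not supply the actual count.
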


\begin{proof} $(a)$ follows from \cref{lem pres tripotents and leq}, while the same lemma and \cref{c preservation Perice 1 + 0} give $(b)$.\smallskip
	
$(c)$ Take any $z\in \{a,b\}_{X}^{\perp_2}$. By $(b),$ $\Delta(z) = z_1' + \lambda \frac{a'-ib'}{2} = z_2' + \mu \frac{a'+ib'}{2}$, for some $z_1',z_2'\in  \{a',b'\}_{Y}^{\perp_2}$, $\lambda, \mu \in\mathbb{C},$ and thus $\Delta (z)\in \{a',b'\}_{Y}^{\perp_2}.$ This shows that $\Delta\left(X_1(e_1) \right)=Y_1(\Delta(e_1))$. If $z\in X_0(e_1) = \mathbb{C} e_2$ with $e_2=\frac{a-ib}{2}$, we can write $z = \lambda e_2$ for some complex number $\lambda$. By applying the function $f_{e_2}$ (see page~\pageref{def of fe}) we get $\Delta (z) = f_{e_2} (\lambda) \Delta(e_2)$, and by \cref{lem orthogonal tripotents} that $\Delta(e_2)$ is a tripotent orthogonal to $\Delta(e_1)$, and therefore $\Delta (e_2) = \gamma \frac{a'-ib'}{2}$ for some unitary $\gamma\in \mathbb{C}$. Therefore $\Delta(z) \in Y_0(\Delta(e_1))$.\smallskip

$(d)$ To prove the desired statement, set $e_1= \frac{a+ib}{2}$ and write  $\Delta\left(\frac{a+ib}{2}\right)= \frac{a'+ib'}{2}$, 
where $\{a',b'\}$  is an orthonormal system in $Y^{-}$. We shall distinguish three cases:\smallskip

\emph{Case 1}: dim$(X) =3$. If this holds, $X\cong S_2$, and $X_1(e_1) $ $= \mathbb{C} c$ is $1$-dimensional. The properties of the mapping $f_{c}$ (see the definition in page~\pageref{def of fe}) now imply that $\Delta\left(X_1(e_1)\right)= \Delta (\mathbb{C}c) = f_{c} (\mathbb{C}) \Delta(c).$ Thus, it follows from $(c)$ that $Y_1(\Delta(e_1))$ also is $1$-dimensional. This holds if and only if $Y$ is three dimensional and necessarily $Y\cong S_2$. Therefore $\Delta$ maps $S_2\cong X$ onto $Y\cong S_2$. \smallskip

\emph{Case 2}: dim$(X) =4$. It is also known, and not hard to see, that $X$ is triple isomorphic to $M_2,$ the space of all $2\times 2$ matrices with complex entries. We also know that the number of mutually orthogonal elements in the Hilbert space $X$ cannot exceed 4. We shall first show that $Y$ must be also $4$-dimensional. If dim$(Y)=3,$ we apply the previous \emph{Case 1} to $\Delta^{-1}: Y\to X$ and we obtain that dim$(X)=3$, which is impossible. If dim$(Y)\geq 5$, we can find an orthonormal system $\{\xi_1,\xi_2,\xi_3,\xi_4,\xi_5\}$ in $Y^{-}$. Having in mind $(c)$ we get that $$\begin{aligned}
	\Delta^{-1}(\xi_j), \Delta^{-1}\left(\frac{\xi_3+i\xi_4}{2}\right), \  &\Delta^{-1}\left(\frac{\xi_3-i\xi_4}{2}\right)\in X_1\left(\Delta^{-1}\left(\frac{\xi_1+i\xi_2}{2}\right)\right)  \\
	&= \left\{\Delta^{-1}\left(\frac{\xi_1+i\xi_2}{2}\right), \Delta^{-1}\left(\frac{\xi_1-i\xi_2}{2}\right) \right\}^{\perp_2}_X,
\end{aligned}$$ for all $j =3,4,5$ and thus $
\{\Delta^{-1}\left(\xi_5\right)\}\cup X_2\left(\Delta^{-1}\left(\frac{\xi_3+i\xi_4}{2}\right)\right)\oplus X_0\left(\Delta^{-1}\left(\frac{\xi_3+i\xi_4}{2}\right)\right) $ is contained in $X_1\left(\Delta^{-1}\left(\frac{\xi_1-i\xi_2}{2}\right)\right) = X_1\left(\Delta^{-1}\left(\frac{\xi_1+i\xi_2}{2}\right)\right).$ It also follows from $(c)$ that $$ \Delta^{-1}(\xi_5)\in \left\{\Delta^{-1}\left(\frac{\xi_3+i\xi_4}{2}\right), \Delta^{-1}\left(\frac{\xi_3-i\xi_4}{2}\right) \right\}^{\perp_2}_X = X_1\left(\Delta^{-1}\left(\frac{\xi_3+i\xi_4}{2}\right)\right).$$ Therefore, the elements in the following three sets must be orthogonal in the Hilbert space $X$: $\{\Delta^{-1}\left(\xi_5\right)\}$, $X_2\left(\Delta^{-1}\left(\frac{\xi_3+i \xi_4}{2}\right)\right)\oplus X_0\left(\Delta^{-1}\left(\frac{\xi_3+i \xi_4}{2}\right)\right)$, and \linebreak $X_2\left(\Delta^{-1}\left(\frac{\xi_1+i \xi_2}{2}\right)\right)\oplus X_0\left(\Delta^{-1}\left(\frac{\xi_1+i \xi_2}{2}\right)\right),$ where the last two of them contain at least two elements which are orthogonal in the Hilbert space $X$. This implies that dim$(X)\geq 5$, which contradicts our assumption.\smallskip

We have therefore shown that dim$(Y) =4$, and thus $X,Y\cong M_2.$ Observe that in this case $\Delta: M_2\to M_2$ is a bijective mapping preserving the truncation of triple products in both ways.  \smallskip

\emph{Case 3}: dim$(X) \geq 5$. Since $\Delta\left(\frac{a-ib}{2}\right) \in Y_0 \left(
\Delta\left(\frac{a+ib}{2}\right) \right) = \mathbb{C} \frac{a'-ib'}{2}$, and thus  $\Delta\left(\frac{a-ib}{2}\right) = \gamma_{a,b} \frac{a'-ib'}{2}$ for some unitary $\gamma_{a,b}\in \mathbb{C}$, it is easy to check that 
$$\begin{aligned}
	Y_1\left(\Delta\left(\frac{a+ib}{2}\right) \right) &= \left\{ \Delta\left(\frac{a+ib}{2}\right), \Delta\left(\frac{a-ib}{2}\right) \right\}^{\perp_2}_Y\\
	&= \left\{ \frac{a'+ib'}{2}, \gamma_{a,b} \frac{a'-ib'}{2} \right\}^{\perp_2}_Y = \left\{ a',b' \right\}^{\perp_2}_Y.
\end{aligned}$$
Let us find an orthonormal system $\{a,b,c,d\}$ in $X^{-}$, and we write $\Delta\left(\frac{c+id}{2}\right)= \frac{c'+id'}{2}$, where $\{c',d'\}$ is an orthonormal system in $Y^{-}$. As before, $Y_1\left( \Delta\left(\frac{c+id}{2}\right) \right)=\left\{ \Delta\left(\frac{c+id}{2}\right), \Delta\left(\frac{c-id}{2}\right) \right\}^{\perp_2}_Y = \left\{c',d'\right\}^{\perp_2}_Y.$ Furthermore, having in mind that $\Delta\left(\frac{a+ib}{2}\right),$ $ \Delta\left(\frac{a-ib}{2}\right)\in \left\{\Delta\left(\frac{c+id}{2}\right),\Delta\left(\frac{c-id}{2}\right) \right\}^{\perp_2}_Y$, it follows that $\{a',b',c',d'\}$ is an orthonormal system in $Y^{-}$.\smallskip

We show next that $\Delta \left(\{a,b,c,d\}_{X}^{\perp_2} \right)= \{a',b',c',d'\}^{\perp_2}_{Y}.$ The desired conclusion follows from $(c)$ by just observing that $$\begin{aligned}
	\Delta \left(\{a,b,c,d\}_{X}^{\perp_2} \right) &=  \Delta \left( X_1\left(\frac{a+ib}{2} \right) \cap X_1\left( \frac{c+id}{2}\right)  \right) \\
	&= Y_1\left( \Delta\left(\frac{a+ib}{2}\right) \right)\cap  Y_1\left( \Delta\left(\frac{c+id}{2}\right) \right) \\
	&=\{a',b'\}_{Y}^{\perp_2}\cap \{c',d'\}_{Y}^{\perp_2} = \{a',b',c',d'\}_{Y}^{\perp_2}.
\end{aligned} $$

If dim$(X)$ is even and finite, we find an orthonormal basis of $X^{-}$ of the form $\{a_1,\ldots,a_n,b_1,\ldots,b_n \}$ ($n\geq 3$). By considering the minimal tripotents $e_{j} =\frac{a_j + i b_j}{2}$ ($j\in \{1,\ldots, n\}$), and its images in the form $\Delta\left(\frac{a_j + i b_j}{2}\right) =\frac{a_j' + i b_j'}{2}$, and the arguments above show that the set $\{a_1',\ldots,a_n',b_1',\ldots,b_n' \}$ is an orthonormal basis of $Y^{-}$. We further have 
$$\begin{aligned} \Delta (M_2)&\cong	\Delta \left( \hbox{span} \left\{\frac{a_j+ib_j}{2}, \frac{a_j-ib_j}{2} : j\leq 2\right\} \right) = 
	\Delta \left(\{a_j,b_j : j\geq 3\}_{X}^{\perp_2} \right)\\ 
	&=  \Delta \left(\bigcap_{j\geq 3} X_1\left(\frac{a_j+ib_j}{2} \right)\right) = \bigcap_{j\geq 3}\left( Y_1\left( \Delta\left(\frac{a_j+ib_j}{2}\right) \right) \right) \\
	&=\{a_j',b_j'; j\geq 3\}_{Y}^{\perp_2}= \hbox{span} \left\{\frac{a'_j+ib'_j}{2}, \frac{a'_j-ib'_j}{2} : j\leq 2\right\}\cong M_2.  
\end{aligned} $$ In case that $X$ is infinite dimensional, we can proceed similarly with the help of the Cantor-Berstein theorem.\smallskip

Finally, if dim$(X) = 2n +1$ is odd, we pick an orthonormal basis of $X^{-}$ of the form $\{a_1,\ldots,a_n,b_1,\ldots,b_n \}\cup\{c\}$ ($n\geq 4$). As before the minimal tripotents $e_{i} =\frac{a_i + i b_i}{2}$ ($i\in \{1,\ldots, n\}$), satisfy that $\Delta\left(\frac{a_i + i b_i}{2}\right) =\frac{a_i' + i b_i'}{2}$, where $\{a_1',\ldots,a_n',b_1',\ldots,b_n' \}$ is an orthonormal system of $Y^{-}$. The element $\Delta(c) = \gamma c'$ for some norm-one $c'\in Y^{-}$ which is orthogonal to all $a_j',b_j'$ by $(c)$. If $\{a_1',\ldots,a_n',b_1',\ldots,b_n' \}\cup\{c'\}$ is not an orthonormal basis of $Y^{-}$, we could extend it to an orthonormal basis with at least one more element, which by the previous discussion would imply that $X$ has dimension at least $2 n +2$, which is impossible. We conclude by observing that 
$$\begin{aligned} \Delta (S_2)&\cong \Delta \left( \hbox{span} \left\{\frac{a_j+ib_j}{2}, \frac{a_j-ib_j}{2},c : j\leq 2\right\} \right) = 
	\Delta \left(\{a_j,b_j : j\geq 3\}_{X}^{\perp_2} \right)\\ 
	&=  \Delta \left(\bigcap_{j\geq 3} X_1\left(\frac{a_j+ib_j}{2} \right)\right) = \bigcap_{j\geq 3}\left( Y_1\left( \Delta\left(\frac{a_j+ib_j}{2}\right) \right) \right) \\
	&=\{a_j',b_j'; j\geq 3\}_{Y}^{\perp_2}= \hbox{span} \left\{\frac{a'_j+ib'_j}{2}, \frac{a'_j-ib'_j}{2}, c : j\leq 2\right\}\cong S_2.  
\end{aligned} $$ 
\end{proof}

\begin{rem}\label{r our result doesnt follow from the case of B(H)} There is a natural connection between the problem studied in this note and the one in \cite{Jia_Shi_Ji_AnnFunctAnn_2022} for $B(H)$. In the just quoted reference the authors study bijections $F : B(H)\to B(H)$ with the following property: $a$ is a truncation of $bcb$ if and only if $\Delta (a)$ is a truncation of $\Delta (b) \Delta(c) \Delta (b)$. The meaning of truncation does not change when $B(H)$ is regarded as a JB$^*$-triple, while our mapping $\Delta : B(H)\to B(H)$ is a bijection satisfying:  $a$ is a truncation of $bc^*b$ if and only if $\Delta (a)$ is a truncation of $\Delta (b) \Delta(c)^* \Delta (b)$, which is a different hypothesis non-equivalent a priori to the previous one. So, in principle, in the setting of $B(H)$ spaces the hypotheses here and in \cite{Jia_Shi_Ji_AnnFunctAnn_2022} are not directly related, however the conclusions are the same (compare \cite[Theorem 3.1]{Jia_Shi_Ji_AnnFunctAnn_2022} and \cref{t main thrm preservers of truncations of triple products}).
\end{rem}

The next lemma assures that we can restrict our study to certain concrete minimal tripotents.  

\begin{lem}\label{l intechange of tripotents}
Let $E$ and $F$ be JB$^*$-triples, and let $e$ be a minimal tripotent in $E$ satisfying the following property: for every bijection $\tilde{\Delta}: E\to F$ preserving the truncation of triple products in both directions we have $\tilde{\Delta} (\lambda e) = \lambda \tilde{\Delta}(e)$ for all $\lambda\in \mathbb{R}$ {\rm(}respectively, for all $\lambda\in \mathbb{C}${\rm)}. Suppose $v$ is a minimal tripotent in $E$ and $\Delta: E\to F$ is a bijection preserving the truncation of triple products in both directions such that there exits a triple isomorphism $\Phi: E\to E$ with $\Phi(e) =v$. Then ${\Delta} (\lambda v) = \lambda \Delta(v),$ for all $\lambda\in \mathbb{R}$ {\rm(}respectively, for all $\lambda\in \mathbb{C}${\rm)}. 
\end{lem}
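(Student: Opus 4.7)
The plan is to transport the hypothesis on $e$ to $v$ by pre-composing $\Delta$ with the triple isomorphism $\Phi$. Set $\tilde{\Delta}:=\Delta\circ\Phi:E\to F$. Since $\Phi$ is a triple isomorphism (in particular, a linear bijection) and $\Delta$ is a bijection, $\tilde{\Delta}$ is a bijection.

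First I would verify that $\tilde{\Delta}$ preserves the truncation of triple products in both directions, so that it fits the hypothesis available for $e$. This is the only step with any content, and it is essentially a transport of structure: for all $a,b,c\in E$, the triple-isomorphism property gives $\Phi(\{a,a,a\})=\{\Phi(a),\Phi(a),\Phi(a)\}$ and $\Phi(\{a,\{b,c,b\},a\})=\{\Phi(a),\{\Phi(b),\Phi(c),\Phi(b)\},\Phi(a)\}$. Hence
\[
a\text{ is a truncation of }\{b,c,b\}\ \Longleftrightarrow\ \Phi(a)\text{ is a truncation of }\{\Phi(b),\Phi(c),\Phi(b)\}.
\]
Applying now the fact that $\Delta$ preserves truncation of triple products in both directions, the right-hand condition is in turn equivalent to $\Delta(\Phi(a))$ being a truncation of $\{\Delta(\Phi(b)),\Delta(\Phi(c)),\Delta(\Phi(b))\}$, that is, to $\tilde{\Delta}(a)$ being a truncation of $\{\tilde{\Delta}(b),\tilde{\Delta}(c),\tilde{\Delta}(b)\}$. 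Thus $\tilde{\Delta}$ satisfies \cref{define trunc preservers}.

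Once that is established, the hypothesis on the minimal tripotent $e$ applies to $\tilde{\Delta}$: for every scalar $\lambda$ in the relevant field ($\mathbb{R}$, respectively $\mathbb{C}$) we have $\tilde{\Delta}(\lambda e)=\lambda\tilde{\Delta}(e)$. Unwinding the definition of $\tilde{\Delta}$ and using the linearity of $\Phi$ together with $\Phi(e)=v$,
\[
\Delta(\lambda v)=\Delta(\lambda\Phi(e))=\Delta(\Phi(\lambda e))=\tilde{\Delta}(\lambda e)=\lambda\tilde{\Delta}(e)=\lambda\Delta(\Phi(e))=\lambda\Delta(v),
\]
which is the desired conclusion. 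There is no genuine obstacle; the whole argument is a direct composition trick, and the only point that requires a line of justification is the bidirectional preservation of truncation by $\tilde{\Delta}$, which follows immediately from the fact that triple isomorphisms preserve both sides of the defining equivalence.
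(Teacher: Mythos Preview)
Your proof is correct and follows exactly the paper's own approach: set $\tilde{\Delta}=\Delta\Phi$, observe it is a bijection preserving the truncation of triple products in both directions, apply the hypothesis on $e$ to $\tilde{\Delta}$, and unwind using $\Phi(e)=v$. The paper states this in one line, while you have spelled out the verification that $\tilde{\Delta}$ satisfies \cref{define trunc preservers}, but the argument is identical.
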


\begin{proof} Observe that the composition $\Delta \Phi : E\to F$ also is a bijection preserving the truncation of triple products in both directions with $\Delta (\lambda v) = \Delta \Phi (\lambda e) = \lambda \Delta \Phi (e) = \lambda \Delta\Phi (e) = \lambda \Delta(v),$ for all $\lambda \in \mathbb{R}$ (respectively, in $\mathbb{C}$), by hypotheses.
\end{proof}

In the rest of this subsection we shall study the cases in which $\Delta : S_2\to S_2$ and $\Delta : M_2\to M_2$. Both JB$^*$-triples, $M_2$ and  $S_2$ are spin factors having rank-two. Let us find two orthogonal minimal tripotents $e_1,e_2$ in $S_2$ or in $M_2,$ and  write $u = e_1 + e_2$ for the corresponding unitary tripotent in the spin $S_2$ or $M_2$. It follows from \cref{lem almost orth add}, \cref{lem orthogonal tripotents}, \cref{Peirce2toPeirce2} (see also page~\pageref{def of fe})  and \cref{cor of fmultiplicative} that there are functions $f_{e_{1}}$ and $f_{e_{2}}$ on $\CC$ (depending on $e_{1}$ and $e_{2}$, respectively) such that
$$\begin{aligned}
\Delta (\lambda e_{1} +  \mu e_{2})^{[3]} &= \Delta ((\lambda^{[3]} e_{1})^{[\frac13]} +  (\mu^{[3]} e_{2})^{[\frac13]})^{[3]} = \Delta (\lambda^{[3]} e_{1}) + \Delta (  \mu^{[3]} e_{2}) \\ 
&= f_{e_{1}}(\lambda)^{[3]} \Delta(e_{1}) + f_{e_{2}}(\mu)^{[3]} \Delta(e_{2}) \quad (\forall \lambda,\mu \in \CC).
\end{aligned}$$ Therefore 
\begin{equation}\label{eq additivity of Delta on Ce1 + Ce2} \Delta (\lambda e_{1} +  \mu e_{2}) = f_{e_{1}}(\lambda) \Delta(e_{1}) + f_{e_{2}}(\mu) \Delta(e_{2}) \quad (\forall \lambda,\mu \in \CC),
\end{equation} and \begin{equation}\label{eq additivity of Delta on Ce1 + Ce2 two}\Delta (\lambda u) = \Delta (\lambda e_{1} +  \lambda e_{2}) = f_{e_{1}}(\lambda) \Delta(e_{1}) + f_{e_{2}}(\lambda) \Delta(e_{2}) \quad (\forall \lambda \in \CC).\end{equation}
Our goal will consist in proving that  $f_{e_{1}} = f_{e_{2}}$ and they both are nothing but the identity map or the conjugation on $\mathbb{C}$.\smallskip

Let $e$ and $v$ be two minimal tripotents in a JBW$^*$-triple $M$. According to \cite{Per_RIMS_2023,Peralta_ResMath_2023}, the \emph{triple transition pseudo probability} from $e$ to $v$ is defined by $TTP (e,v) = \varphi_v (e)$, where $\varphi_v$ is the unique pure atom in $M_*$ supported at $v$. When $e$ and $v$ are minimal projections in $B(H)$, $TTP(e,v)$ is the usual transition probability between $e$ and $v$ in Wigner's theorem. \smallskip

The next result is probably the central core of our argument. The proof, which is quite technical, is devoted to get the conditions to apply \cite[Theorem 4.3]{Fried_Peralta_Ann_Math_Phys_2022} and the identity principle stated in \cref{thm of extension T and Delta}.

\begin{thrm}\label{linear at identity real} Let $\Delta : C\to C$ be a {\rm(}non-necessarily linear{\rm)} bijection preserving the truncation of triple products in both directions, where $C= S_2$ or $M_2$. Then one of the next statements holds: 
\begin{enumerate}[$(a)$]
\item For each minimal tripotent $e$ in $C$ we have $\Delta (\lambda e) = \lambda \Delta(e)$ for every $\lambda \in \mathbb{C}$.
\item For each minimal tripotent $e$ in $C$ we have $\Delta (\lambda e) = \overline{\lambda} \Delta(e)$ for every $\lambda \in \mathbb{C}$.
\end{enumerate}	
Consequently, $\Delta$ is a linear or a conjugate-linear triple automorphism on $C$.
\end{thrm}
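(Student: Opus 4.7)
The plan is to show that for every minimal tripotent $e\in C$ the multiplicative bijection $f_e:\mathbb{C}\to\mathbb{C}$ from \cref{cor of fmultiplicative} is either the identity map or complex conjugation, and that the same choice works for every $e$. Once this dichotomy is in hand, the final statement is immediate: by \cref{thm of extension T and Delta} we obtain $\Delta = T$, where $T$ is a complex linear or conjugate-linear triple automorphism of $C$ coinciding with $\Delta$ on minimal tripotents. Fix orthogonal minimal tripotents $e_1,e_2$ with $u=e_1+e_2$ a unitary; by \cref{lem pres unitaries JB-alg} we may assume $\Delta$ is unital, so \cref{prop basic properties Jordan} applies and the additivity formula \eqref{eq additivity of Delta on Ce1 + Ce2} is available.

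The central technical ingredient is the identity
\begin{equation*}
\lambda e \text{ is a truncation of } v \iff \lambda = \varphi_e(v),
\end{equation*}
valid for every pair of minimal tripotents $e,v$ (immediate from \cref{lem charact truncation} together with the Peirce computation $\{e,v,e\}=\overline{\varphi_e(v)}\,e$). Applying $\Delta$ to both sides yields the fundamental relation
\begin{equation*}
f_e(\varphi_e(v)) = \varphi_{\Delta(e)}(\Delta(v)) \quad \text{for all minimal tripotents } e,v\in C.
\end{equation*}
To determine $f_e$ on $\mathbb{R}$, I would exploit \cref{prop basic properties Jordan}$(g)$ applied to invertible self-adjoint elements of the form $\alpha e_1+\beta e_2+\gamma c$, where $c$ is a self-adjoint unitary interchanging $e_1$ and $e_2$ via the Jordan $U$-operator (in $M_2$ take $c=e_{12}+e_{21}$; use the spin-factor analogue in $S_2$). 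Combining this with the displayed relation and the finite-dimensional combinatorics of minimal tripotents in $S_2$ and $M_2$, one first proves $f_{e_1}=f_{e_2}$; then, ranging $v$ over minimal tripotents with $\varphi_{e_1}(v)\in[0,1]$ and invoking the preservation of the partial order on tripotents, one gets $f_{e_1}(t)\in[0,1]$ for $t\in[0,1]$, and together with multiplicativity and bijectivity this forces $f_{e_1}$ to fix $[0,1]$ pointwise. \cref{l suffcient cond for fe identity} then yields $f_{e_1}(r)=r$ for every $r\in\mathbb{R}$, and the corresponding determination on $\mathbb{T}$ (splitting according to whether $f_{e_1}|_\mathbb{T}$ is the identity or the complex conjugation, the only possibilities compatible with multiplicativity and $f_{e_1}(\bar\lambda)=\overline{f_{e_1}(\lambda)}$) produces the dichotomy $f_{e_1}=\mathrm{id}_\mathbb{C}$ or $f_{e_1}(\lambda)=\bar\lambda$.

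To propagate the dichotomy to every minimal tripotent, I invoke \cref{l intechange of tripotents}: the triple automorphism group of $C$ acts transitively on the set of minimal tripotents (true for both $C=S_2$ and $C=M_2$), so the outcome for $e_1$ determines $f_v$ for every minimal $v$. A coherence argument, exploiting that within a single orthogonal pair $(v_1,v_2)$ the functions $f_{v_1}$ and $f_{v_2}$ must coincide, rules out any mixed situation and yields a uniform choice. With this in hand, the hypotheses of \cref{thm of extension T and Delta} are verified: choosing $T$ to be a linear (respectively, conjugate-linear) triple automorphism of $C$ with $T(e)=\Delta(e)$ on every minimal $e$ (such $T$ exists by the Cartan factor automorphism theory together with the preservation results from \cref{lem pres tripotents and leq} and \cref{Peirce2toPeirce2}, or alternatively by \cite[Theorem 4.3]{Fried_Peralta_Ann_Math_Phys_2022}), the identity principle forces $T=\Delta$. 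The main obstacle is the determination of $f_{e_1}$ on $\mathbb{R}$: the supply of orthogonal decompositions in the low-dimensional factors $S_2$ and $M_2$ is limited, and extracting enough arithmetic identities from \cref{prop basic properties Jordan}$(g)$ to identify $f_{e_1}|_{[0,1]}$ requires a careful geometric case analysis rooted in the explicit description of minimal tripotents in \cref{r preservers of truncations between spin factors}.
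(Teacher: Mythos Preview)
Your overall architecture matches the paper's: reduce to determining the bijections $f_e$, show the dichotomy for one minimal tripotent, propagate via transitivity of the automorphism group (\cref{l intechange of tripotents}), and close with \cite[Theorem 4.3]{Fried_Peralta_Ann_Math_Phys_2022} together with the identity principle in \cref{thm of extension T and Delta}. The fundamental relation $f_e(\varphi_e(v))=\varphi_{\Delta(e)}(\Delta(v))$ that you isolate is indeed the workhorse (the paper calls this preservation of triple transition pseudo-probabilities), and the paper uses it in exactly the way you suggest.

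However, the proposal contains two genuine gaps at the heart of the argument. First, the claim that ``multiplicativity and bijectivity forces $f_{e_1}$ to fix $[0,1]$ pointwise'' is false: for any $\alpha>0$ the map $t\mapsto t^{\alpha}$ is a multiplicative bijection of $[0,1]$ (and extends to one on $(0,\infty)$), and there are discontinuous examples as well. What the paper actually extracts, by choosing a pair of \emph{orthogonal} minimal projections $p_1,p_2$ with diagonal entries $\lambda,1-\lambda$ and using that their images under the normalized map $\Delta_1$ are again orthogonal minimal projections, is the additional functional equation $f_{e_1}(\lambda)+f_{e_1}(1-\lambda)=1$ on $[0,1]$. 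This equation, together with multiplicativity, is what pins down $f_{e_1}$ on $[0,1]$; your outline never produces it. Second, your claim that on $\mathbb{T}$ ``the only possibilities compatible with multiplicativity and $f_{e_1}(\bar\lambda)=\overline{f_{e_1}(\lambda)}$'' are the identity and conjugation is also false: the circle group has many conjugation-commuting multiplicative automorphisms (construct one via a Hamel basis of $\mathbb{R}$ over $\mathbb{Q}$). The paper does \emph{not} argue abstractly here; instead it composes $\Delta$ with further explicit triple automorphisms so that the resulting map $\Delta_0$ fixes several specific minimal projections, then feeds the concrete tripotents $v_1=\tfrac14\begin{pmatrix}\alpha&1\\1&\bar\alpha\end{pmatrix}$ and $v_2=\tfrac14\begin{pmatrix}-\alpha&1\\1&-\bar\alpha\end{pmatrix}$ (with $|\alpha|=1$) into the truncation relation to compute $\Re\hbox{e}\,f^0(\alpha)=\Re\hbox{e}\,\alpha$ for all $\alpha\in\mathbb{T}$, and only then splits into the two cases $f^0(i)=\pm i$.

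In short, your invocation of \cref{prop basic properties Jordan}$(g)$ and abstract multiplicativity does not do the job; the determination of $f_{e_1}$ on both $[0,1]$ and $\mathbb{T}$ requires the explicit low-dimensional matrix computations with carefully chosen minimal projections and tripotents that the paper carries out, and these are precisely the ``careful geometric case analysis'' you flag as an obstacle but do not actually supply.
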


\begin{proof} It is known that for $C=S_2$ or  $M_2$ we can always find a triple automorphism on $C$ interchanging any two couples of minimal tripotents (see, for example, \cite[Proposition 5.8]{Kaup_ManMath_1997}, though the statement here is much easier). So, by \cref{l intechange of tripotents}, it suffices to prove that for $e_{1} := \left( \begin{array}{cc}
		1 & 0 \\
		0 & 0
	\end{array}  \right),$ $\Delta(\lambda e_1) = \lambda \Delta(e_1),$ for every bijection $\Delta : C\to C$ preserving the truncation of triple products in both directions and every $\lambda \in \mathbb{R}$. Fix any such a mapping $\Delta$. \smallskip
	
Set $e_{2} := \left( \begin{array}{cc}
			0 & 0 \\
			0 & 1
		\end{array} \right)$ and $I = e_1 +e_2$. Let $\Phi : C\to C$ be a triple automorphism mapping $\Delta(e_j)$ to $e_j$ for  $j = 1,2$. Observe that $\Delta_1 = \Phi\Delta$ is a bijection preserving the truncation of triple products in both directions with $\Delta_1(e_j) = e_j,$ for every $j\in\{1,2\}$. Henceforth we deal with the mapping $\Delta_1$.\smallskip
		
We deal first with $C = S_2$. It is not hard to check that every minimal projection $p$ in $S_{2}$ can be written in the form
\begin{equation}\label{minimal projection}
	p = \left( \begin{array}{cc}
		\alpha & \sqrt{\alpha (1-\alpha)} \\
		\sqrt{\alpha(1-\alpha)} & 1-\alpha
	\end{array}  \right),  \quad \text{for some $\alpha\in [0,1]$}. 
\end{equation}

Since $ \alpha e_{1}$ is a truncation of $p$ for every $\alpha \in [0,1],$ it follows that $\Delta_1 ( \alpha e_{1}) = f_{e_{1}}(\alpha) e_{1}$ is a truncation of $\Delta_1 (p)$ (cf. \cref{r Delta preserves the truncation of tripotents}), where the latter is another projection in $S_2$ (cf. \cref{prop basic properties Jordan}), which implies that $f_{e_{1}}|_{[0,1]} : [0,1] \to [0,1]$. It is easy to check from the injectivity of $\Delta_1,$ and the properties of $f_{e_1},$ that $f_{e_{1}}|_{[0,1]}$ is injective. Since $\Delta_1^{-1}$ satisfies the same properties, there exists a bijection $g_{e_1} : \mathbb{C}\to \mathbb{C}$ with the same properties of $f_{e_1}$ satisfying $\Delta_1^{-1} (\lambda e_1) = g_{e_1}(\lambda) e_1$, and thus $f_{e_1} g_{e_1} (\lambda) e_1 = \Delta_1 \Delta_1^{-1} (\lambda e_1) = \lambda e_1 =  \Delta_1^{-1}  \Delta_1 (\lambda e_1) =  g_{e_1} f_{e_1} (\lambda) e_1,$ for all $\lambda$ in $\mathbb{C}$. Since $g_{e_1} ([0,1]) \subseteq [0,1]$, $f_{e_{1}}|_{[0,1]} : [0,1] \to [0,1]$ is a bijection. These same properties are also valid for $f_{e_{2}}$.\smallskip

We shall next show that $\Delta_1 (\lambda  I) =f_{e_{1}} (\lambda) \Delta_1(I) = f_{e_{2}} (\lambda) \Delta_1(I),$ for all $\lambda \in [0,1]$. We can clearly assume that $\lambda \in (0,1)$. It is clear that \begin{equation*}
	p_{1} = \left( \begin{array}{cc}
		\lambda  & \sqrt{\lambda(1-\lambda)} \\
		\sqrt{\lambda(1-\lambda)} & 1-\lambda
	\end{array}  \right), \ \  
	p_{2} = \left( \begin{array}{cc}
		1-\lambda  & \sqrt{\lambda(1-\lambda)} \\
		\sqrt{\lambda(1-\lambda)} & \lambda
	\end{array}  \right)
\end{equation*} 
are both minimal projections in $S_2$. We may assume (cf. \cref{minimal projection}) that $\Delta_1 (p_{1})$ and $\Delta_1 (p_{2})$ have the following matrix expression:
\begin{equation*}
	\Delta_1 (p_{1}) = \left( \begin{array}{cc}
		t  & \sqrt{t(1-t)} \\
		\sqrt{t(1-t)} & 1-t
	\end{array}  \right),\ \  
	\Delta_1 (p_{2}) = \left( \begin{array}{cc}
		1-t  & \sqrt{t(1-t)} \\
		\sqrt{t(1-t)} & t
	\end{array}  \right) 
\end{equation*} for some $t \in (0,1),$ because $\Delta_1$ is unital and preserves orthogonality (cf. \cref{prop basic properties Jordan}). Therefore, using the facts that $\lambda e_{1}$ and $(1-\lambda ) e_{2}$ are both truncations of $p_{1},$ and $(1 -\lambda )e_{1}$ is a truncation of $p_{2}$ and $\Delta$ preserves the truncation of tripotents (see \cref{r Delta preserves the truncation of tripotents}), we have $f_{e_{1}}(\lambda ) = t$, $f_{e_{2}}(1 - \lambda) = 1 -t,$ and $f_{e_{1}}(1 - \lambda) = 1-t$. Hence, $f_{e_{2}} (\lambda) = f_{e_{1}} (\lambda) = \lambda$ for all $\lambda \in[0,1]$, \cref{l suffcient cond for fe identity} assures that $f_{e_{2}} (\lambda) = f_{e_{1}} (\lambda) = \lambda$ for all $\lambda \in \mathbb{R}.$ The arbitrariness of $\Delta_1$ (equivalently of $\Delta$) proves that $$  \Delta (\lambda e) =\lambda \Delta (e), \ \ (\forall \lambda\in \mathbb{R}),$$ for all minimal tripotent  $e\in C=S_2$, every bijection $\Delta: S_2\to S_2$ preserving the truncation of triple products, and all $\lambda \in \mathbb{R}.$\smallskip

We have some additional conclusions on the mapping $\Delta_1$. Observe that, according to \eqref{minimal projection}, every minimal projection $p =\left( \begin{array}{cc}
	\alpha & \sqrt{\alpha (1-\alpha)} \\
	 \sqrt{\alpha(1-\alpha)} & 1-\alpha
\end{array}  \right)$ in $S_2$ is uniquely determined by a parameter $\alpha\in [0,1]$. We know that $\Delta_1 (p)$ is a minimal projection in $S_2$ (cf. \cref{prop basic properties Jordan}) with $f_{e_1} (\Delta_1 (p)) = \alpha,$ therefore 
\begin{equation}\label{eq Delta1 fixes min projections S_2} \hbox{$\Delta_1 (p) =p,$ for every minimal projection $p$ in $S_2$.}
\end{equation}

Let us make the necessary changes to deal with $C = M_2$. First, the minmal projections in $M_2$ can be represented in the form \begin{equation}\label{minimal projection in M2}
	p = \left( \begin{array}{cc}
		\alpha & \gamma\sqrt{\alpha (1-\alpha)} \\
		\overline{\gamma} \sqrt{\alpha(1-\alpha)} & 1-\alpha
	\end{array}  \right),  \quad \text{for some $\alpha\in [0,1],$ $\gamma\in \mathbb{T}$.} 
\end{equation} This small change in the representation does not imply any difference in the arguments, which remain valid in this case and thus  $$  \Delta (\lambda e) =\lambda \Delta (e),$$ for all minimal tripotent  $e\in C=M_2$, every bijection $\Delta: M_2\to M_2$ preserving the truncation of triple products, and all $\lambda \in \mathbb{R}.$\smallskip

Summarizing what we proved until now, suppose $u, u_1,u_2$ are tripotents in $C$ with $u$ unitary, $u_1,u_2$ minimal and $u= u_1 + u_2$. We deduce from \eqref{eq additivity of Delta on Ce1 + Ce2} and the previous conclusions that \begin{equation}\label{eq Delta behaves real linearly on real combinations of unitaries} \Delta_1 (\lambda u_1 + \mu u_2) = \Delta_1 (\lambda u_{1}) + \Delta_1 (\mu u_{2}) = \lambda \Delta_1 (u_{1}) + \mu \Delta_1(u_{2}),
\end{equation} for all $\lambda,\mu \in \mathbb{R}$.\smallskip 

We shall next show that $\Delta$ preserves real-valued triple transition pseudo-probabi-lities between minimal tripotents, that is, if $e$ and $v$ are two minimal tripotents in $C$ with $TTP (e,v)\in \mathbb{R}$, then $TTP(\Delta (e),\Delta(v)) = TTP(e,v)$. For this purpose, let $e$ and $v$ be minimal tripotents in $C$ with $TTP(e,v)= \varphi_v(e)\in \mathbb{R}$. We have seen in the proof of the previous \cref{thm of extension T and Delta} that if $\varphi_v (e)\neq 0,$ we have $\{\varphi_v (e) v,\varphi_v (e) v, \varphi_v (e) v\} = \{\varphi_v (e) v, e, \varphi_v (e) v\},$ and hence by our hypotheses and \eqref{eq Delta behaves real linearly on real combinations of unitaries}  we obtain $$\begin{aligned}
\varphi_v (e)^2 \overline{\varphi_v (e)} \Delta(v) &= \{\Delta(\varphi_v (e) v),\Delta(\varphi_v (e) v), \Delta(\varphi_v (e) v)\} \\
&= \{\Delta(\varphi_v (e) v), \Delta(e), \Delta(\varphi_v (e) v)\} = \varphi_v (e)^2 \overline{\varphi_{\Delta(v)} (\Delta(e))}\Delta(v), 
\end{aligned} $$ which assures that $TTP(e,v) = \varphi_v (e) = \varphi_{\Delta(v)} (\Delta(e)) = TTP(\Delta(e),\Delta(v)).$ If $TTP(e,v) = \varphi_{v} (e)=0$ the proof is similar via \cref{lem preserve annihil}$(ii)$. \smallskip

The conclusion in \eqref{eq Delta1 fixes min projections S_2} is a bit more elaborated in the case of $M_2$ and we shall need to replace $\Delta_1$ with another mapping. Consider $p_1 = \left( \begin{array}{cc}
	\frac12 & \frac12 \\
	\frac12 & \frac12
\end{array}  \right)\in M_2.$ Since $\Delta_1 (e_1) = e_1$, $\Delta_1 (p_1)$ $ = \left( \begin{array}{cc}
\alpha_1 & \gamma_1\sqrt{\alpha_1 (1-\alpha_1)} \\
\overline{\gamma_1} \sqrt{\alpha_1(1-\alpha_1)} & 1-\alpha_1
\end{array}  \right),$ for some $\alpha_1\in [0,1],$ $\gamma_1\in \mathbb{T},$ and $$TTP( \Delta_1(p_1),\Delta_1(e_1)) = TTP(p_1,e_1) = \frac12,$$ and $\Delta_1(p_1)$ is a minimal projection, we deduce that $\Delta_1 (p_1) =  \left( \begin{array}{cc}
\frac12 & \gamma_1 \frac12 \\
\overline{\gamma_1} \frac12 & \frac12
\end{array}  \right).$ \smallskip

Consider the $^*$-automorphism $\Phi_2: M_2\to M_2$, $\Phi_2 (x) := u x u^*$, with $u= \left( \begin{array}{cc}
\delta_1 & 0 \\
0 &\overline{\delta_1} 
\end{array}  \right),$ where $\delta_1\in \mathbb{T}$ and $\delta_1^2 = \overline{\gamma_1}$. Setting $\Delta_2 = \Phi_2 \Delta_1$, it is not hard to check that $\Delta_2 (e_j) = e_j$ for all $j=1,2$, $\Delta_2 (p_1) = p_1$ and $\Delta_2 (I-p_1) = I-p_1$. Consequently, $$\Delta_2 \left(\left( \begin{array}{cc}
0 & 1 \\
1 &0 
\end{array}  \right)\right) = \Delta_2 (p_1) - \Delta_2 (I-p_1) = 2p_1 -I = \left( \begin{array}{cc}
0 & 1 \\
1 &0 
\end{array}  \right).$$ Take now $p_2 $ $=$ $\left( \begin{array}{cc}
	\frac12 & i\frac12 \\
	-i \frac12 & \frac12
\end{array}  \right).$ Since $\frac12= $ $ TTP(p_2,e_1) $ $= TTP(\Delta_2 (p_2), \Delta_2(e_1)) $ $= TTP(\Delta_2 (p_2), e_1) = \alpha,$ and thus $\Delta_2 (p_2) = \left( \begin{array}{cc}
	\frac12 & \gamma' \frac12 \\
	\overline{\gamma'} \frac12 & \frac12
\end{array}  \right),$ for some $\gamma'\in \mathbb{T}$ (cf. \cref{prop basic properties Jordan}$(b)$). Now the equality $$\begin{aligned}
	\frac{1+ (\gamma'+\overline{\gamma'}) \frac12}{2}&= TTP(\Delta_2(p_2),p_1) =TTP(\Delta_2(p_2),\Delta_2(p_1)) \\
	&= TTP(p_2,p_1) = \frac{1}{2},
\end{aligned}$$ which implies that $\gamma' = \pm i$. \smallskip

We shall distinguish two cases: \smallskip

\noindent\emph{Case 1:} $\Delta_2 (p_2) = \left( \begin{array}{cc}
	\frac12 & i \frac12 \\
	-i \frac12 & \frac12
\end{array}  \right).$ We know that any other minimal projection in $M_2$ can be written in the form  $p $ $=$ $\left( \begin{array}{cc}
	\alpha & \gamma\sqrt{\alpha (1-\alpha)} \\
	\overline{\gamma} \sqrt{\alpha(1-\alpha)} & 1-\alpha
\end{array}  \right),$ for some $\alpha\in (0,1),$ $\gamma\in \mathbb{T}$. Since $\mathbb{R}\ni TTP(p,e_1) = \alpha$, it follows that $TTP(\Delta_2 (p), \Delta_2(e_1))= TTP(\Delta_2 (p), e_1) = \alpha,$ and thus $\Delta_2 (p) = \left( \begin{array}{cc}
\alpha & \gamma' \sqrt{\alpha (1-\alpha)} \\
\overline{\gamma'} \sqrt{\alpha(1-\alpha)} & 1-\alpha
\end{array}  \right),$ for some $\gamma'\in \mathbb{T}$ (cf. \cref{prop basic properties Jordan}). Now the equalities $$\begin{aligned}
\frac{1+ (\gamma'+\overline{\gamma'}) \sqrt{\alpha(1-\alpha)}}{2}&= TTP(\Delta_2(p),p_1) =TTP(\Delta_2(p),\Delta_2(p_1)) \\
&= TTP(p,p_1) = \frac{1+ (\gamma+\overline{\gamma}) \sqrt{\alpha(1-\alpha)}}{2} \\
\frac{1- i (\gamma'-\overline{\gamma'}) \sqrt{\alpha(1-\alpha)}}{2}&= TTP(\Delta_2(p),p_2) =TTP(\Delta_2(p),\Delta_2(p_2)) \\
&= TTP(p,p_2) = \frac{1-i  (\gamma-\overline{\gamma}) \sqrt{\alpha(1-\alpha)}}{2},
\end{aligned}$$ imply that $\gamma =  \gamma'$, witnessing that $\Delta_2$ acts as the identity on the minimal projections of $M_2$.\smallskip

\noindent\emph{Case 2:} $\Delta_2 (p_2) = \left( \begin{array}{cc}
	\frac12 & -i \frac12 \\
	i \frac12 & \frac12
\end{array}  \right).$ Set $\Phi_3:M_2\to M_2$ defined by $\Phi_3(x) = \overline{x}$, the natural component-wise conjugation on $M_2$, which is a conjugate-linear triple automorphism. By replacing $\Delta_2$ with $\Delta_3 (x) := \overline{\Delta_2 (x)} = \Phi_3 \Delta_2 (x)$ ($x\in M_2$), we apply \noindent\emph{Case 1} to $\Delta_3$ to deduce that $\Delta_3$ as the identity on the projections of $M_2$. Observe that the potential extension of $\Delta_3$ will be conjugate-linear.  \smallskip

We have so far shown that there exists a linear or conjugate-linear triple automorphism $\Phi$ on $C$ such that the mapping $\Phi \Delta = \Delta_0$ acts as the identity on all projections of $C$ and $\Delta_0(\lambda e) = \lambda e,$ for all $\lambda \in \mathbb{R}$. \smallskip

For each minimal tripotent $e\in C$ let $f_e^0: \mathbb{C}\to \mathbb{C}$ denote the bijection satisfying $\Delta_0 (\lambda e) = f_e^0 (\lambda) \Delta_0(e)$, for all $\lambda \in \mathbb{C}$ (see page~\pageref{def of fe}). We claim that 
\begin{equation}\label{eq f_e does not depend on e} f_p^0 = f_{e_1}^0= f_{e_2}^0, \hbox{ for every minimal projection } p\in C.
\end{equation} Suppose $ p = \left( \begin{array}{cc}
\alpha & \gamma\sqrt{\alpha (1-\alpha)} \\
\overline{\gamma} \sqrt{\alpha(1-\alpha)} & 1-\alpha
\end{array}  \right),$  ($\alpha\in [0,1],$ $\gamma\in \mathbb{T}$) is a minimal projection in $C$ ---in case that $C= S_2$ we have $\gamma =1$. Observe that by \eqref{eq additivity of Delta on Ce1 + Ce2} and the properties of $\Delta_0$ we have 
$$\begin{aligned}
f_{e_1}^0 (\lambda) e_1 + f_{e_2}^0 (\lambda) e_2 &= f_{e_1}^0 (\lambda) \Delta_0 (e_1) + f_{e_2}^0 (\lambda)\Delta_0(e_2) = \Delta_0 (\lambda (e_1+ e_1)) = \Delta_0 (\lambda I) \\
&= \Delta_0 (\lambda p + \lambda (I-p)) =f_{p}^0 (\lambda) \Delta_0 (p) + f_{I-p}^0 (\lambda)\Delta_0(I-p) \\
&= f_{p}^0 (\lambda) p + f_{I-p}^0 (\lambda) (I-p).
\end{aligned}$$ The arbitrariness of $p$ (respectively, of $\alpha$ and $\gamma$) implies $f^0_{e_1} =f^0_{e_2} = f^0_{p}$ for every minimal projection $p$. This finishes the proof of the claim. We can therefore write $f^0$ for any of the mappings $f_{p}^0$. We also know that $f^0$ is the identity on $\mathbb{R}$.\smallskip

Every minimal tripotent $v\in M_2$ admits the following matrix representation:
\begin{equation}\label{alpha beta delta}
	v = \left( \begin{array}{cc}
		\alpha & \beta \\
		\gamma & \delta
	\end{array}  \right), \quad \text{satisfying} \begin{cases}
		|\alpha|^2 +|\delta|^2 + |\beta|^2 + |\gamma|^2 = 1,\\
		\alpha \delta - \beta \gamma = 0,
	\end{cases}
\end{equation} where $\alpha, \beta, \gamma, \delta \in \CC$ (cf. \cite[Lemma 3.10]{Polo_Peralta_AdvMath_2018}) ---in $S_2$ we have $\gamma = \beta$. Consider the next minimal projection $p_1= \left( \begin{array}{cc}
	1/2 & 1/2 \\
	1/2 & 1/2
\end{array}  \right) $. We must have $\Delta_0 (v) =   \left( \begin{array}{cc}
	\tilde{\alpha} & \tilde{\beta} \\
	\tilde{\gamma} & \tilde{\delta}
\end{array}  \right)$ for some $\tilde{\alpha}, \tilde{\beta}, \tilde{\delta}, \tilde{\gamma} \in \CC$ satisfying analogous conditions to those in \cref{alpha beta delta} (cf. \cref{prop basic properties Jordan}).\smallskip

We begin by proving that $\tilde{\alpha} = f^0 (\alpha),$ equivalently, $\Re\hbox{e}(f^0(\alpha)) = \Re\hbox{e}(\alpha)$, $\Im\hbox{m}(f^0(\alpha))$ $= \Im\hbox{m}(\alpha)$. Namely, since $\alpha e_{1}$ and $\delta e_{2}$ are truncations of $v$ and $\Delta_0$ preserves the truncation of tripotents (cf. \cref{r Delta preserves the truncation of tripotents}), it follows that $\tilde{\alpha}=f^0 (\alpha)$ and $\tilde{\delta} =f^0 (\delta)$.\smallskip

Suppose that $\alpha +\beta +\gamma +\delta \neq 0$. Since $\Delta_0(p_1) = p_1,$ and $P_{2}(p_1)(v) = \frac{\alpha +\beta+\gamma +\delta}{2} p_1$, we obtain that $\frac{\alpha +\beta+\gamma  +\delta}{2}p$ is a truncation of $v$ and 
\begin{equation}\label{equation f0}
	f^0\left(\frac{\alpha +\beta +\gamma  +\delta}{2}\right) = \frac{\tilde{\alpha}+\tilde{\beta} +\tilde{\gamma} + \tilde{\delta}}{2} = \frac{f^0(\alpha)+\tilde{\beta}+ \tilde{\gamma} +f^0(\delta)}{2}.  
\end{equation}
In particular, \eqref{equation f0} holds for the minimal tripotents
\begin{align*}
	v_1=\frac{1}{4}\left(\begin{array}{cc}
		\alpha & 1 \\
		1 & \overline{\alpha}
	\end{array}  \right), \mbox{ and }  v_2=\frac{1}{4}\left(\begin{array}{cc}
		-\alpha & 1 \\
		1 & -\overline{\alpha}
	\end{array}  \right) \hbox{ (both in $S_2$) }
\end{align*} where $|\alpha| = 1$, and their images $\Delta_0 (v_j) =   \left( \begin{array}{cc}
\tilde{\alpha}_j & \tilde{\beta}_j \\
\tilde{\gamma}_j & \tilde{\delta}_j
\end{array}  \right)$ ($j=1,2$). Since $v_1 \perp v_2$, it follows from \eqref{eq additivity of Delta on Ce1 + Ce2 two} and \eqref{eq Delta behaves real linearly on real combinations of unitaries} that $$\begin{aligned} f^0\left(\frac12 \alpha\right) e_1+ \overline{f^0\left(\frac12 {\alpha}\right)} e_2 &= 
f^0\left(\frac12 \alpha\right) e_1+ f^0\left(\frac12 \overline{\alpha}\right) e_2= \Delta_0 \left( \frac12 \left(\begin{array}{cc}
	\alpha & 0 \\
	0 & \overline{\alpha}
\end{array}  \right) \right)   \\
&= \Delta_0 (v_1 -v_2)= \Delta_0 (v_1) - \Delta_0 (v_2),
\end{aligned}$$ which implies that $\tilde{\beta}_1 = \tilde{\beta}_2$ and $\tilde{\gamma}_1 = \tilde{\gamma}_2$. Therefore, a simple computation with the identity in \eqref{equation f0} gives $\Re\hbox{e}(f^0(\alpha)) = \Re\hbox{e}(\alpha),$ for all $\alpha \in \mathbb{T}$. \smallskip

By \cref{cor of fmultiplicative} $f^0 (i) \in \{\pm i\}$, and thus 
$$ -\Im\hbox{m}(\alpha) =\Re\hbox{e}(f^0(i\alpha)) = \Re\hbox{e}((\pm i) f^0(\alpha)) = \mp \Im\hbox{m}(f^0(\alpha)),$$ for all $\alpha \in \mathbb{T}$. The properties of $f^0$ guarantee that $f^0 (\lambda) = \lambda $ or $f^0 (\lambda) = \overline{\lambda}$ for all $\lambda \in \mathbb{C}$. Up to composing with the ``natural'' (entry-wise) conjugation on $C$, we can always assume that $f^0 (\lambda) = \lambda $ for all $\lambda \in \mathbb{C}$. \smallskip

In our penultimate step we prove that, under the previous assumptions, $f_e^0 (\lambda) =\lambda $ for all  $\lambda \in \mathbb{C}$ and every minimal tripotent $e$ in $C$. This is now an easy consequence of what we proved above, just observe that for each minimal tripotent $e\in C$ we can find another minimal tripotent $\tilde{e}$ which is orthogonal to $e$, and hence $u=e+\tilde{e}$ is a unitary in $C$. Every unitary can be written in the form $u = \gamma_1 p + \gamma_2 (I-p),$ where $p$ is a minimal projection in $C$ and $\gamma_j\in \mathbb{C}$.  We deduce from all the previous properties that $$\begin{aligned} f_e^0(\lambda) \Delta_0 (e ) + f_{\tilde{e}}^0 (\lambda) \Delta_0 (\tilde{e})&= \Delta_0 (\lambda (e+\tilde{e}) )= \Delta_0(\lambda u) = \Delta_0 (\lambda (\gamma_1 p + \gamma_2 (I-p)) ) \\
&= f^0 (\lambda \gamma_1)  p + f^0(\lambda \gamma_2)  (I-p) = \lambda \Delta_0 (\gamma_1 p + \gamma_2 (I-p) ) \\
&= \lambda \Delta_0 (u)  = \lambda \Delta_0 (e ) + \lambda\Delta_0 (\tilde{e}), 
\end{aligned}$$ which leads to $f_e^0 (\lambda)= \lambda$ for all complex $\lambda$. \smallskip

Finally, the restriction of $\Delta_0$ to the set of all tripotents of $C$ is a bijection preserving order and orthogonality in both directions (cf. \cref{lem pres tripotents and leq} and \cref{lem orthogonal tripotents}). We also have $\Delta_0 (\lambda e) = \lambda \Delta_{0} (e)$ for every minimal tripotent $e\in C$ and every complex $\lambda$. Therefore, by \cite[Theorem 4.3]{Fried_Peralta_Ann_Math_Phys_2022}, applied to $\Delta_0|_{\mathcal{U}(C)} : \mathcal{U}(C)\to \mathcal{U}(C),$ there exists a linear triple automorphism $T_0:C\to C$ satisfying $T_0 (v) = \Delta_0(v)$ for all tripotent $v\in C$. We are in a position to apply the identity principle in \cref{thm of extension T and Delta} to deduce that $\Delta_0 = T_0$ is a linear triple automorphism. It follows that $\Delta$ itself is a linear or a conjugate-linear triple automorphism on $C$.
\end{proof}

\section{Preservers of the truncation of triple products on atomic \texorpdfstring{JBW$^*$}{}-triples}\label{sec: main conclusions}

In this final section we shall study bijections preserving the truncation of triple products between atomic JBW$^*$-triples. Let us fix two atomic JBW$^*$-triples $A=\bigoplus\limits_{i\in \Gamma_1}^{\ell_{\infty}} C_i$ and $B=\bigoplus\limits_{j\in \Gamma_2}^{\ell_{\infty}} D_j$ where $C_i$, and $D_j$ are Cartan factors, and a {\rm(}non-necessarily linear{\rm)} bijection $\Delta :A\to B$ preserving the truncation of triple products in both directions.\smallskip

Our first comment is to note that one-dimensional Cartan factors must be excluded. Namely, we can consider standard counter-examples (\cite{FriHak1988,Chabb2017, ChabbMbekhta2017,EssPe2018}) as $\Delta: \mathbb{C}\to \mathbb{C}$, $\Delta (\lambda) = \lambda^{-1}$ if $\lambda \neq 0$ and $\Delta(0)=0,$ is a bijection preserving the truncation of triple products in both directions, which is not additive. We shall therefore assume that both $A$ and $B$ do not contain 1-dimensional factors.\smallskip 

Fix $i\in \Gamma_1$. Observe that, by minimality, each minimal tripotent $e\in C_i$ belongs to a unique Cartan factor in $B$, that is, there exists $j\in \Gamma_2$ such that $\Delta (e)\in D_j$ (cf. \cref{lem pres tripotents and leq}). For any other $v\in \mathcal{U}_{min} (C_i)$ there exists $w\in \mathcal{U}_{min} (C_i)$ such that $e,v\not\perp w$ (cf. \cite[Lemma 3.10]{Polo_Peralta_AdvMath_2018} when the rank is $\geq 2$, while in the rank-one case, i.e. Hilbert spaces the statement is even more clear), and hence $\Delta(e),\Delta(v)\not\perp \Delta (w),$ which is possible if and only if $\Delta(v), \Delta (w)\in D_j$. Therefore, there exists a unique $\sigma(i)\in \Gamma_2$ such that $\Delta (\mathcal{U}_{min} (C_i)) = \mathcal{U}_{min} (D_{\sigma(i)})$. Clearly, the mapping $\sigma: \Gamma_1 \to \Gamma_2$ is a bijection. Furthermore, if we take a non-zero $a\in C_i$, there is a minimal tripotent $e\in C_i$ admitting a non-zero scalar multiple which is a truncation of $a$, and thus the same occurs to $ \Delta (e)$ and $\Delta (a)$. However, for any other minimal tripotent $\Delta (w) \in  \mathcal{U}_{min}(D_{\sigma(i_1)}) = \Delta\left(\mathcal{U}_{min}(C_{i_1})\right),$ we have $\{w,a,w\} =0$ and thus $\{\Delta(w),\Delta(a),\Delta(w)\} =0$ (see \cref{lem preserve annihil}$(ii)$), which is only possible if $\Delta (a)\in D_{\sigma(i)}$. We have actually shown that $\Delta (C_i) = D_{\sigma(i)}$. Observe that \cref{lem orthogonal tripotents} assures that the rank of $C_i$ is $\geq n$ if and only if the rank of $D_{\sigma(i)}$ is. \smallskip 

We gather the previous conclusion in the next lemma. 

\begin{lem}\label{l Delta maps a factor onto a factor} There exists a bijection $\sigma: \Gamma_1\to \Gamma_2$ satisfying that for each $i\in \Gamma_1$, $\Delta$ maps $C_i$ onto $D_{\sigma(i)}$. Moreover, the rank of $C_i$ is greater than or equal to $n$ if and only if the same happens to the rank of $D_{\sigma(i)}$.
\end{lem}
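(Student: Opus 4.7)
The plan is essentially to make rigorous the paragraph preceding the lemma, which already sketches the argument. I would proceed in four steps.

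\textbf{Step 1 (Defining $\sigma$ on minimal tripotents).} Fix $i\in \G_1$ and pick any minimal tripotent $e\in C_i$. By \cref{lem pres tripotents and leq}, $\Delta(e)$ is a minimal tripotent in $B$; since the minimal tripotents of $B$ partition according to the summands $D_j$, there is a unique $j = \sigma(i) \in \G_2$ with $\Delta(e) \in D_{\sigma(i)}$. To see this does not depend on the choice of $e$, take any other minimal tripotent $v\in C_i$. Inside a Cartan factor of rank at least $2$ one can find a minimal tripotent $w\in C_i$ with $e\not\perp w$ and $v\not\perp w$ by \cite[Lemma 3.10]{Polo_Peralta_AdvMath_2018}; if $C_i$ is a rank-one Cartan factor (a Hilbert space) the analogous statement is elementary, after excluding the trivial dimension-one case by hypothesis. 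Applying \cref{lem preserve annihil}\cref{lem preserve annihil 2 orth to ann} in both directions (via $\Delta$ and $\Delta^{-1}$), non-orthogonality of minimal tripotents is preserved, and hence $\Delta(e),\Delta(v),\Delta(w)$ are pairwise non-orthogonal minimal tripotents in $B$. Since two minimal tripotents sitting in distinct Cartan summands of an atomic JBW$^*$-triple are automatically orthogonal, $\Delta(v)\in D_{\sigma(i)}$. So $\Delta(\mathcal{U}_{min}(C_i))\sst \mathcal{U}_{min}(D_{\sigma(i)})$. Running the same argument with $\Delta^{-1}$ yields the reverse inclusion, so $\Delta(\mathcal{U}_{min}(C_i)) = \mathcal{U}_{min}(D_{\sigma(i)})$ and $\sigma$ is a bijection.

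\textbf{Step 2 (Extending to full Cartan factors).} Now I claim $\Delta(C_i)=D_{\sigma(i)}$. Take any nonzero $a\in C_i$ and any $i_1\in \G_1$ with $i_1\neq i$. For each minimal tripotent $w\in C_{i_1}$, the orthogonality $w\perp a$ holds, so $\{w,a,w\}=0$, i.e.\ $Q(w)(a)=0$. By \cref{lem preserve annihil}\cref{lem preserve annihil 1}, $Q(\Delta(w))(\Delta(a))=0$. Since by Step 1 $\Delta(w)$ ranges over all of $\mathcal{U}_{min}(D_{\sigma(i_1)})$ as $w$ ranges over $\mathcal{U}_{min}(C_{i_1})$, and since the minimal tripotents of $D_{\sigma(i_1)}$ generate it as a JBW$^*$-triple (by atomicity of the Cartan factor), it follows that $Q(x)(\Delta(a))=0$ for all $x\in D_{\sigma(i_1)}$, so the Peirce-$2$ component of $\Delta(a)$ with respect to every minimal tripotent of $D_{\sigma(i_1)}$ vanishes; by the atomic decomposition of $D_{\sigma(i_1)}$ this forces the projection of $\Delta(a)$ onto $D_{\sigma(i_1)}$ to be zero. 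Letting $i_1$ vary over $\G_1\setminus\{i\}$ and using that $B=\bigoplus_{j}^{\ell_\infty} D_j$, we conclude $\Delta(a)\in D_{\sigma(i)}$, i.e.\ $\Delta(C_i)\sst D_{\sigma(i)}$. Symmetrically with $\Delta^{-1}$ one gets equality $\Delta(C_i)=D_{\sigma(i)}$.

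\textbf{Step 3 (Rank preservation).} If the rank of $C_i$ is at least $n$, there exist mutually orthogonal minimal tripotents $e_1,\dots,e_n\in C_i$. By \cref{lem orthogonal tripotents}, $\Delta(e_1),\dots,\Delta(e_n)$ are mutually orthogonal (minimal) tripotents in $B$, and all lie in $D_{\sigma(i)}$ by Step 1, so the rank of $D_{\sigma(i)}$ is at least $n$. Applying the same reasoning to $\Delta^{-1}$ yields the converse.

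\textbf{Main obstacle.} The only delicate point is the argument in Step 2 that $Q(x)(\Delta(a))=0$ for every $x\in D_{\sigma(i_1)}$ forces the Peirce-$2$ projection of $\Delta(a)$ onto $D_{\sigma(i_1)}$ to vanish. The cleanest way is to note that if $r\in D_{\sigma(i_1)}$ is a nonzero tripotent that dominates the range tripotent of the component of $\Delta(a)$ in $D_{\sigma(i_1)}$, one can exhibit a minimal tripotent $w_0\le r$ with $Q(w_0)(\Delta(a))\neq 0$, contradicting what has been shown. Alternatively, one appeals directly to \cref{l quadratic annihilator of an element}: from $Q(\Delta(w))(\Delta(a))=0$ for every minimal $\Delta(w)\in D_{\sigma(i_1)}$, density of the span of minimal tripotents in $D_{\sigma(i_1)}$ together with weak$^*$-separate continuity of the triple product yields $Q(x)(\Delta(a))=0$ for all $x\in D_{\sigma(i_1)}$, whence $\Delta(a)\in {^{\perp_q} D_{\sigma(i_1)}}$, and Peirce arithmetic with the decomposition $B=D_{\sigma(i_1)}\oplus (\bigoplus_{j\neq \sigma(i_1)} D_j)$ forces the $D_{\sigma(i_1)}$-component of $\Delta(a)$ to vanish.
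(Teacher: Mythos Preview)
Your outline follows the paper's argument essentially verbatim: define $\sigma$ via minimal tripotents using the connectivity result \cite[Lemma 3.10]{Polo_Peralta_AdvMath_2018}, extend to all of $C_i$ by showing the other Cartan components of $\Delta(a)$ vanish, and read off rank preservation from \cref{lem orthogonal tripotents}. Two points deserve correction, though neither undermines the strategy.

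In Step~1, the appeal to \cref{lem preserve annihil}\cref{lem preserve annihil 2 orth to ann} does not deliver what you claim. Applied to $\Delta^{-1}$, that lemma only yields $Q(e)(w)=Q(w)(e)=0$ from $\Delta(e)\perp\Delta(w)$; for minimal tripotents this merely says $w\in E_0(e)\oplus E_1(e)$ and $e\in E_0(w)\oplus E_1(w)$, which is also satisfied when $e\top w$ are collinear, not only when $e\perp w$. The correct tool is \cref{lem orthogonal tripotents}, which (together with its analogue for $\Delta^{-1}$) gives genuine preservation of orthogonality among tripotents in both directions, hence of non-orthogonality.

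In your ``Main obstacle'' paragraph, the first route (find $w_0\le r(b)$ with $Q(w_0)(\Delta(a))\neq 0$) is sound. The alternative via weak$^*$-density is not: the map $x\mapsto Q(x)(\Delta(a))=\{x,\Delta(a),x\}$ is quadratic in $x$, so separate weak$^*$-continuity of the triple product does not let you pass from $Q(w)(\Delta(a))=0$ for minimal $w$ to $Q(x)(\Delta(a))=0$ for general $x$ without first knowing the off-diagonal terms $\{w_1,\Delta(a),w_2\}$ vanish. A cleaner fix than either of your routes: $Q(w)(\Delta(a))=0$ implies $P_2(w)(\Delta(a))=Q(w)^2(\Delta(a))=0$, i.e.\ $\varphi_{w}(\Delta(a))=0$ for every pure atom $\varphi_w$ of $D_{\sigma(i_1)}$; since pure atoms separate points in an atomic JBW$^*$-triple, the $D_{\sigma(i_1)}$-component of $\Delta(a)$ vanishes.
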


For each minimal tripotent $e\in A$ we shall denote by $f_e: \mathbb{C}\to \mathbb{C}$ the bijection defined in page~\pageref{def of fe} satisfying $\Delta (\lambda e) = f_e (\lambda) \Delta(e)$ for all complex $\lambda$. We shall see next that if $e$ belongs to Cartan factor in $A$ with rank $\geq 2$, the mapping $f_e$ is just the identity or the conjugation on $\mathbb{C}$, and actually tripotents in the same factor share the same mapping $f_e$.

Let us recall some definitions from \cite{Dang_Friedm_MathScand_1987}. Given two tripotents $u,v$ in a JB$^*$-triple $E$, we say that $u$ is \textit{colinear} to $v$ ($u \top v$ in short) if $u \in E_{1}(v)$ and $v \in E_{1}(u)$. In case that $u \in E_{1}(v)$ and $v \in E_{2}(u)$ we say $u$ \textit{governs} $v$ ($u \vdash v$ in short). An ordered triplet $(v, u, \tilde{v})$ of tripotents in $E$ is called a \textit{trangle} if $v \perp \tilde{v}$, $u \vdash v,\tilde{v}$ and $\tilde{v} = Q(u) (v)$.  An ordered quadruple $(v_{1}, v_{2}, v_{3}, v_{4})$ is called a \textit{quadrangle} if $v_{1} \perp v_{3}$, $v_{2} \perp v_{4}$, $v_{1} \top v_{2} \top v_{3} \top v_{4} \top v_{1}$ and $\{ v_{1}, v_{2},v_{3}\} = \frac{1}{2} v_{4}$ (note that, by the Jordan identity \eqref{Jordan identity}, the last equation holds whenever the indices are permuted cyclically).
\smallskip

\begin{prop}\label{p fe is idenity or conjugation and all coincide on eeach factor} Let $e$ be a minimal tripotent belonging to a Cartan factor of rank $\geq 2$ in $A$. The following statements hold:
\begin{enumerate}[$(a)$]
\item Then $f_e (\lambda) = \lambda $ for all $\lambda\in \mathbb{C},$ or $f_e (\lambda) = \overline{\lambda} $ for all $\lambda\in \mathbb{C}.$  
\item If $v$ is another minimal tripotent in $A$ belonging to the same Cartan factor containing $e$, we have $f_e = f_v$.
\end{enumerate}
\end{prop}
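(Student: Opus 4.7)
For part~(a), since $C_i$ has rank at least $2$, I would first pick a minimal tripotent $e'\in C_i$ orthogonal to $e$. By the discussion at the beginning of \cref{subsec:spin factors} (and \cite{Kal_Peralta_Ann_Math_Phys_2021}), the Peirce-$2$ subspace $X:=E_2(e+e')$ is a spin factor. Combining \cref{Peirce2toPeirce2} with \cref{lem orthogonal tripotents}, the image $\Delta(X)=F_2(\Delta(e)+\Delta(e'))$ is a spin factor sitting inside $D_{\sigma(i)}$ (cf.\ \cref{l Delta maps a factor onto a factor}), and $\Delta|_X:X\to\Delta(X)$ is again a bijection preserving the truncation of triple products in both directions between two spin factors.

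Next, \cref{r preservers of truncations between spin factors}(d) supplies JB$^*$-subtriples $M\subseteq X$ and $N:=\Delta(M)\subseteq\Delta(X)$ with $e\in M$ and either $M,N\cong M_2$ or $M,N\cong S_2$. The restriction $\Delta|_M:M\to N$ is still a bijection preserving the truncation of triple products in both directions, so conjugating by triple isomorphisms $M\to C$ and $N\to C$ with $C\in\{M_2,S_2\}$ yields a self-bijection $\widetilde\Delta:C\to C$ to which \cref{linear at identity real} applies. Hence $\widetilde\Delta$ is a linear or a conjugate-linear triple automorphism, and translating back through the identifications gives $\Delta(\lambda e)=\lambda\Delta(e)$ for all $\lambda\in\mathbb{C}$ or $\Delta(\lambda e)=\overline\lambda\,\Delta(e)$ for all $\lambda\in\mathbb{C}$; this is exactly the dichotomy $f_e(\lambda)=\lambda$ or $f_e(\lambda)=\overline\lambda$.

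For part~(b), the key point is that the dichotomy provided by \cref{linear at identity real} is \emph{uniform}: whenever $\Delta$ is restricted to a subtriple $M\cong M_2$ or $S_2$ as above, the same choice (linear vs.\ conjugate-linear) is forced on every minimal tripotent of $M$. Consequently, as soon as two minimal tripotents $e,v\in C_i$ can be placed simultaneously inside a common $M_2$ or $S_2$ subtriple, $f_e=f_v$ follows automatically. If $e\perp v$, such a subtriple exists inside the spin factor $E_2(e+v)$ by \cref{r preservers of truncations between spin factors}(d), so this case is immediate.

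For an arbitrary pair $e,v$ I would connect them by a finite chain $e=w_0,w_1,\ldots,w_n=v$ of minimal tripotents in $C_i$ for which each consecutive pair $(w_k,w_{k+1})$ is orthogonal, thereby transmitting $f_{w_k}=f_{w_{k+1}}$ along the chain. When $\mathrm{rank}(C_i)\ge 3$ one simply inserts an auxiliary minimal $w\in C_i$ with $w\perp e$ and $w\perp v$, reducing to two instances of the orthogonal case. When $\mathrm{rank}(C_i)=2$, an auxiliary $w$ orthogonal to both need not exist, so a short case analysis on the type of $C_i$ is called for: if $C_i$ itself is a spin factor, \cref{r preservers of truncations between spin factors}(d) produces a common $M_2$ or $S_2$ subtriple through $e$ and $v$ directly; if $C_i$ is of type I (so $C_i\cong B(\mathbb{C}^2,H)$), then $e$ and $v$ are jointly contained in a $B(\mathbb{C}^2,\mathbb{C}^2)\cong M_2$ subtriple; the remaining rank-$2$ types are handled similarly using their grid structure. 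The main obstacle I anticipate is precisely this rank-$2$ verification, since it requires checking that every rank-$2$ Cartan factor contains a sufficiently rich collection of $M_2$ or $S_2$ subtriples to connect any two given minimal tripotents.
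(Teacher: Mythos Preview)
Your argument for part~(a) is essentially the paper's argument. For part~(b), your chain/case-analysis route is workable but more laborious than what the paper does. The paper avoids the orthogonal-chain argument entirely by invoking \cite[Lemma 3.10]{Polo_Peralta_AdvMath_2018}: for \emph{any} two minimal tripotents $e,v$ in a Cartan factor $C$ (of rank $\ge 2$), there is either a quadrangle $(e,v_2,v_3,v_4)$ or a trangle $(e,u,v_3)$ of tripotents in $C$, with $v_3\perp e$ minimal, such that $v$ is a linear combination of the quadrangle/trangle elements. In particular $e$ and $v$ both lie in the spin factor $C_2(e+v_3)$, and \cref{r preservers of truncations between spin factors}(d) then produces a common $M_2$ or $S_2$ subtriple through $e$ and $v$ in one stroke, for every rank and every Cartan type simultaneously. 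Your approach trades this single structural lemma for an elementary connectivity argument; the cost is the rank-$2$ case split you flag as the main obstacle (which must also cover the type~$5$ factor $M_{1,2}(\mathbb{O})$), while the benefit is that it does not rely on knowing the quadrangle/trangle description of minimal tripotents.
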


\begin{proof} Let us try to prove both statements at the same time. So, we pick minimal tripotents $e,v\in C_i$ and we assume that $C_i$ has rank $\geq 2$. To simplify the notation we shall write $C$ for $C_i$ and $D$ for $D_{\sigma(i)}$. By \cite[Lemma 3.10]{Polo_Peralta_AdvMath_2018} one of the next facts holds: 
\begin{enumerate}[$(i)$]
\item There exist minimal tripotents $v_{2}, v_{3},v_{4}$ in $C$ and complex numbers $\alpha, \beta, \gamma, \delta$ such that $(e, v_{2}, v_{3},v_{4})$ is a quadrangle, $|\alpha|^2 + |\beta|^2 + |\gamma|^2 +|\delta|^2= 1$, $\alpha \delta = \beta \gamma$ and $v= \alpha e+ \beta v_{2} + \gamma v_{4} +\delta v_{3}.$
\item There exist a minimal tripotent $v_4 \in C$, a rank two tripotent $u \in A$, and complex numbers $\alpha, \beta, \delta$ such that $(e,u, v_4)$ is a trangle, $|\alpha|^2 + 2|\beta|^2  +|\delta|^2= 1$, $\alpha \delta = \beta^2$, and $v = \alpha e + \beta u + \delta \tilde{e}$. 
\end{enumerate} 
The tripotents $e+v_3\in C$ and $\Delta(e+v_3) = \Delta(e)+ \Delta (v_3)\in D$ (see \cref{lem orthogonal tripotents}). As we have commented in \cref{subsec:spin factors}, the Peirce-2 subspaces $C_2(e+v_3)$ and $D_2 (\Delta(e+v_3))$ are two spin factors (cf. \cite[Lemma 3.6]{Kal_Peralta_Ann_Math_Phys_2021}). \cref{Peirce2toPeirce2} assures that $\Delta|_{C_2(e+v_3)} :C_2(e+v_3)\to D_2 (\Delta(e+v_3))$ is a bijection preserving the truncation of triple products in both directions. Since $e,v\in C_2(e+v_3)$, we can apply \cref{r preservers of truncations between spin factors}$(d)$ to find two JB$^*$-subtriples $M\subseteq C_2(e+v_3)$, $N\subseteq  D_2 (\Delta(e+v_3))$, $e,v \in M,$ $\Delta(M) =N$, and $M,N\cong M_2$ or $M,N\cong S_2$.  In this case \cref{linear at identity real} proves that $\Delta|_{M}$ is linear or conjugate linear, and consequently, $\Delta (\lambda e) = \lambda \Delta(e)$ and  $\Delta (\lambda v) = \lambda \Delta(v)$ for all complex $\lambda,$ or $\Delta (\lambda e) = \overline{\lambda} \Delta(e)$ and  $\Delta (\lambda v) = \overline{\lambda} \Delta(v)$ for all complex $\lambda$, which concludes the proof.
\end{proof}

Thanks to the previous proposition we can classify the Cartan factors in $A$'s decomposition in the following way: $\Gamma_{1,rk1} = \{i\in \Gamma_1 :  \hbox{ rank } C_i =1 \},$
$$\Gamma_{1,l} = \{i\in \Gamma_1 : f_e (\lambda) = \lambda \hbox{ for all } \lambda \in \mathbb{C}, e\in \mathcal{U}_{min}(C_i), \hbox{ rank } C_i \geq 2\}$$ and $$\Gamma_{1,cl}  = \{i\in \Gamma_1 : f_e (\lambda) = \overline{\lambda} \hbox{ for all } \lambda \in \mathbb{C}, e\in \mathcal{U}_{min}(C_i), \hbox{ rank } C_i \geq 2\}.$$ Clearly, $C_i$ and $D_{\sigma(i)}$ have rank-one for all $i\in \Gamma_{1,rk1}$, and $\Delta (\lambda e) = \lambda \Delta(e)$ (respectively, $\Delta (\lambda e) = \overline{\lambda} \Delta(e)$) for all $\lambda\in \mathbb{C},$ and every minimal tripotent $e\in \bigoplus\limits_{i\in \Gamma_{1,l}}^{\ell_{\infty}} C_i$ \Big(respectively, $e\in \bigoplus\limits_{i\in \Gamma_{1,cl}}^{\ell_{\infty}} C_i$\Big).\smallskip

Dealing with rank-one Cartan factors requires additional hypotheses. It is known that a Cartan factor has rank-one if and only if it is a Hilbert space regarded as a type 1 Cartan factor. We have also commented that we must avoid the 1-dimensional factors. 

\begin{prop}\label{p Hilbert spaces rank1CF} Let $\Delta: H\to K$ be a {\rm(}non-necessarily linear{\rm)} bijection preserving the truncation of triple products in both directions, where $H$ and $K$ are two complex Hilbert spaces of dimension $\geq 2$ regarded as type 1 Cartan factors. Suppose additionally that $\Delta$ is continuous. Then $\Delta$ is a linear or conjugate-linear isometry preserving triple products.   
\end{prop}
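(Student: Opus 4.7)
The plan is to extract from $\Delta$ an inner-product preservation identity on the unit sphere, and then pin down the resulting Wigner-type symmetry by a Pythagorean calculation. Recall that when $H$ is regarded as a type 1 Cartan factor, the triple product reads $\{x,y,z\}=\tfrac12(\langle x,y\rangle z+\langle z,y\rangle x)$, the minimal tripotents are precisely the unit vectors, every nonzero vector is a scalar multiple of a unit vector, and the Peirce decomposition of a unit vector $e$ gives $H_{2}(e)=\mathbb{C}e$, $H_{1}(e)=\{e\}^{\perp}$, $H_{0}(e)=\{0\}$. In particular JB$^*$-triple orthogonality on $H$ forces one of the two vectors to vanish, so the spin-factor machinery of \cref{subsec:spin factors} is unavailable and the continuity hypothesis becomes essential.

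First, I will combine \cref{lem pres tripotents and leq}, \cref{Peirce2toPeirce2} and \cref{cor of fmultiplicative} with the continuity of $\Delta$ to show that $\Delta$ restricts to a bijection $S_H\to S_K$ of unit spheres and that, for every $e\in S_H$, the map $f_e:\mathbb{C}\to\mathbb{C}$ determined by $\Delta(\lambda e)=f_e(\lambda)\Delta(e)$ is a continuous multiplicative bijection with $f_e(\overline{\lambda})=\overline{f_e(\lambda)}$ and $f_e(\mathbb{T})=\mathbb{T}$. A routine classification of such maps forces $f_e(\lambda)=|\lambda|^{\alpha_e-1}\lambda$ or $f_e(\lambda)=|\lambda|^{\alpha_e-1}\overline{\lambda}$ for some $\alpha_e>0$. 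To glue the $f_e$'s together I will exploit that $\langle e,v\rangle v$ is a truncation of $e=Q(e)(e)$ for $e,v\in S_H$; applying the preserver hypothesis and the identity $\{v,e,v\}=\overline{\langle e,v\rangle}v$, and comparing the coefficients of $\Delta(v)$, yields
\[\langle\Delta(e),\Delta(v)\rangle=f_{v}(\langle e,v\rangle)\qquad(e,v\in S_H).\]
Symmetrising in $(e,v)$ and invoking conjugation-equivariance forces $f_e(\langle e,v\rangle)=f_v(\langle e,v\rangle)$. Evaluating the two continuous maps $v\mapsto f_v(2)\in(0,+\infty)$ and $v\mapsto f_v(i)\in\{i,-i\}$ on the connected sphere $S_H$ then shows that the exponent $\alpha_v$ and the choice of type are both constant, so all of the maps $f_e$ coincide with a single $f$.

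The main obstacle is to show that the common exponent $\alpha$ equals $1$. Since $f(0)=0$, $\Delta$ sends Hilbert-orthogonal unit vectors to Hilbert-orthogonal unit vectors, and the same observation applied to $\Delta^{-1}$ shows that, for any orthogonal pair $e_1,e_2\in S_H$, $\Delta$ restricts to a bijection between $S_H\cap\{e_1,e_2\}^{\perp}$ and $S_K\cap\{\Delta(e_1),\Delta(e_2)\}^{\perp}$; in particular the $\Delta$-image of the former spans $\{\Delta(e_1),\Delta(e_2)\}^{\perp}$. For $w=(e_1+e_2)/\sqrt{2}\in S_H$, the identities $\langle w,u\rangle=0$ for every unit $u\in\{e_1,e_2\}^{\perp}$ then force $\Delta(w)\perp\{\Delta(e_1),\Delta(e_2)\}^{\perp}$, i.e.\ $\Delta(w)\in\operatorname{span}\{\Delta(e_1),\Delta(e_2)\}$. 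Reading off the Hilbert coefficients, $\Delta(w)=2^{-\alpha/2}\Delta(e_1)+2^{-\alpha/2}\Delta(e_2)$, and the equality $1=\|\Delta(w)\|^{2}=2^{1-\alpha}$ forces $\alpha=1$. (When $\dim H=2$ a short preliminary step shows that $\dim K=2$ and the spanning step becomes automatic.)

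With $f$ equal to the identity or to complex conjugation, I will extend $\langle\Delta(e),\Delta(v)\rangle=f(\langle e,v\rangle)$ from unit vectors to arbitrary $x,y\in H$ by writing $x=\|x\|\,(x/\|x\|)$ and using $f(\|x\|)=\|x\|$; this gives $\|\Delta(x)\|=\|x\|$ and $\langle\Delta(x),\Delta(y)\rangle=f(\langle x,y\rangle)$ on all of $H$. Expanding $\|\Delta(x+y)-\Delta(x)-\Delta(y)\|^{2}$ and using that $\operatorname{Re}$ absorbs the conjugation hidden in $f$, the standard Pythagorean cancellation produces $\|\Delta(x+y)-\Delta(x)-\Delta(y)\|=0$, so $\Delta$ is additive; together with the $\mathbb{C}$-homogeneity (resp.\ conjugate-homogeneity) encoded by $f$, this shows $\Delta$ is a linear or conjugate-linear isometric bijection $H\to K$. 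A direct substitution into $\{x,y,z\}=\tfrac12(\langle x,y\rangle z+\langle z,y\rangle x)$ confirms in either case that $\Delta$ preserves triple products, finishing the proof.
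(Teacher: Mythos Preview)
Your argument is correct and runs parallel to the paper's, but with one genuine difference worth flagging. The paper, after observing that $f_e$ is a continuous multiplicative conjugation-equivariant bijection on $\mathbb{C}$, invokes \cite[Section 4.2.1]{DymMcKeanBook1972} to conclude directly that $f_e$ is the identity or complex conjugation, and then uses an orthonormal-basis expansion $\Delta(a)=\sum_i\langle\Delta(a)|\Delta(e_i)\rangle\Delta(e_i)$ together with a parity argument on $\Lambda_1,\Lambda_2$ to finish. Your route is more cautious: you allow the full family $f_e(\lambda)=|\lambda|^{\alpha_e-1}\lambda$ or $|\lambda|^{\alpha_e-1}\overline{\lambda}$, force uniformity of type and exponent across the sphere, and then kill the exponent via the norm identity $\|\Delta((e_1+e_2)/\sqrt{2})\|^2=2^{1-\alpha}$. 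This extra step is not redundant, since the maps $\lambda\mapsto|\lambda|^{\alpha-1}\lambda$ for $\alpha>0$ really do satisfy all the listed axioms for $f_e$; your Pythagorean pinning of $\alpha=1$ makes the argument self-contained where the paper leans on an external classification. Your additivity-by-polarisation is equivalent to the paper's basis computation.

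One small wording issue: the continuity of $v\mapsto f_v(2)\in(0,\infty)$ does \emph{not} by itself force $\alpha_v$ to be constant, since $(0,\infty)$ is connected. What actually gives constancy is the identity $f_e(\langle e,v\rangle)=f_v(\langle e,v\rangle)$ you derived: once the type is fixed, taking absolute values yields $|\langle e,v\rangle|^{\alpha_e}=|\langle e,v\rangle|^{\alpha_v}$, and choosing $e,v$ with $0<|\langle e,v\rangle|<1$ (always possible when $\dim H\ge2$) forces $\alpha_e=\alpha_v$ directly. Replace the $f_v(2)$ sentence with this observation and the argument is clean.
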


\begin{proof} The triple product in $H$ is given by $\{a,b,c\} = \frac12 \langle a| b\rangle c+  \frac12 \langle c| b\rangle a$ ($a,b,c\in H$). Every non-zero tripotent $e$ in $H$ is minimal and maximal, they actually coincide with the points in the unit sphere of $H$, $H_2(e) = \mathbb{C} e$, $H_1(e) = \{e\}^{\perp_2}_{H}$, $H_0(e) = \{0\}.$  By \cref{c preservation Perice 1 + 0} we know that $\Delta (\{e\}^{\perp_2}_{H}) = \{\Delta(e)\}^{\perp_2}_{H}$ for every minimal tripotent $e\in H$, where $\Delta(e)$ also is a minimal tripotent in $K$ (see \cref{lem pres tripotents and leq}).\smallskip
	
Let $\{e_j: j\in \Lambda\}$ be an orthonormal basis of $H$. By the properties of $\Delta$, the set $\{\Delta(e_j) : j\in \Lambda\}$ is an orthonormal basis of $K$ (cf. \cref{lem pres tripotents and leq} and \cref{lem orthogonal tripotents} and \cref{c preservation Perice 1 + 0}). 	The continuity of $\Delta$ also implies that, for each minimal tripotent $e\in H$, the mapping $f_e:\mathbb{C}\to \mathbb{C}$ is a continuous bijection preserving products and conjugation (see \cref{cor of fmultiplicative}). It is known that under these circumstances $f_{e} (\lambda) = \lambda$ for all $\lambda\in \mathbb{C}$ or $f_{e} (\lambda) = \overline{\lambda}$ for all $\lambda\in \mathbb{C}$ \cite[Section 4.2.1]{DymMcKeanBook1972}.\smallskip

Take a norm-one element $a\in H.$ Clearly, $a$ is a tripotent, and hence $\Delta(a )$ also is a non-zero tripotent (cf. \cref{lem pres tripotents and leq}), hence $\|\Delta(a)\|=\|a\| =1.$  Suppose $e$ is another norm-one element in $H$. If $\langle a , e\rangle =0$ we have $\Delta(e) \in \{\Delta(a)\}^{\perp_2}_{K}$. If $\langle a , e\rangle \neq 0$, the element $|\langle a, e\rangle|^{-2} {\langle a, e\rangle} e = (\overline{ \langle a, e\rangle } )^{-1} e$ is a truncation of $a$ (which is a tripotent). Therefore, by \cref{r Delta preserves the truncation of tripotents} we deduce that  
$\Delta \left(|\langle a, e\rangle|^{-2} {\langle a, e\rangle} e \right)$ is a truncation of $\Delta(a)$, that is, $\{\Delta(a), \Delta \left(|\langle a, e\rangle|^{-2} {\langle a, e\rangle} e \right) ,\Delta (a)\} = \Delta(a)$. \smallskip

Suppose we are in the case that $f_e(\lambda) = \lambda$ for all $\lambda \in\mathbb{C}$, for each $a\in H$ with $\langle a,e\rangle \neq 0$, we get 
$$ \Delta(a)=  |\langle a, e\rangle|^{-2} \overline{\langle a, e\rangle}  \{\Delta(a), \Delta \left( e \right) ,\Delta (a)\} = {{\langle a, e\rangle}}^{-1} \langle \Delta(a), \Delta(e)\rangle  \Delta(a),$$ which shows that  $\langle \Delta(a) | \Delta(e)\rangle = \langle a | e\rangle$ for every norm-one elements $a,e$ in $H$ with $\langle a,e\rangle \neq 0$. Actually the latter condition can be relaxed by \cref{c preservation Perice 1 + 0}. Thus $\langle \Delta(a) | \Delta(e)\rangle = \langle a | e\rangle$ for every norm-one elements $a,e$ in $H$, and hence  $\langle \Delta(a) | \Delta(e)\rangle = \langle a | e\rangle$ for every  $a,e$ in $H$.\smallskip

We similarly deduce that $\langle \Delta(a) | \Delta(e)\rangle = \langle e | a\rangle$ for every $a,e$ in $H$ if $f_{\frac{e}{\|e\|}} (\lambda)= \overline{\lambda}$ ($\forall \lambda\in \mathbb{C}$). \smallskip

Set $\Lambda_1:=\{i\in \Lambda : f_{e_i} (\lambda) = \lambda, \ \forall \lambda \}$ and $\Lambda_2:=\{i\in \Lambda : f_{e_i} (\lambda) = \overline{\lambda}, \ \forall \lambda \}$. Since $\{e_j\}_{j\in \Lambda}$ and $\{\Delta(e_j)\}_{j\in \Lambda}$ are orthonormal basis in $H$ and $K$, respectively, we have $$\Delta(a) = \sum_{i\in\Lambda} \langle \Delta(a)| \Delta(e_i)\rangle \Delta (e_i) = \sum_{i\in\Lambda_1} \langle a| e_i\rangle \Delta(e_i) + \sum_{i\in\Lambda_2} \langle e_i| a\rangle \Delta(e_i) \ (\forall a\in H).$$ If $\Lambda_1,\Lambda_2\neq \emptyset$, we can pick $i\in \Lambda_1$ and $j \in \Lambda_2$. The element $e= \frac{e_i+e_j}{\sqrt{2}}$ is a minimal tripotent in $H$, and hence $$\Delta (\lambda e) = \lambda \Delta(e) \ (\forall\lambda), \hbox{ or } \Delta (\lambda e) = \overline{\lambda} \Delta(e) \ (\forall\lambda).$$ In each one of the cases we have $$\frac{\lambda}{\sqrt{2}} \Delta(e_i) + \frac{\overline{\lambda}}{\sqrt{2}} \Delta(e_j) =\Delta (\lambda e)= \lambda \Delta(e) = \frac{\lambda}{\sqrt{2}} \Delta(e_i) + \frac{{\lambda}}{\sqrt{2}} \Delta(e_j)$$ or $$\frac{\lambda}{\sqrt{2}} \Delta(e_i) + \frac{\overline{\lambda}}{\sqrt{2}} \Delta(e_j) =\Delta (\lambda e)= \overline{\lambda} \Delta(e) = \frac{\overline{\lambda}}{\sqrt{2}} \Delta(e_i) + \frac{\overline{\lambda}}{\sqrt{2}} \Delta(e_j),$$ and both identities are impossible. Therefore, $\Lambda_2=\emptyset$ and thus $\Delta$ is a surjective linear isometric triple isomorphism, or $\Lambda_1=\emptyset$ and thus $\Delta$ is a surjective conjugate-linear isometric triple isomorphism.
\end{proof}

We are finally in a position to present our main result, which gives an affirmative answer to \cref{prob1} on atomic JBW$^*$-triples non-containing $1$-dimensional Cartan factor. We further show that $\Delta$ is automatically continuous and actually a real linear triple (isometric) isomorphism when both atomic JBW$^*$-triples contain no rank-one Cartan factors. Moreover, and the same conclusion holds if $\Delta$ is assumed to be continuous on each rank-one Cartan factor of the atomic JBW$^*$-triple in the  domain.

\begin{thrm}\label{t main thrm preservers of truncations of triple products}
Let $\Delta :A \to B$ be a {\rm(}non-necessarily linear nor continuous{\rm)} bijection preserving the truncation of triple products in both directions, where $A$ and $B$ are atomic JBW$^*$-triples non-containing $1$-dimensional Cartan factors. Assume additionally that the restriction of $\Delta$ to each rank-one Cartan factor in $A$, if any, is a continuous mapping.
Then $\Delta$ is an isometric real linear triple isomorphism. Furthermore, we can decompose $A$ as the direct $\ell_{\infty}$-sum of two (possibly trivial) weak$^*$-closed subtriples $A=A_1\oplus^{\infty} A_2$ such that $\Delta|_{A_1}$ is complex linear and $\Delta|_{A_2}$ is conjugate linear.  
\end{thrm}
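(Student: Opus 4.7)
The strategy is a three-stage reduction. \emph{Stage 1: Classify each Cartan factor as complex or conjugate linear.} By \cref{l Delta maps a factor onto a factor} there is a rank-preserving bijection $\sigma:\Gamma_1\to\Gamma_2$ with $\Delta(C_i)=D_{\sigma(i)}$. For each rank-one $C_i$, which is a Hilbert space of dimension $\geq 2$, the continuity hypothesis combined with \cref{p Hilbert spaces rank1CF} produces a complex linear or conjugate linear isometric triple isomorphism $T_i:=\Delta|_{C_i}:C_i\to D_{\sigma(i)}$. For each rank-$\geq 2$ factor $C_i$, \cref{p fe is idenity or conjugation and all coincide on eeach factor} shows that the map $f_e$ is the same at every minimal tripotent $e\in C_i$ and equals either $\operatorname{id}_{\CC}$ or the complex conjugation. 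The induced map $\Delta|_{\mathcal{U}(C_i)}$ preserves order and orthogonality in both directions (\cref{lem pres tripotents and leq,lem orthogonal tripotents}), and is complex or conjugate linear at every minimal tripotent; the Friedman--Peralta extension theorem \cite[Theorem 6.1]{Fried_Peralta_Ann_Math_Phys_2022}, applied directly or after replacing $D_{\sigma(i)}$ by its conjugate JBW$^*$-triple structure, yields a complex linear or conjugate linear isometric triple isomorphism $T_i:C_i\to D_{\sigma(i)}$ with $T_i=\Delta$ on every tripotent of $C_i$. The identity principle \cref{thm of extension T and Delta} then forces $\Delta|_{C_i}=T_i$ on all of $C_i$.

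\textbf{Stage 2: Global identification via a conjugate twist.} Partition $\Gamma_1=\Gamma_L\sqcup\Gamma_{CL}$ according to whether the common value of $f_e$ is $\operatorname{id}_{\CC}$ or $\overline{(\,\cdot\,)}$, set $A_1:=\bigoplus_{i\in\Gamma_L}^{\ell_\infty}C_i$ and $A_2:=\bigoplus_{i\in\Gamma_{CL}}^{\ell_\infty}C_i$, and assemble $T:=\bigoplus_i T_i:A=A_1\oplus^{\infty}A_2\to B$. By construction, $T$ is a surjective real linear isometry preserving triple products, complex linear on $A_1$ and conjugate linear on $A_2$. To upgrade the factor-wise identities to the global identity $\Delta=T$ on $A$, introduce the conjugate JBW$^*$-triple $\overline{A_2}$ (same underlying set and triple product as $A_2$, but reversed complex structure), and set $\widetilde A:=A_1\oplus^{\ell_\infty}\overline{A_2}$. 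As sets $\widetilde A=A$ and the two triple products coincide, so $\widetilde\Delta:=\Delta:\widetilde A\to B$ still preserves the truncation of triple products in both directions. A summand-wise check shows $\widetilde\Delta(\alpha\cdot_{\widetilde A}e)=\alpha\,\widetilde\Delta(e)$ for every minimal tripotent $e\in\widetilde A$ and $\alpha\in\CC$ (automatic on $A_1$; on $\overline{A_2}$ the reversal of the complex structure converts the identity $\Delta(\bar\alpha e)=\alpha\Delta(e)$ into complex linearity), while $T:\widetilde A\to B$ becomes a genuine complex linear triple isomorphism matching $\widetilde\Delta$ on every minimal tripotent. Applying \cref{thm of extension T and Delta} to the pair $(\widetilde\Delta,T)$ yields $\widetilde\Delta=T$, i.e.\ $\Delta=T$ on $A$, giving the decomposition claimed in the statement.

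\textbf{Main obstacle.} The technical crux is the passage from factor-wise linearity to a global real linear identification: \cref{thm of extension T and Delta} requires a \emph{uniform} complex (or conjugate) linearity at every minimal tripotent of the entire atomic domain, a hypothesis that fails on $A$ whenever both $\Gamma_L$ and $\Gamma_{CL}$ are non-empty, since $A$ then contains minimal tripotents on which $\Delta$ behaves $\CC$-linearly together with others on which it behaves conjugate-linearly. The conjugate-JBW$^*$-triple twist on $A_2$ absorbs the conjugate-linear summands into a reversed complex structure; on the resulting $\widetilde A$ the hypothesis holds uniformly, the identity principle applies, and the factor-wise information is promoted to the global real linear isometric triple isomorphism asserted in the theorem.
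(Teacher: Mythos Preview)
Your proposal is correct and follows essentially the same strategy as the paper: classify the Cartan factors via \cref{p Hilbert spaces rank1CF} and \cref{p fe is idenity or conjugation and all coincide on eeach factor}, use a conjugate-structure twist to make $\Delta$ uniformly complex-linear at minimal tripotents, invoke \cite[Theorem 6.1]{Fried_Peralta_Ann_Math_Phys_2022} to produce a triple isomorphism $T$ agreeing with $\Delta$ on tripotents, and conclude via the identity principle \cref{thm of extension T and Delta}. The differences are cosmetic: you twist the \emph{domain} ($\overline{A_2}$) while the paper twists the \emph{codomain} ($\overline{B_{cl}}$), and you apply the Friedman--Peralta theorem and the identity principle factor-by-factor before the global step, whereas the paper applies both once to the entire rank-$\geq 2$ part $A_l\oplus^{\infty}A_{cl}$ after the twist. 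Your factor-wise application of \cref{thm of extension T and Delta} in Stage~1 is harmless but redundant, since you reapply it globally in Stage~2 (which is where it is actually needed to handle elements with non-trivial components in several factors).
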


\begin{proof} We keep the notation we have been employing along this section. For each $i \in \Gamma_{1,rk1}$, $\Delta|_{C_{i}} : C_{i}\to D_{\sigma(i)}$ is a bijection preserving truncation of triple products in both directions between two rank-one Cartan factors (cf. \cref{l Delta maps a factor onto a factor}). \cref{p Hilbert spaces rank1CF} implies that $\Delta|_{C_{i}}$ is a linear or a conjugate-linear triple isomorphism, and this holds for all $i \in \Gamma_{1,rk1}$. This proves that $\Delta|_{\bigoplus_{i \in \Gamma_{1,rk1}} C_{i}}$ is a real linear triple isomorphism.\smallskip
	 
We set $A_{l} = \bigoplus\limits_{i\in \Gamma_{1,l}}^{\ell_{\infty}} C_i,$ $A_{cl} = \bigoplus\limits_{i\in \Gamma_{1,cl}}^{\ell_{\infty}} C_i$, $B_{l} = \bigoplus\limits_{i\in \Gamma_{1,l}}^{\ell_{\infty}} D_{\sigma(i)}$ and $B_{cl} = \bigoplus\limits_{i\in \Gamma_{1,cl}}^{\ell_{\infty}} D_{\sigma(i)}$. Clearly $\Delta\left(A_{l}\right) = B_{l}$ and $\Delta\left(A_{cl}\right) = B_{cl}$ (cf. \cref{l Delta maps a factor onto a factor}).\smallskip

Let $\overline{B_{cl}}$ denote the complex atomic JBW$^*$-triple obtained from $B_{cl}$ by replacing the complex structure with the conjugate one and keeping all the other operations. We shall write $\lambda \cdot {x} := \overline{\lambda} {x}$ ($\lambda\in \mathbb{C}$, ${x}\in  \overline{B_{cl}}$). Then the mapping $\Delta_1 : A_{l}\oplus^{\infty} A_{cl}\to B_{l}\oplus^{\infty} \overline{B_{cl}},$ $\Delta_1(a) = \Delta(a)$, is a bijection preserving the truncation of triple products in both directions, and thanks to the change in the complex structure of $\overline{B_{cl}}$, we have $\Delta_1 (\lambda e) = \lambda \cdot \Delta_1(e)$ for all complex $\lambda$ and all minimal tripotent $e\in A_{cl}$. Clearly, the same holds for the original structure of $A_{l}$ and $B_{l}$.\smallskip

By \cref{lem pres tripotents and leq} and  \cref{lem orthogonal tripotents} the restricted mapping $$\Delta_1|_{\mathcal{U}( A_{l}\oplus^{\infty} A_{cl})} : \mathcal{U}{( A_{l}\oplus^{\infty} A_{cl})} \to \mathcal{U}{(B_{l}\oplus^{\infty} \overline{B_{cl}})}$$ is a bijection preserving the partial ordering and orthogonality between tripotents in both directions. 
We also know that $\Delta_1 (\lambda e) = \lambda \cdot \Delta_1(e)$ (respectively, $\Delta_1 (\lambda e) = \lambda \Delta_1(e)$) for all complex $\lambda\in \mathbb{T}$ and all minimal tripotent $e\in A_{cl}$ (respectively, $e\in A_{l}$). By \cite[Theorem 6.1]{Fried_Peralta_Ann_Math_Phys_2022} there exists a linear triple isomorphism $T:  A_{l}\oplus^{\infty} A_{cl}\to  B_{l}\oplus^{\infty} \overline{B_{cl}}$ whose evaluation at every (minimal) tripotent in $ A_{l}\oplus^{\infty} A_{cl}$ coincides with that of $\Delta_1$.  The identity principle in \cref{thm of extension T and Delta} assures that $\Delta (a) = T(a)$ for all $a\in A =A_{l}\oplus^{\infty} A_{cl}$. We finally observe that when $T$ regarded as a mapping from $A_{l}\oplus^{\infty} A_{cl}$ onto $B_{l}\oplus^{\infty} B_{cl}$ it is only a real linear mapping and $\Delta (a) = T(a),$ for all $a\in A_{l}\oplus^{\infty} A_{cl}$, that is, $\Delta|_{A_{l}\oplus^{\infty} A_{cl}}$ is real linear.
\end{proof}\medskip

\textbf{Acknowledgements} J.J. Garcés supported by grant PID2021-122126NB-C31 funded by MICIU/AEI/10.13039/501100011033 and by ERDF/EU, and by Junta de Andalucía grant FQM375. 
Li supported by National Natural Science Foundation of China (Grant No. 12171251). A.M. Peralta supported by grant PID2021-122126NB-C31 funded by MICIU/AEI/10.13039/501100011033 and by ERDF/EU, by Junta de Andalucía grant FQM375, IMAG--Mar{\'i}a de Maeztu grant CEX2020-001105-M/AEI/10.13039/501100011033 and (MOST) Ministry of Science and Technology of China grant G2023125007L. S. Su supported by grant PID2021-122126NB-C31 funded by MICIU/AEI/10.13039/ 501100011033 and by China Scholarship Council\hyphenation{Council} Program (Grant No.202306740016).\smallskip

Part of this work was completed during a visit of A.M. Peralta to Nankai University and the Chern Institute of Mathematics, which he thanks for the hospitality.\medskip

\textbf{Disclosure:} All authors declare that they have no conflicts of interest to disclose.

%\bibliographystyle{plain}
%\bibliography{bibl042024.bib}

%\bibliographystyle{acm}

\end{document}